\documentclass[a4paper,11pt,english]{smfart}

\usepackage{amssymb}
\usepackage{url}
\usepackage[T1]{fontenc}
\RequirePackage{calrsfs}
\DeclareSymbolFont{rsfscript}{OMS}{rsfs}{m}{b}
\DeclareSymbolFontAlphabet{\mathrsfs}{rsfscript}
\usepackage[utopia,expert]{mathdesign}
\usepackage{lscape}
\usepackage{fancybox}
\usepackage{slashbox,booktabs,amsmath}
\usepackage{makecell}

\usepackage[leftbars]{changebar}

\usepackage{palatino}
\usepackage{rotating}
\usepackage{graphicx}

\usepackage{enumerate}

\usepackage{color}
\definecolor{shadecolor}{gray}{0.90}

\input xypic
\xyoption{all}
\xyoption{arc}

\makeindex


\newtheorem{theo}{Theorem}[section]

\newtheorem{prop}[theo]{Proposition}

\newtheorem{lem}[theo]{Lemma}

\newtheorem{coro}[theo]{Corollary}

\def\equat{\refstepcounter{theo}\begin{equation}}
\def\endequat{\end{equation}}


\def\AG{{\mathfrak A}}  \def\aG{{\mathfrak a}}  
    
\def\CG{{\mathfrak C}}    \def\CM{{\mathbb{C}}}
    
  \def\eG{{\mathfrak e}}

\def\IG{{\mathfrak I}}  \def\iG{{\mathfrak i}}

\def\LG{{\mathfrak L}}  \def\lG{{\mathfrak l}}  
  \def\mG{{\mathfrak m}}  
    \def\NM{{\mathbb{N}}}
    
  \def\pG{{\mathfrak p}}  
\def\QG{{\mathfrak Q}}  \def\qG{{\mathfrak q}}  
\def\RG{{\mathfrak R}}  \def\rG{{\mathfrak r}}  \def\RM{{\mathbb{R}}}
\def\SG{{\mathfrak S}}  \def\sG{{\mathfrak s}}  
    
  \def\uG{{\mathfrak u}}

    \def\ZM{{\mathbb{Z}}}



  \def\ab{{\mathbf a}}

  \def\eb{{\mathbf e}}  \def\EC{{\mathcal{E}}}
    \def\FC{{\mathcal{F}}}
\def\Gb{{\mathbf G}}    
\def\Hb{{\mathbf H}}

\def\Lb{{\mathbf L}}    \def\LC{{\mathcal{L}}}
    
    \def\NC{{\mathcal{N}}}

  \def\tb{{\mathbf t}}  
  \def\ub{{\mathbf u}}


\def\Crm{{\mathrm{C}}}    \def\CCB{{\boldsymbol{\mathcal{C}}}}

\def\Mrm{{\mathrm{M}}}    
\def\Nrm{{\mathrm{N}}}

\def\Trm{{\mathrm{T}}}

\def\Zrm{{\mathrm{Z}}}    \def\ZCB{{\boldsymbol{\mathcal{Z}}}}





\def\Gba{{\bar{G}}}          
          
\def\Iba{{\bar{I}}}

\def\Wba{{\bar{W}}}






\def\a{\alpha}

\def\g{\gamma}
\def\G{\Gamma}

\def\D{\Delta}
\def\e{\varepsilon}
\def\ph{\varphi}
\def\l{\lambda}

\def\O{\Omega}
\def\r{\rho}
\def\s{\sigma}

\def\t{\tau}

\def\z{\zeta}


\def\mub{{\boldsymbol{\mu}}}






\DeclareMathOperator{\Ind}{{\mathrm{Ind}}}

\DeclareMathOperator{\Irr}{{\mathrm{Irr}}}
\DeclareMathOperator{\Ker}{{\mathrm{Ker}}}

\DeclareMathOperator{\Mat}{{\mathrm{Mat}}}

\DeclareMathOperator{\rad}{{\mathrm{rad}}}

\DeclareMathOperator{\Res}{{\mathrm{Res}}}

\DeclareMathOperator{\Tr}{{\mathrm{Tr}}}


\def\to{\rightarrow}
\def\longto{\longrightarrow}


\def\DS{\displaystyle}
\def\SS{\scriptstyle}
\def\SSS{\scriptscriptstyle}

\def\finl{~$\blacksquare$}

\def\lexp#1#2{\kern\scriptspace\vphantom{#2}^{#1}\kern-\scriptspace#2}
\def\le{\hspace{0.1em}\mathop{\leqslant}\nolimits\hspace{0.1em}}
\def\ge{\hspace{0.1em}\mathop{\geqslant}\nolimits\hspace{0.1em}}

\mathchardef\inferieur="321E
\mathchardef\superieur="321F

\def\eqna{\begin{eqnarray*}}
\def\endeqna{\end{eqnarray*}}

\def\itemth#1{\item[${\mathrm{(#1)}}$]}

\catcode`\@=11
\long\def\@car#1#2\@nil{#1}
\long\def\@first#1#2{#1}
\long\def\@second#1#2{#2}
\long\def\ifempty#1{\expandafter\ifx\@car#1@\@nil @\@empty
  \expandafter\@first\else\expandafter\@second\fi}
\catcode`\@=12

%


\DeclareMathOperator{\Ref}{Ref}

\def\boitegrise#1#2{\begin{centerline}{\fcolorbox{black}{shadecolor}{~
    \begin{minipage}[t]{#2}{\vphantom{~}#1\vphantom{$A_{\DS{A_A}}$}}
            \end{minipage}~}}\end{centerline}\medskip}

\def\ve{{\SSS{\vee}}}

\def\surto{\twoheadrightarrow}

\theoremstyle{remark}
\newtheorem{rema}[theo]{Remark}
\newtheorem{notation}{Notation}

\theoremstyle{plain}

\def\Frac{{\mathrm{Frac}}}

\def\BIL{LR}
\def\GAUCHE{L}
\def\CAR{CAR}
\def\FAM{FAM}

\def\euler{{\eb\ub}}
\def\eulerq{{\mathrm{eu}}}

\def\calo{{\Crm\Mrm}}

\def\xyinj{\ar@{^{(}->}}
\def\xysur{\ar@{->>}}

\def\bigrad{{\mathrm{bigr}}}

\DeclareMathOperator{\carac}{{\mathrm{Car}}}
\bigskip\def\unb{{\boldsymbol{1}}}

\def\petitespace{\vphantom{$\DS{\frac{\DS{A^A}}{\DS{A_A}}}$}}

\def\mult{{\mathrm{mult}}}

\makeatletter
\def\hlinewd#1{%
\noalign{\ifnum0=`}\fi\hrule \@height #1 %
\futurelet\reserved@a\@xhline}
\makeatother

\newlength\epaisLigne

\usepackage{multirow}

\def\dotcup{\hskip1mm\dot{\cup}\hskip1mm}

\newcommand{\longiso}{\stackrel{\sim}{\longrightarrow}}

\def\carac{{\mathrm{car}}}


\makeatletter
\def\hlinewd#1{%
\noalign{\ifnum0=`}\fi\hrule \@height #1 %
\futurelet\reserved@a\@xhline}
\makeatother

\usepackage{multirow}

\addtolength{\hoffset}{-1cm}\addtolength{\textwidth}{2cm}\addtolength{\voffset}{-1.5cm}\addtolength{\textheight}{3cm}

\def\petitespace{\vphantom{$\DS{\frac{\DS{A^A}}{\DS{A_A}}}$}}

\usepackage{lscape}

\def\troncation{{\mathrm{Trunc}}}

\newcommand{\longsurto}{\relbar\joinrel\twoheadrightarrow}
\newcommand{\longinjto}{\lhook\joinrel\longrightarrow}

\begin{document}


\title{On the Calogero-Moser space associated \\ with dihedral groups}

\author{{\sc C\'edric Bonnaf\'e}}
\address{
Institut Montpelli\'erain Alexander Grothendieck (CNRS: UMR 5149), 
Universit\'e Montpellier 2,
Case Courrier 051,
Place Eug\`ene Bataillon,
34095 MONTPELLIER Cedex,
FRANCE} 

\makeatletter
\email{cedric.bonnafe@univ-montp2.fr}
\makeatother

\date{\today}

\thanks{The author is partly supported by the ANR 
(Project No ANR-16-CE40-0010-01 GeRepMod).}

\maketitle
\pagestyle{myheadings}

\markboth{\sc C. Bonnaf\'e}{\sc Calogero-Moser space associated with dihedral groups}


Using the geometry of the associated Calogero-Moser space, 
R. Rouquier and the author~\cite{calogero} have attached to any 
finite complex reflection group $W$ 
several notions ({\it Calogero-Moser left, right or two-sided cells}, 
{\it Calogero-Moser cellular characters}), completing the notion of {\it Calogero-Moser families} 
defined by Gordon~\cite{gordon}. If moreover $W$ is a Coxeter group, it is conjectured 
in~\cite[Chapter~15]{calogero} that these notions coincide with the analogous notions defined 
using the Hecke algebra by Kazhdan and Lusztig (or Lusztig in the unequal 
parameters case). 

\def\itembul{\item[$\bullet$]}

In the present paper, we aim to investigate these conjectures 
whenever $W$ is a {\it dihedral group}. Since they are all about the geometry 
of the Calogero-Moser space, we also study some conjectures in~\cite[Chapter~16]{calogero} 
about the fixed point subvariety under the action of a group of roots 
of unity, as well as some other aspects (presentation of the algebra of regular 
functions; cuspidal points as defined by Bellamy~\cite{bellamy cuspidal} and their 
associated Lie algebra). We do not prove all the conjectures but we get at least 
the following results (here, $W$ is a dihedral group of order $2d$, acting on a complex 
vector space $V$ of dimension $2$; we denote by $\ZCB$ its associated Calogero-Moser space 
as in~\cite{calogero} and by $Z$ the algebra of regular functions on $\ZCB$: 
see~\S\ref{sub:calogero} for the definition):
\begin{itemize}
\itembul Calogero-Moser cellular characters and Kazhdan-Lusztig cellular characters 
coincide (this is~\cite[Conjecture~\CAR]{calogero}).

\itembul Calogero-Moser families and Kazhdan-Lusztig families coincide 
(this is~\cite[Conjecture~1.3]{gordon martino}; see also~\cite[Conjecture~FAM]{calogero}). 
This result is not new: it was already proved by Bellamy~\cite{bellamy these}, but we propose 
a slightly different proof, based on the computation of cellular characters.

\itembul We give a presentation of $\CM[V \times V^*]^W$ (this extends 
the results of~\cite{alev} which deal with the case where $d \in \{3,4,6\}$). 
Using~\cite{bonnafe thiel}, we explain how one could derive from this a presentation 
of $Z$: this is done completely only for $d \in \{3,4,6\}$.

\itembul If $d$ is odd, then Calogero-Moser (left, right or two-sided) cells 
coincide with the Kazhdan-Lusztig (left, right or two-sided) cells: this is 
a particular case of~\cite[Conjectures~L~and~LR]{calogero}. For proving this fact, 
we prove that the Galois group defined in~\cite[Chapter~5]{calogero} is equal 
to the symmetric group $\SG_W$ on the set $W$. 

\itembul If $d$ is odd, then we prove~\cite[Conjecture~FIX]{calogero} about the 
fixed point subvariety $\ZCB^{\mub_d}$.
\end{itemize}
We also investigate special cases using calculations with the 
software~ {\tt MAGMA}~\cite{magma}, based on the~{\tt MAGMA} package~{\tt CHAMP} 
developed by Thiel~\cite{thiel}, and a paper in preparation by Thiel and the author~\cite{bonnafe thiel}. 
For instance, we get:
\begin{itemize}
\itembul If $d \in \{3,4,6\}$, then we prove~\cite[Conjecture~FIX]{calogero} 
about the fixed point subvariety $\ZCB^{\mub_m}$ (for any $m$).

\itembul If $d=4$ and the parameters are equal and non-zero 
(respectively $d=6$ and the parameters are generic) and if $\mG$ is a Poisson maximal ideal of $Z$, then 
we prove that the Lie algebra $\mG/\mG^2$ is isomorphic to $\sG\lG_3(\CM)$ 
(respectively $\sG\pG_4(\CM)$). We believe these intriguing examples have their own interest. 
\end{itemize}

\def\carac{{\mathrm{char}}}
\def\rad{{\mathrm{rad}}}
\def\semi{{\mathrm{sem}}}

\bigskip

\noindent{\bf Notation.--- } We set $V=\CM^2$ and we denote by $(x,y)$ the canonical basis of $V$ 
and by $(X,Y)$ the dual basis of $V^*$. We identify $\Gb\Lb_\CM(V)$ with $\Gb\Lb_2(\CM)$. 
We also fix a non-zero natural number $d$, as well as a primitive 
$d$-th root of unity $\z \in \CM^\times$. If $i \in \ZM/d\ZM$, we denote by $\z^i$ 
the element $\z^{i_0}$, where $i_0$ is any representative of $i$ in $\ZM$.

We denote by $\CM[V]$ the algebra of polynomial functions on $V$ (so 
that $\CM[V]=\CM[X,Y]$ is a polynomial ring in two variables) and by $\CM(V)$ its fraction 
field (so that $\CM(V)=\CM(X,Y)$). We will denote by $\otimes$ the tensor product $\otimes_\CM$.

\bigskip

\section{The dihedral group}\label{chapitre:diedral}

\medskip

\subsection{Generators}\label{sub:diedral}
If $i \in \ZM$, we set
$$s_i=\begin{pmatrix} 0 & \z^i \\ \z^{-i} & 0 \end{pmatrix}\qquad\text{and}\qquad
\begin{cases}
s=s_0,\\
t=s_1.\\
\end{cases}
\leqno{\text{and}}$$
Note that $s_i=s_{i+d}$ is a reflection of order $2$ for all $i \in \ZM$ (so that 
we can write $s_i$ for $i \in \ZM/d\ZM$). We set
$$W=\langle s,t \rangle.$$
Then $W$ is a dihedral group of order $2d$, and $(W,\{s,t\})$ is a Coxeter system, 
where
$$s^2=t^2=(st)^d=1.$$
If we need to emphasize the natural number $d$, we will denote by $W_d$ the group $W$. 

\bigskip
%
%

We set 
$$c=ts=\begin{pmatrix} \z & 0 \\ 0 & \z^{-1} \end{pmatrix},$$
so that the following equalities are easily checked (for all $i$, $j \in \ZM$)
\equat\label{eq:c}
c s_i c^{-1} = s_{i+2}\qquad\text{and}\qquad s_is_j = c^{i-j}.
\endequat
It then follows that 
\equat\label{eq:s-and-t}
\text{\it $s$ and $t$ are conjugate in $W$ if and only if $d$ is odd.}
\endequat
Note that
\equat\label{eq:W}
W=\{c^i~|~i \in \ZM/d\ZM\} \dotcup \{s_i ~|~i \in \ZM/d\ZM\}.
\endequat
The set $\Ref(W)$ of reflections of $W$ is equal to 
\equat\label{eq:ref}
\Ref(W)=\{s_i~|~i \in \ZM/d\ZM\}.
\endequat
Now, let
$$\a_i^\ve = \z^i x-y\qquad\text{and}\qquad \a_i = X - \z^i Y,$$
so that
\equat\label{eq:racines}
s_i(\a_i^\ve)=-\a_i^\ve\qquad\text{and}\qquad s_i(\a_i)= -\a_i.
\endequat
Finally, we fix a primitive $2d$-th root $\xi$ such that $\xi^2=\z$ and we set
$$\t=\begin{pmatrix} 0 & \xi \\ \xi^{-1} & 0 \end{pmatrix}.$$
Then it is readily seen that
\equat\label{eq:tau}
\t s \t^{-1} = t, \qquad \t t \t^{-1} = s\qquad\text{and}\qquad \t^2=1,
\endequat
so that $\t \in \Nrm_{\Gb\Lb_\CM(V)}(W)$. 

\bigskip

\begin{rema}\label{rem:tau}
If $d=2e-1$ is odd, then $\xi=-\z^e$ and so $\t=-s_e$  
induces an inner automorphism of $W$ (the conjugacy by $s_e$). 
If $d$ is even, then $s$ and $t$ 
are not conjugate in $W$ and so $\t$ induces a non-inner automorphism 
of $W$.\finl
\end{rema}

\bigskip

\subsection{Irreducible characters}
We denote by $\unb_W$ the trivial character of $W$ and let $\e : W \to \CM^\times$, $w \mapsto \det(w)$. 
If $d$ is even, then there exist two other linear characters $\e_s$ and $\e_t$ which 
are characterized by the following properties:
$$
\begin{cases}
\e_s(s)=\e_t(t)=-1,\\
\e_s(t)=\e_t(s)=1.
\end{cases}
$$
If $k \in \ZM$, we set 
$$\begin{array}{rccc}
\r_k : & W & \longto & \Gb\Lb_2(\CM)  \\
& s_i & \longmapsto & s_{ki} \\
& c^i & \longmapsto & c^{ki}.
\end{array}
$$
It is easily checked from~(\ref{eq:c}) that $\r_k$ is a morphism of groups (that is, 
a representation of $W$). If $R$ is any $\CM$-algebra, we still denote by 
$\r_k : RW \to \Mat_2(R)$ the morphism of algebras induced by $\r_k$. 
The character afforded by $\r_k$ is denoted by $\chi_k$. The following 
proposition is well-known:

\bigskip

\begin{prop}\label{prop:irr}
Let $k \in \ZM$. Then:
\begin{itemize}
\itemth{a} $\chi_k=\chi_{-k}=\chi_{k+d}$.

\itemth{b} If $d$ is odd and $k \not\equiv 0 \mod d$, then $\chi_k$ is irreducible. 

\itemth{c} If $d$ is even and $k \not\equiv 0$ or $d/2 \mod d$, then $\chi_k$ is irreducible. 

\itemth{d} $\chi_0=\unb_W + \e$ and, if $d$ is even, then $\chi_{d/2}=\e_s + \e_t$.
\end{itemize}
\end{prop}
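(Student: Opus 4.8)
The plan is to prove everything by direct computation with the explicit matrices defining $\r_k$, namely $\r_k(c^i)=\diag(\z^{ki},\z^{-ki})$ and $\r_k(s_i)=s_{ki}$. From these one reads off the character values $\chi_k(c^i)=\z^{ki}+\z^{-ki}$ and $\chi_k(s_i)=0$ for all $i$ (the latter since $s_{ki}$ is anti-diagonal). Part~(a) is then immediate: $\chi_{k+d}(c^i)=\z^{(k+d)i}+\z^{-(k+d)i}=\chi_k(c^i)$ because $\z^d=1$, and $\chi_{-k}=\chi_k$ since $\z^{ki}+\z^{-ki}$ is invariant under $k\mapsto -k$. (Equivalently, one has $\r_{k+d}=\r_k$ on the nose, using $s_j=s_{j+d}$, and $\r_{-k}$ is conjugate to $\r_k$ by $s_0$, which turns $\diag(\z^{ki},\z^{-ki})$ into $\diag(\z^{-ki},\z^{ki})$ and $s_{ki}$ into $s_{-ki}$.)

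For parts~(b) and~(c), since $\r_k$ is two-dimensional it is enough to show it has no $W$-stable line. Restricting to the cyclic subgroup $\langle c\rangle=\{c^i\mid i\in\ZM/d\ZM\}$, the operator $\r_k(c)=\diag(\z^k,\z^{-k})$ has, as soon as $\z^k\neq\z^{-k}$, exactly two eigenlines, $\CM x$ and $\CM y$; these are therefore the only $\langle c\rangle$-stable lines. Now $\z^k=\z^{-k}$ means $d\mid 2k$, which for $d$ odd is $k\equiv 0\pmod d$ and for $d$ even is $k\equiv 0$ or $d/2\pmod d$; under the hypotheses of~(b), resp.~(c), this is excluded, so the only candidates for a $W$-stable line are $\CM x$ and $\CM y$. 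But $\r_k(s)=\r_k(s_0)=s_0$ interchanges $x$ and $y$, hence interchanges these two lines, so no line is $W$-stable and $\chi_k$ is irreducible.

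For part~(d): when $k=0$, $\r_0(c^i)=\id$ and $\r_0(s_i)=s_0$ for all $i$, so $\r_0$ factors through $W/\langle c\rangle$ and splits as $\CM(x+y)\oplus\CM(x-y)$; on $\CM(x+y)$ all of $W$ acts trivially, giving $\unb_W$, while on $\CM(x-y)$ the $c^i$ act trivially and the $s_i$ act by $-1$, which is exactly $\e$ since $\e(c^i)=1$ and $\e(s_i)=-1$. When $d$ is even and $k=d/2$ one has $\z^{d/2}=-1$, hence $\r_{d/2}(c^i)=(-1)^i\,\id$ and $\r_{d/2}(s_i)=(-1)^i s_0$; decomposing again along $x\pm y$, the line $\CM(x-y)$ carries the character with $s\mapsto -1$, $t\mapsto 1$, $c\mapsto -1$, i.e.\ $\e_s$, and $\CM(x+y)$ carries $s\mapsto 1$, $t\mapsto -1$, $c\mapsto -1$, i.e.\ $\e_t$, so $\chi_{d/2}=\e_s+\e_t$.

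There is no genuine obstacle here; the argument is elementary linear algebra. The only points that deserve a little care are the index arithmetic modulo $d$ in~(a) (and in translating $d\mid 2k$ into the stated congruences according to the parity of $d$), the elementary fact that a diagonal matrix with distinct diagonal entries has exactly the two coordinate axes as invariant lines (used to pin down the candidates in~(b)--(c)), and identifying the one-dimensional constituents in~(d) with $\unb_W$, $\e$, $\e_s$, $\e_t$ by evaluating on the Coxeter generators $s,t$ (equivalently on $c=ts$).
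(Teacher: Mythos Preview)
Your proof is correct. The paper itself states this proposition as ``well-known'' and provides no proof, so there is nothing to compare against; your argument is precisely the standard elementary one (character values plus the absence of a $W$-stable line), and all the details you flag---the congruence translation of $d\mid 2k$, the identification of the one-dimensional constituents with $\unb_W,\e,\e_s,\e_t$---are handled correctly.
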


\bigskip

\begin{coro}\label{coro:irr}
Recall that $\t$ is the element of $\Nrm_{\Gb\Lb_\CM(V)}(W)$ defined in~\S\ref{sub:diedral}. 
\begin{itemize}
\itemth{a} If $d$ is odd, then $|\Irr(W)|=(d+3)/2$ and 
$$\Irr(W)=\{\unb_W,\e\} \dotcup \{\chi_k~|~1 \le k \le (d-1)/2\}.$$
Moreover, $\t$ acts trivially on $\Irr(W)$.
\itemth{b} If $d$ is even, then $|\Irr(W)|=(d+6)/2$ and 
$$\Irr(W)=\{\unb_W,\e,\e_s,\e_t\} \dotcup \{\chi_k~|~1 \le k \le (d-2)/2\}.$$
Moreover, $\lexp{\t}{\chi}=\chi$ if $\chi \in \Irr(W) \setminus \{\e_s,\e_t\}$ 
while $\lexp{\t}{\e_s}=\e_t$.
\end{itemize}
\end{coro}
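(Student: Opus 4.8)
The plan is to derive the whole statement from Proposition~\ref{prop:irr} and the classical identity $\sum_{\chi \in \Irr(W)} \chi(1)^2 = |W| = 2d$.

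First I would assemble the list of candidate irreducible characters. By Proposition~\ref{prop:irr}(b) (resp.\ (c)), the characters $\chi_k$ are irreducible of degree $2$ for $1 \le k \le (d-1)/2$ when $d$ is odd (resp.\ for $1 \le k \le (d-2)/2$ when $d$ is even), and Proposition~\ref{prop:irr}(d) together with the definitions identifies the remaining candidates as the linear characters $\unb_W,\e$ (and, when $d$ is even, $\e_s$ and $\e_t$). Next I would check that these characters are pairwise distinct: the two-dimensional $\chi_k$ cannot coincide with a linear character for degree reasons; the linear characters are separated by their values at $s$ and $t$; and if $\chi_k=\chi_l$ with $k,l$ in the prescribed range, then Proposition~\ref{prop:irr}(a) forces $k\equiv\pm l\pmod d$, which, since $2\le k+l\le d-1$ (resp.\ $\le d-2$), leaves only $k=l$. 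Adding up the squares of the degrees gives $2\cdot 1+\tfrac{d-1}{2}\cdot 4 = 2d$ when $d$ is odd and $4\cdot 1+\tfrac{d-2}{2}\cdot 4 = 2d$ when $d$ is even; as this equals $|W|$, the list exhausts $\Irr(W)$, and the stated values of $|\Irr(W)|$ follow at once.

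It then remains to compute the action of $\t$. When $d$ is odd, Remark~\ref{rem:tau} says that $\t$ induces an inner automorphism of $W$, so it acts trivially on $\Irr(W)$. When $d$ is even, I would use the relations $\t s\t^{-1}=t$ and $\t t\t^{-1}=s$ from~(\ref{eq:tau}): they give $\lexp{\t}{\e_s}=\e_t$ and $\lexp{\t}{\e_t}=\e_s$ directly, and they fix $\unb_W$ and $\e$ since $\e(s)=\e(t)=-1$. For the $\chi_k$ I would observe that $\t$ conjugates $c=ts$ to $st=c^{-1}$ and sends every reflection of $W$ to a reflection, while by the definition of $\r_k$ one has $\chi_k(c^i)=\z^{ki}+\z^{-ki}$ and $\chi_k(s_i)=0$; hence $\lexp{\t}{\chi_k}$ and $\chi_k$ agree on all of $W$ by~(\ref{eq:W}), so $\lexp{\t}{\chi_k}=\chi_k$.

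I do not anticipate any real difficulty. The only points needing care are the bookkeeping in the distinctness argument --- one must keep $k$ strictly below $d/2$, which is precisely the range supplied by Proposition~\ref{prop:irr} --- and, in the even case, the verification that $\t$ fixes each $\chi_k$; here the clean way to phrase it is that $\chi_k$ vanishes off $\langle c\rangle$ and is invariant under $c\mapsto c^{-1}$, both of which are preserved by conjugation by $\t$.
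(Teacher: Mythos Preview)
Your proof is correct and is precisely the standard deduction from Proposition~\ref{prop:irr} that the paper has in mind; the paper in fact offers no separate proof of this corollary. One small precision: Proposition~\ref{prop:irr}(a) only asserts $\chi_k=\chi_{-k}=\chi_{k+d}$, not its converse, so to conclude that $\chi_k=\chi_l$ forces $k\equiv\pm l\pmod d$ you should instead evaluate at $c$, where $\chi_k(c)=\z^k+\z^{-k}$ separates the $\chi_k$ in the prescribed range.
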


\bigskip

\subsection{Some fractions in two variables}
We work in the fraction field $\CM(V)=\CM(X,Y)$. If $1 \le k \le d$, then 
\equat\label{eq:impair-1}
\sum_{i \in \ZM/d\ZM} \frac{\z^{ki}}{X - \z^i}=\frac{dX^{k-1}}{X^d-1}.
\endequat
%
%
%
\equat\label{eq:impair-2}
\sum_{i \in \ZM/d\ZM} \frac{\z^{ki}}{X - \z^i Y}=\frac{dX^{k-1}Y^{d-k}}{X^d-Y^d}.
\endequat
\begin{proof}
Let us first prove~(\ref{eq:impair-1}). 
Since $1 \le k \le d$, there exist complex numbers $(\xi_i)_{i \in \ZM/d\ZM}$ such that 
$$\frac{dX^{k-1}}{X^d-1} = \sum_{i \in \ZM/d\ZM} \frac{\xi_i}{X-\z^i}.$$
Then
$$\xi_i = \lim_{z \to \z^i} \frac{dz^{k-1}(z-\z^i)}{z^d-1} = d \z^{(k-1)i} 
\prod_{\substack{\SS{j \in \ZM/d\ZM}\\ \SS{j \neq i}}} (\z^i-\z^j)^{-1}= 
d \z^{(k-1)i-(d-1)i} \prod_{j=1}^{d-1} (1-\z^j)^{-1} = \z^{ki},$$
and~(\ref{eq:impair-1}) is proved.

\medskip

Now,~(\ref{eq:impair-2}) follows easily from~(\ref{eq:impair-1}) by replacing $X$ by $X/Y$.
\end{proof}

%

\bigskip

If $d=2e$ is even and $1 \le k \le e$, then
\equat\label{eq:pair-1}
\sum_{i \in \ZM/e\ZM} \frac{\z^{2ki}}{X-\z^{2i}Y} = \frac{eX^{k-1}Y^{e-k}}{X^e-Y^e},
\endequat
\equat\label{eq:pair-2}
\sum_{i \in \ZM/e\ZM} \frac{\z^{k(2i+1)}}{X-\z^{2i+1}Y} = -\frac{eX^{k-1}Y^{e-k}}{X^e+Y^e},
\endequat
\equat\label{eq:pair-3}
\sum_{i \in \ZM/e\ZM} \frac{\z^{-(k-1)(2i+1)}}{X-\z^{2i+1}Y} = \frac{eX^{e-k}Y^{k-1}}{X^e+Y^e},
\endequat
\begin{proof}
The equality~(\ref{eq:pair-1}) follows from~(\ref{eq:impair-1}) by replacing $\z$ by $\z^2$ 
and $d$ by $e$. The equality~(\ref{eq:pair-2}) follows from~(\ref{eq:pair-1}) 
by replacing $Y$ by $\z Y$ (note that $\z^e=-1$). Finally, the equality~(\ref{eq:pair-3}) 
follows from~(\ref{eq:pair-2}) by replacing $k$ by $e-k+1$ 
(note that $\z^{-(k-1)(2i+1)}=-(\z^e)^{2i+1}\z^{-(k-1)(2i+1)}= - \z^{(e-k+1)(2i+1)}$). 
\end{proof}

\section{Invariants}

\bigskip

The aim of this section is to describe generators and relations for 
the invariant algebra $\CM[V \times V^*]^W$. Note that such results have been obtained 
if $d \in \{3,4,6\}$ in~\cite{alev}. We set
$$q=xy,\qquad r=x^d+y^d,\qquad Q=XY\qquad \text{et}\qquad R=X^d+Y^d.$$
Then
$$\CM[V]^W=\CM[Q,R]\qquad\text{and}\qquad \CM[V^*]^W=\CM[q,r].$$
We set $P_\bullet=\CM[V]^W \otimes \CM[V^*]^W = \CM[q,r,Q,R] \subset \CM[V \times V^*]^W$. 
If $0 \le i \le d$, we set 
$$\ab_{i,0}=x^{d-i}Y^i + y^{d-i} X^{i}.$$
Note that
$$\ab_{0,0}=r\qquad\text{and}\qquad \ab_{d,0} = R.$$
Finally, let 
$$\euler_0=xX+yY.$$
Then $\ab_{i,0}$, $\euler_0 \in \CM[V \times V^*]^W$. 

We will now describe some relations between these invariants. For this, let 
$\euler_0^{(i)}=(xX)^i+ (yY)^i$. 
Then the $\euler_0^{(i)}$'s belong also to $\CM[V \times V^*]^W$. 
As they will appear in relations between generators of $\CM[V \times V^*]^W$, 
we must explain how to express them as polynomials in $\euler_0$. Firt of all, 
\eqna
\euler_0^i&=&\DS{\sum_{j=0}^i {i \choose j} (xX)^j(yY)^{i-j}} \\
&=& \DS{\sum_{0 \le j < \frac{\SS{i}}{\SS{2}}} {i \choose j} (qQ)^j\bigl((xX)^{i-2j} + (yY)^{i-2j}\bigr)
+}
\begin{cases}
0 & \text{if $i$ is odd,}\\
\DS{{i \choose \frac{\SS{i}}{\SS{2}}}}(qQ)^{i/2} & \text{if $i$ is even.}
\end{cases}
\endeqna
Therefore,
$$\euler_0^i=\sum_{0 \le j < \frac{{i}}{{2}}} {i \choose j} (qQ)^j\,\euler_0^{(i-2j)}
+
\begin{cases}
0 & \text{if $i$ is odd,}\\
\DS{{i \choose \frac{\SS{i}}{\SS{2}}}} (qQ)^{i/2} & \text{if $i$ is even.}
\end{cases}
$$
So, by triangularity of this formula, an easy induction shows that there 
exists a family of integers 
$(n_{i,j})_{0 \le j \le i/2}$ such that 
\equat\label{eq:changement-base-euler-diedral}
\euler_0^{(i)}=\sum_{0 \le j \le \frac{{i}}{{2}}} n_{i,j} (qQ)^j \euler_0^{i-2j},
\endequat
with $n_{i,0}=1$ for all $i$. 

On the other hand, 
one can check that the following relations hold (for $1 \le i \le j \le d-1$):
$$\begin{cases}
(\Zrm_i^0) \quad \euler_0 \ab_{i,0} = q \ab_{i+1,0} + Q \ab_{i-1,0} \\
(\Zrm_{i,j}^0) \quad \ab_{i,0} \ab_{j,0} =  q^{d-j} Q^i \euler_0^{(j-i)} +
\begin{cases}
r \ab_{i+j,0}-q^{d-i-j} \,\euler_0^{(i+j)} & \text{if $2 \le i+j \le d$},\\
r R - \euler_0^{(d)} & \text{if $i+j=d$},\\
R \ab_{i+j-d,0} - Q^{i+j-d} \euler_0^{(2d-i-j)} & \text{if $d \le i+j \le 2d-2$}.
\end{cases}\\
\end{cases}$$
Using~(\ref{eq:changement-base-euler-diedral}), these last relations can be viewed as relations 
between $q$, $r$, $Q$, $R$, $\euler_0$, $\ab_{1,0}$, $\ab_{2,0}$,\dots, $\ab_{d-1,0}$.

\bigskip

\begin{theo}\label{theo:diedral-invariants}
The algebra of invariants $\CM[V \times V^*]^W$ admits the following presentation:
$$
\begin{cases}
\text{\it Generators:} & q,r,Q,R,\euler_0,\ab_{1,0},\ab_{2,0},\dots,\ab_{d-1,0} \\
\text{\it Relations:} & 
\begin{cases}
(\Zrm_i^0) & \text{\it for $1 \le i \le d-1$,}\\
(\Zrm_{i,j}^0) & \text{\it for $1 \le i \le j \le d-1$.}\\
\end{cases}
\end{cases}
$$
This presentation is minimal, as well by the number of generators as by the number of relations 
(there are $d+4$ generators and $(d+2)(d-1)/2$ relations). Moreover,
$$
\CM[V\times V^*]^W = P_\bullet \oplus 
P_\bullet \,\euler_0 \oplus P_\bullet \,\euler_0^2 \oplus \cdots \oplus P_\bullet \,\euler_0^d 
\oplus P_\bullet \,\ab_{1,0} \oplus P_\bullet \,\ab_{2,0} \oplus \cdots \oplus P_\bullet \,\ab_{d-1,0}.
$$
\end{theo}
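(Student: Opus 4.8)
The plan is to pin down first the structure of $\CM[V\times V^*]^W$ as a $P_\bullet$-module, and to read off the presentation and the minimality statements from it. I would decompose $\CM[V\times V^*]^W$ along $W$-isotypic components; since every irreducible character of $W$ is self-dual (Proposition~\ref{prop:irr}(a), and $\e,\e_s,\e_t$ are real-valued), there is a $P_\bullet$-module isomorphism
$$\CM[V\times V^*]^W\;\cong\;\bigoplus_{\chi\in\Irr(W)}\Hom_W(\chi,\CM[V^*])\otimes_\CM\Hom_W(\chi,\CM[V]),$$
where $\Hom_W(\chi,\CM[V^*])$ is free of rank $\dim\chi$ over $\CM[V^*]^W=\CM[q,r]$ and $\Hom_W(\chi,\CM[V])$ free of rank $\dim\chi$ over $\CM[V]^W=\CM[Q,R]$, with homogeneous generators in the fake degrees of $\chi$. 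The crux is to write these free generators explicitly: $1$ for $\unb_W$; $x^d-y^d$ (resp. $X^d-Y^d$) for $\e$; the elementary $\chi_k$-submodules $\CM x^k\oplus\CM y^k$ and $\CM x^{d-k}\oplus\CM y^{d-k}$ (resp. with $X,Y$) for $\chi_k$; and, for $d$ even, $x^{d/2}\mp y^{d/2}$ (resp. $X^{d/2}\mp Y^{d/2}$) for $\e_s$ and $\e_t$ — this is exactly where the partial-fraction identities of \S1.C are used, to check that these really are free generators. Computing the tensor products and the one-dimensional $W$-invariant in each $\chi\otimes\chi$, one finds the $\chi_k$-summand is $P_\bullet\euler_0^{(k)}\oplus P_\bullet\ab_{k,0}\oplus P_\bullet\ab_{d-k,0}\oplus P_\bullet\euler_0^{(d-k)}$ (with $\euler_0^{(j)}=(xX)^j+(yY)^j$), the $\unb_W$-summand is $P_\bullet=P_\bullet\euler_0^{(0)}$, the $\e$-summand is $P_\bullet(2\euler_0^{(d)}-rR)$, and for $d$ even the $\e_s\oplus\e_t$-summand is $P_\bullet\euler_0^{(d/2)}\oplus P_\bullet\ab_{d/2,0}$. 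Collecting indices, the $\euler_0^{(j)}$'s occurring run exactly through $j=0,\dots,d$ and the $\ab_{i,0}$'s exactly through $i=1,\dots,d-1$, each once (the even-$d$ case being the delicate one, where $\e_s,\e_t$ supply the otherwise-missing $\ab_{d/2,0}$); after the harmless $P_\bullet$-linear recombinations above and the unipotent, upper-triangular, $\CM[qQ]$-coefficient change of basis $\{\euler_0^{(j)}\}\rightsquigarrow\{\euler_0^{j}\}$ of~(\ref{eq:changement-base-euler-diedral}), this is precisely the displayed decomposition $\CM[V\times V^*]^W=\bigoplus_{j=0}^{d}P_\bullet\euler_0^{j}\oplus\bigoplus_{i=1}^{d-1}P_\bullet\ab_{i,0}$. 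In particular $\CM[V\times V^*]^W$ is free of rank $2d$ over $P_\bullet$ on $1,\euler_0,\dots,\euler_0^d,\ab_{1,0},\dots,\ab_{d-1,0}$, and a fortiori it is generated as a $\CM$-algebra by $q,r,Q,R,\euler_0,\ab_{1,0},\dots,\ab_{d-1,0}$.

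To see the listed relations suffice, let $\tilde A=\CM[\tilde q,\tilde r,\tilde Q,\tilde R,\tilde\euler,\tilde\ab_1,\dots,\tilde\ab_{d-1}]$ be a polynomial ring, $\IG\subseteq\tilde A$ the ideal generated by the relations $(\Zrm_i^0)$ and $(\Zrm_{i,j}^0)$ read with tildes (with $\tilde\euler^{(j)}$ understood as the polynomial in $\tilde\euler,\tilde q\tilde Q$ fixed by~(\ref{eq:changement-base-euler-diedral})), and $\pi\colon\tilde A/\IG\surto\CM[V\times V^*]^W$ the natural surjection. I claim $\tilde A/\IG$ is spanned, as a module over $\CM[\tilde q,\tilde r,\tilde Q,\tilde R]$, by the same $2d$ monomials $1,\tilde\euler,\dots,\tilde\euler^d,\tilde\ab_1,\dots,\tilde\ab_{d-1}$: the relations $(\Zrm_{i,j}^0)$ together with~(\ref{eq:changement-base-euler-diedral}) rewrite any $\tilde\ab_i\tilde\ab_j$ with at most one $\tilde\ab$ surviving, so inductively all products of two or more of the $\tilde\ab$'s are eliminated; the relations $(\Zrm_i^0)$, in the form $\tilde\euler\,\tilde\ab_i=\tilde q\,\tilde\ab_{i+1}+\tilde Q\,\tilde\ab_{i-1}$ (with $\tilde\ab_0=\tilde r$, $\tilde\ab_d=\tilde R$), reduce each $\tilde\euler^m\tilde\ab_i$ to a $\CM[\tilde q,\tilde r,\tilde Q,\tilde R]$-combination of $\tilde\ab_1,\dots,\tilde\ab_{d-1}$ and powers of $\tilde\euler$; and combining $(\Zrm_{1,d-1}^0)$, $(\Zrm_1^0)$ and $(\Zrm_{2,d-1}^0)$ yields in $\tilde A/\IG$ the identity $\tilde\euler^{(d+1)}=\tilde q\tilde Q\,\tilde\euler^{(d-1)}+\tilde r\tilde R\,\tilde\euler-\tilde q\tilde R\,\tilde\ab_1-\tilde Q\tilde r\,\tilde\ab_{d-1}$, which through~(\ref{eq:changement-base-euler-diedral}) expresses $\tilde\euler^{d+1}$ — hence all higher powers — in terms of $1,\tilde\euler,\dots,\tilde\euler^d$ and $\tilde\ab_1,\tilde\ab_{d-1}$. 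Now $\pi$ is $\CM[\tilde q,\tilde r,\tilde Q,\tilde R]$-linear for the injective map $\tilde q\mapsto q$, etc. ($q,r,Q,R$ being algebraically independent), and it carries this $2d$-element spanning set onto the $P_\bullet$-basis of the first step; a surjection from a module generated by $N$ elements onto a free module of rank $N$ that sends the generators onto a basis is an isomorphism, so $\pi$ is an isomorphism, i.e. $(\Zrm_i^0)$ and $(\Zrm_{i,j}^0)$ generate the ideal of relations.

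For minimality, write $B=\CM[V\times V^*]^W$; the minimal number of algebra generators is $\dim_\CM(B_+/B_+^2)$, which is $\le d+4$ by the first step. Each of the $d+4$ chosen generators is indecomposable (in its bidegree the decomposable invariants, read off from the module decomposition, form a hyperplane complementary to the line spanned by it), and for $d\ge 3$ the $d+4$ bidegrees $(2,0),(0,2),(1,1),(d,0),(0,d),(d-i,i)$ $(1\le i\le d-1)$ are pairwise distinct, so the generators are linearly independent in $B_+/B_+^2$ and $\dim_\CM(B_+/B_+^2)=d+4$; the cases $d\le 2$ are checked by hand. For the relations, one shows $(\Zrm_i^0)$ and $(\Zrm_{i,j}^0)$ are linearly independent over $\CM$ in $\IG/\mG\IG$, $\mG$ the irrelevant ideal of $\tilde A$: among the listed relations only the $(\Zrm_{i,j}^0)$ with a fixed value of $i+j$ share a bidegree, and (for $d\ge 3$) the only relations of strictly smaller bidegree are the $(\Zrm_i^0)$, which are linear in the $\tilde\ab$'s and each carry a factor $\tilde q$, $\tilde Q$ or $\tilde\euler$; hence in a common bidegree $(2d-\nu,\nu)$ the monomial $\tilde\ab_i\tilde\ab_{\nu-i}$ cannot arise from $\mG\IG$, and comparing its coefficient shows $\sum_ic_i(\Zrm_{i,\nu-i}^0)\in\mG\IG$ forces all $c_i=0$ (each $(\Zrm_i^0)$ being alone in its bidegree is immediate). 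Counting yields $d-1$ relations $(\Zrm_i^0)$ and $d(d-1)/2$ relations $(\Zrm_{i,j}^0)$, i.e. $(d+2)(d-1)/2$ in all, and this is the minimal number.

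The main obstacle is the first step: producing explicit free generators of each isotypic $\Hom$-module (this is where the rational-function computations of \S1.C earn their keep) and book-keeping the contributions so that exactly $\euler_0^{(0)},\dots,\euler_0^{(d)}$ and exactly $\ab_{1,0},\dots,\ab_{d-1,0}$ occur, the $d$-even situation needing special attention. Once that structural decomposition is in hand the rewriting in the second step is essentially routine — its one nontrivial ingredient being the derived identity for $\euler_0^{(d+1)}$ — and the minimality arguments of the third step are short combinatorial checks.
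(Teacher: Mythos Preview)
Your proof is correct and follows essentially the same architecture as the paper's: first establish the $P_\bullet$-module decomposition via the representation theory of $W$ (the paper phrases this using the coinvariant modules $E$ and $E^*$, you phrase it via isotypic components --- same content), then show the relations suffice by proving the abstract quotient is spanned by the same $2d$ monomials (both proofs hinge on the same identity expressing $\euler_0^{d+1}$, derived from $(\Zrm_{1,d-1}^0)$, $(\Zrm_1^0)$ and $(\Zrm_{2,d-1}^0)$), and finally argue minimality.

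Two small remarks. First, the partial-fraction identities of \S1.C are not actually needed for the module decomposition --- the free generators of each isotypic component are visible directly from the character table and Chevalley's theorem (the paper does not invoke \S1.C here either; those identities are used later for cellular characters). Second, your minimality arguments differ in packaging from the paper's: for generators the paper simply cites an external reference whereas you give a direct bidegree argument in $B_+/B_+^2$; for relations the paper specialises modulo $\pG_0=(q,r,Q,R)$ to a finite-dimensional quotient and checks minimality there, whereas you work directly in $\IG/\mG\IG$. Both routes are standard and both work; yours is slightly more self-contained.
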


\bigskip

\begin{proof}
Let $E^*$ denote the subspace of $\CM[V]$ defined by 
$$E^*= \CM \oplus \Bigl(\bigoplus_{i=1}^{d-1} \bigl(\CM X^i \oplus \CM Y^i\bigr)\Bigr) \oplus \CM (X^d-Y^d).$$
Then $E^*$ is a graded sub-$\CM[W]$-module of $\CM[V]$. 
If $0 \le i \le d$, let $E_i^*$ denote the homogeneous component of degree $i$ of $E^*$. 
Whenever $1 \le i \le d-1$, it affords $\chi_i$ for character whereas $E_0^*$ and $E_d^*$ 
afford respectively $\unb_W$ and $\e$ for characters. Similarly, we define 
$$E= \CM \oplus \Bigl(\bigoplus_{i=1}^{d-1} \bigl(\CM x^i \oplus \CM y^i\bigr)\Bigr) \oplus \CM (x^d-y^d).$$
Then $E$ is a graded sub-$\CM[W]$-module of $\CM[V^*]$. 
If $0 \le i \le d$, let $E_i$ denote the homogeneous component of degree $i$ of $E$. 
Whenever $1 \le i \le d-1$, it affords $\chi_i$ for character whereas $E_0$ and $E_d$ 
afford respectively $\unb_W$ and $\e$ for characters. 
Moreover, the morphism of $\CM[V]^W$-modules $\CM[V]^W \otimes E^* \to \CM[V]$ induced 
by the multiplication is a $W$-equivariant isomorphism and the morphism of 
$\CM[V^*]^W$-module $\CM[V^*]^W \otimes E \to \CM[V^*]$ 
induced by the multiplication is a $W$-equivariant isomorphism. Consequently, 
$$\CM[V \times V^*]^W = P_\bullet \otimes (E^* \otimes E)^W.\leqno{(\clubsuit)}$$
An easy computation of the subspaces $(E_i^* \otimes E_j)^W$ based on the previous remarks show that 
$$(1,\euler_0,\euler_0^{(2)},\dots,\euler_0^{(d-1)},(X^d-Y^d)(x^d-y^d),\ab_{1,0},\ab_{2,0},\dots,\ab_{d-1,0})$$
is a $\CM$-basis of $(E \otimes E^*)^W$. By $(\clubsuit)$, it is also a $P_\bullet$-basis of 
$\CM[V \times V^*]^W$. On the other hand, 
$$(X^d-Y^d)(x^d-y^d)=2 \euler_0^{(d)} - Rr,$$
so $(1,\euler_0,\euler_0^{(2)},\dots,\euler_0^{(d)},\ab_{1,0},\ab_{2,0},\dots,\ab_{d-1,0})$ is a 
$P_\bullet$-basis of $\CM[V \times V^*]^W$. By~(\ref{eq:changement-base-euler-diedral}),
$$
\CM[V\times V^*]^W = P_\bullet \oplus 
P_\bullet \,\euler_0 \oplus P_\bullet \,\euler_0^2 \oplus \cdots \oplus P_\bullet \,\euler_0^d 
\oplus P_\bullet \,\ab_{1,0} \oplus P_\bullet \,\ab_{2,0} \oplus \cdots \oplus P_\bullet \,\ab_{d-1,0},
$$
which shows the last assertion of the Theorem.

\medskip

It also proves that $\CM[V \times V^*]^W = \CM[q,r,Q,R,\euler_0,\ab_{1,0},\ab_{2,0},\dots,\ab_{d-1,0}]$.
Let $A_1$, $A_2$, \dots, $A_{d-1}$ be indeterminates over $\CM[q,r,Q,R]$: 
we have a surjective morphism 
$$\CM[q,r,Q,R,E,A_1,A_2,\dots,A_{d-1}] \longsurto \CM[V \times V^*]^W$$
which sends 
$q$, $r$, $Q$, $R$, $E$, $A_1$, $A_2$, \dots, $A_{d-1}$ on 
$q$, $r$, $Q$, $R$, $\euler_0$, $\ab_{1,0}$, $\ab_{2,0}$,\dots, $\ab_{d-1,0}$ respectively.

For $1 \le i \le j \le d-1$, let $F_i$ (respectively $F_{i,j}$) denote the element of 
the polynomial algebra 
$\CM[q,r,Q,R,E,A_1,A_2,\dots,A_{d-1}]$ corresponding to the relation $(Z_i^0)$ (respectively 
$(Z_{i,j}^0)$). Let $A$ denote the quotient of $\CM[q,r,Q,R,E,A_1,A_2,\dots,A_{d-1}]$ 
by the ideal $\aG$ generated by the $F_i$'s and the $F_{i,j}$'s. 
We denote by 
$q_0$, $r_0$, $Q_0$, $R_0$, $E_0$, $A_{1,0}$, $A_{2,0}$, \dots, $A_{d-1,0}$ 
the respective images of $q$, $r$, $Q$, $R$, $E$, $A_1$, $A_2$, \dots, $A_{d-1}$ in $A$. We then 
have a surjective morphism of bigraded $\CM$-algebras $\ph : A \surto \CM[V \times V^*]^W$. 
We want to show that $\ph$ is an isomorphism. For this, it is sufficient to show that the 
bi-graded Hilbert series coincide. But, 
$$\dim_\CM^\bigrad(A) \ge \dim_\CM^\bigrad(\CM[V \times V^*]^W),\leqno{(\diamondsuit)}$$
where an inequality between two power series means that we have the corresponding inequality 
between all the coefficients. 

\medskip

We set 
$P_0=\CM[q_0,r_0,Q_0,R_0]$. Let 
$$A'=P_0 + P_0 E_0 + P_0 E_0^2 + \cdots + P_0 E_0^d + P_0 A_{1,0} + P_0 A_{2,0} + \cdots + P_0 A_{d-1,0}.$$
By construction, 
$$\dim_\CM^\bigrad(A') \le \dim_\CM^\bigrad(\CM[V \times V^*]^W).\leqno{(\heartsuit)}$$
We will prove that 
$$\text{$A'$ is a subalgebra of $A$.}\leqno{(\spadesuit)}$$
For this, taking into account the form of the $F_i$'s and the $F_{i,j}$'s, 
it is sufficient to show that $E_0^{d+1} \in A'$. But, by~(\ref{eq:changement-base-euler-diedral}),
$$A_{1,0}A_{d-1,0} = Q_0q_0 \Bigl(\sum_{0 \le j < (d-2)/2} n_{d-2,j} E_0^{d-2-2j}\Bigr) 
+ R_0r_0 - \sum_{0 \le j < d/2} n_{d,j} E_0^{d-2j},$$
and $n_{d,0}=1$. So 
$$E_0^d = -A_{1,0}A_{d-1,0} + Q_0q_0 \Bigl(\sum_{0 \le j < (d-2)/2} n_{d-2,j} E_0^{d-2-2j}\Bigr) 
+ R_0r_0 - \sum_{1 \le j < d/2} n_{d,j} E_0^{d-2j}$$
and so 
$$E_0^{d+1} = - E_0 A_{1,0}A_{d-1,0} + Q_0q_0 \Bigl(\sum_{0 \le j < (d-2)/2} n_{d-2,j} E_0^{d-1-2j}\Bigr) 
+ R_0r_0 - \sum_{1 \le j < d/2} n_{d,j} E_0^{d+1-2j}.$$
It is then sufficient to show that $E_0 A_{1,0} A_{d-1,0} \in A'$. But 
$E_0 A_{1,0} A_{d-1,0} = (q_0 A_{2,0} + Q_0 r_0) A_{d-1,0}$, which concludes the proof of $(\spadesuit)$.

\medskip

Since $A'$ contains $q_0$, $r_0$, $Q_0$, $R_0$, $E_0$, $A_{1,0}$, $A_{2,0}$, \dots, $A_{d-1,0}$ 
and since $A$ is generated by these elements, we have $A=A'$. It follows from $(\diamondsuit)$ 
and $(\heartsuit)$ that $\dim_\CM^\bigrad(A) = \dim_\CM^\bigrad(\CM[V \times V^*]^W)$, 
which shows that $\ph : A \to \CM[V \times V^*]^W$ is an isomorphism. In other words, 
this shows that the presentation of $\CM[V \times V^*]^W$ given in 
Theorem~\ref{theo:diedral-invariants} is correct. 

\medskip

It remains to prove the minimality of this presentation. The minimality of the number 
of generators follows from~\cite{bonnafe thiel}. Let us now prove the minimality of 
the number of relations. 
For this, let $\pG_0$ denote the bi-graded maximal ideal of $P_\bullet$ and set 
$B=\CM[V \times V^*]^W/\pG_0 \CM[V \times V^*]^W$. 
We denote by $e$, $a_1$, $a_2$,\dots, $a_{d-1}$ the respective images of 
$\euler_0$, $\ab_{1,0}$, $\ab_{2,0}$,\dots, $\ab_{d-1,0}$ in $B$. Hence,
$$B=\CM \oplus \CM e \oplus \CM e^2 \oplus \cdots \oplus \CM e^d 
\oplus \CM a_1 \oplus \CM a_2 \oplus \cdots \oplus \CM a_{d-1}$$
and $B$ admits the following presentation:
$$
\begin{cases}
\text{Generators:} & e,a_{1},a_{2},\dots,a_{d-1} \\
\text{Relations ($1 \le i \le j \le d-1$)~:} & 
\begin{cases}
ea_i=0 \\
a_ia_j =
\begin{cases}
0 & \text{si $i+j \neq d$,}\\
-e^d & \text{si $i+j=d$.}
\end{cases}
\end{cases}
\end{cases}
$$
It is sufficient to prove that the number of relations of $B$ is minimal. 
By reducing modulo the ideal $Be$, we get that all the relations of the form 
$a_ia_j = 0$ or $-e^d$ are necessary. By reducing modulo 
the ideal $B a_1 + \cdots + B a_{i-1} + B a_{i+1} + \cdots B a_{d-1}$, 
we get that the relations $ea_i=0$ are necessary. 
\end{proof}

\bigskip

\begin{rema}\label{rem:tau-invariants}
It is easily checked that the element $\t$ defined in~\S\ref{sub:diedral} satisfies
$$\lexp{\t}{q}=q,\quad \lexp{\t}{Q}=Q,\quad \lexp{\t}{\euler_0}=\euler_0
\qquad \text{and}\qquad \lexp{\t}{\ab_{i,0}}=-\ab_{i,0}$$
for all $0 \le i \le d$ (this follows from the fact that $\xi^d=-1$).\finl
\end{rema}

\bigskip

\begin{rema}[Gradings]\label{rem:grading}
The algebra $\CM[V \times V^*]$ admits a natural $(\NM \times \NM)$-grading, by putting elements of 
$V$ in bi-degree $(1,0)$ and elements of $V^*$ in bi-degree $(0,1)$. This bi-grading is stable 
under the action of $W$, so $\CM[V \times V^*]^W$ inherits this bi-grading. 
Note that the generators and the relations given by Theorem~\ref{theo:diedral-invariants} 
are bi-homogeneous.

This $(\NM \times \NM)$-grading induces a $\ZM$-grading such that any bi-homogeneous 
element of bi-degree $(m,n)$ is $\ZM$-homogeneous of degree $n-m$ (in other words, 
elements of $V$ have $\ZM$-degree $-1$ while elements of $V^*$ have $\ZM$-degree $1$).\finl
\end{rema}

\bigskip

\section{Cherednik algebras}

\medskip

\subsection{Definition}
We denote by $\CCB$ the $\CM$-vector space of maps $\Ref(W) \to \CM$ which are constant 
on conjugacy classes. If $i \in \ZM/d\ZM$, we denote by $C_i$ the element of 
$\CCB^*$ which sends $c \in \CCB$ to $c_{s_i}$. By~(\ref{eq:c}), $C_i=C_{i+2}$. 
Let $A=C_0$ and $B=C_1$. If $d$ is odd, then $A=B$ and $\CM[\CCB]=\CM[A]$ whereas, if $d$ is even, 
then $A \neq B$ (see~(\ref{eq:s-and-t})) and $\CM[\CCB]=\CM[A,B]$. 

The {\it generic rational Cherednik algebra at $t=0$} 
is the $\CM[\CCB]$-algebra $\Hb$ defined as the quotient 
of $\CM[\CCB] \otimes \bigl(\Trm(V \oplus V^*) \rtimes W\bigr)$ 
by the following relations (here, $\Trm(V \oplus V^*)$ 
is the tensor algebra of $V \oplus V^*$ over $\CM$):
\equat\label{relations-1}\begin{cases}
[u,u']=[U,U']=0, \\
\\
[u,U] = -2\DS{\sum_{i \in \ZM/d\ZM} \hskip1mm C_i 
\hskip1mm\frac{\langle u,\a_i \rangle \cdot \langle \a_i^\ve,U\rangle}{\langle \a_i^\ve,\a_i\rangle}
\hskip1mm s_i,} 
\end{cases}\endequat
for $U$, $U' \in V^*$ and $u$, $u' \in V$. Note that we have followed the 
convention of~\cite{calogero}.

%
%
Given the relations~(\ref{relations-1}), 
the following assertions are clear:
\begin{itemize}
\item[$\bullet$] There is a unique morphism of $\CM$-algebras 
$\CM[V] \to \Hb$ sending $U \in V^* \subset \CM[V]$ to the class of
$U \in \Trm(V \oplus V^*) \rtimes W$ in $\Hb$.

\item[$\bullet$] There is a unique morphism of $\CM$-algebras 
$\CM[V^*] \to \Hb$ sending $u \in V \subset \CM[V^*]$ to the class of
$u \in \Trm(V \oplus V^*) \rtimes W$ in $\Hb$.

\item[$\bullet$] There is a unique morphism of $\CM$-algebras  
$\CM W \to \Hb$ sending $w \in W$ to the class of
$w \in \Trm(V \oplus V^*) \rtimes W$ in $\Hb$.

\item[$\bullet$] The $\CM$-linear map $\CM[\CCB] \otimes \CM[V] \otimes \CM W 
\otimes \CM[V^*] \longto \Hb$ induced by the three morphisms defined above
and the multiplication map is surjective.
Note that it is $\CM[\CCB]$-linear.
\end{itemize}
The last statement is strenghtened by the following fundamental 
result by Etingof and Ginzburg~\cite[Theorem 1.3]{EG} (see also~\cite[Theorem~4.1.2]{calogero}).

\bigskip

\begin{theo}[Etingof-Ginzburg]\label{PBW-1}
The multiplication map  $\CM[\CCB] \otimes \CM[V] \otimes \CM W 
\otimes \CM[V^*] \longto \Hb$ is an isomorphism of $\CM[\CCB]$-modules.
\end{theo}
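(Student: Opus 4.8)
This is the Poincar\'e--Birkhoff--Witt theorem of Etingof and Ginzburg~\cite[Theorem~1.3]{EG} (see also~\cite[Theorem~4.1.2]{calogero}), which we simply quote in the body of the text; here is how one would prove it. Surjectivity of the multiplication map has already been observed, so the point will be to prove that the spanning family of \emph{normal monomials} $p\cdot w\cdot q$ --- with $p$ a monomial in $\CM[V]$, $w\in W$ and $q$ a monomial in $\CM[V^*]$ --- is $\CM[\CCB]$-linearly independent in $\Hb$. Since $W$ is here a dihedral group acting on a plane, the most economical approach is a direct application of Bergman's Diamond Lemma, the relevant formulas being those of~\S\ref{sub:diedral}.

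The plan is to view $\Hb$ as generated over $\CM[\CCB]$ by $V$, $V^*$ and $W$ subject to~(\ref{relations-1}) and to the relations of the smash product $\Trm(V\oplus V^*)\rtimes W$, and to orient these relations into the rewriting system that brings every word into the normal form $\CM[V]\cdot W\cdot\CM[V^*]$ (that is, in the generators: $V^*$-letters to the left, then $W$-letters, then $V$-letters). Concretely: sort $V^*$-letters among themselves and $V$-letters among themselves; pull each $V^*$-letter to the left of a $W$-letter via $wU\mapsto\lexp{w}{U}\,w$ and each $V$-letter to the right of a $W$-letter via $uw\mapsto w\,\lexp{w^{-1}}{u}$; and rewrite a forbidden pair $uU$ (with $u\in V$, $U\in V^*$) by $uU\mapsto Uu-2\sum_{i\in\ZM/d\ZM}C_i\,\tfrac{\langle u,\a_i\rangle\,\langle\a_i^\ve,U\rangle}{\langle\a_i^\ve,\a_i\rangle}\,s_i$, as dictated by~(\ref{relations-1}). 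One then has to check that every overlap ambiguity resolves. All of them do so trivially --- by associativity and by the fact that $W$ acts by algebra automorphisms --- except the ambiguity on the word $uU_1U_2$ (and, symmetrically, $u_1u_2U$): there the two reductions will differ by the element $2\sum_i C_i\,\tfrac{\langle u,\a_i\rangle}{\langle\a_i^\ve,\a_i\rangle}\bigl(\langle\a_i^\ve,U_1\rangle\,(s_i-1)(U_2)-\langle\a_i^\ve,U_2\rangle\,(s_i-1)(U_1)\bigr)\,s_i$, which vanishes summand by summand because $(s_i-1)(U)=-\tfrac{2\langle\a_i^\ve,U\rangle}{\langle\a_i^\ve,\a_i\rangle}\,\a_i$ makes $\langle\a_i^\ve,U_1\rangle\,(s_i-1)(U_2)$ symmetric in $U_1$ and $U_2$. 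For the dihedral group all of this is the explicit data of~\S\ref{sub:diedral}. Bergman's Diamond Lemma would then yield that the normal monomials form a $\CM[\CCB]$-basis of $\Hb$, which is the statement.

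As a cross-check --- and this is the original argument --- one could instead pass through Dunkl operators. Introduce a formal parameter $t$ and let $\Hb_t$ be the $\CM[t]\otimes\CM[\CCB]$-algebra defined like $\Hb$ but with $t\,\langle u,U\rangle$ added to the right-hand side of the cross relation, so that $\Hb_t/(t)=\Hb$. Localize $\CM[V]$ at $\delta=\prod_{i\in\ZM/d\ZM}\a_i$ and invert $t$; then let $V^*$ act by multiplication, $W$ tautologically, and each $u\in V$ by the Dunkl operator $T_u=-t\,\partial_u-\sum_{i\in\ZM/d\ZM}C_i\,\tfrac{\langle u,\a_i\rangle}{\a_i}\,(1-s_i)$ on $\CM[t^{\pm1},\CCB]\otimes\CM[V][\delta^{-1}]$. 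The crux is that these operators satisfy~(\ref{relations-1}) --- the cross relation by the computation above, the relation $[T_u,T_{u'}]=0$ by the classical commutation of Dunkl operators --- so that $\Hb_t[\delta^{-1}]$ maps into the smash product of $W$ with the algebra of differential operators on the regular locus, inside which the normal monomials become a genuine PBW basis. Hence they are $\CM[t^{\pm1},\CCB]$-linearly independent, a fortiori $\CM[t]\otimes\CM[\CCB]$-linearly independent; they therefore form a basis of $\Hb_t$, which is in particular free, hence flat, over $\CM[t]$, and specializing $t=0$ carries this basis onto the normal monomials of $\Hb_0=\Hb$.

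Either way, the single substantial input --- the only place where one uses that the $s_i$ generate a group --- is one identity attached to the reflection arrangement: the termwise symmetry of the correction terms in the Diamond-Lemma check, equivalently the commutativity of the Dunkl operators. Everything else is bookkeeping, and in the dihedral case it is completely explicit.
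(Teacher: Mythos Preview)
The paper does not give its own proof of this theorem: it is quoted as a result of Etingof--Ginzburg, with a citation to~\cite[Theorem~1.3]{EG} and~\cite[Theorem~4.1.2]{calogero}, and nothing more. So there is no ``paper's proof'' to compare against; you have supplied what the paper deliberately omits.

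Your sketch is correct and standard. The Diamond Lemma route is set up properly: the only non-formal overlaps are indeed $uU_1U_2$ and $u_1u_2U$, and your termwise resolution via $(s_i-1)(U)\in\CM\alpha_i$ (respectively $(s_i-1)(u)\in\CM\alpha_i^\ve$) is exactly the computation that makes both sides symmetric. The overlaps involving $W$-letters resolve by $W$-equivariance of the cross relation, which in turn uses that the $C_i$ are constant on conjugacy classes --- you allude to this with ``$W$ acts by algebra automorphisms'', which is right. The Dunkl-operator argument is the original one in~\cite{EG}; your handling of the passage from $t$ invertible to $t=0$ (independence over $\CM[t^{\pm1},\CCB]$ $\Rightarrow$ independence over $\CM[t,\CCB]$ $\Rightarrow$ basis of $\Hb_t$ $\Rightarrow$ basis of $\Hb_t/(t)=\Hb$) is the correct way to descend.

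One small quibble of emphasis: you say the single substantial input is ``the only place where one uses that the $s_i$ generate a group''. In fact the termwise resolution of the $uU_1U_2$ and $u_1u_2U$ overlaps uses only that each $s_i$ is a reflection, not that they generate $W$; the group structure enters rather in the $W$-equivariance check (the overlap $uwU$), which needs the class function property of $c$. This does not affect the validity of the argument.
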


\bigskip

\begin{rema}\label{rem:tau-parametres}
By~\cite[\S{3.5.C}]{calogero}, the group $\Nrm_{\Gb\Lb_\CM(V)}(W)$ acts 
naturally on $\Hb$. It follows from~(\ref{eq:tau}) that
$$\lexp{\t}{A}=B\qquad\text{and}\qquad \lexp{\t}{B}=A.$$
Here, $\t$ is the element of $\Nrm_{\Gb\Lb_\CM(V)}(W)$ defined 
in~\S\ref{sub:diedral}.\finl
\end{rema}

\bigskip

\begin{rema}[Gradings]\label{rem:grading-h}
The algebra $\Hb$ admits a natural $(\NM \times \NM)$-grading, by putting 
$V$ in bi-degree $(1,0)$, $V^*$ in bi-degree $(0,1)$, $W$ in degree $(0,0)$ and 
$\CCB^*$ in degree $(1,1)$ (see for instance~\cite[\S{3.2}]{calogero}). 

This $(\NM \times \NM)$-grading induces a $\ZM$-grading such that any bi-homogeneous 
element of bi-degree $(m,n)$ is $\ZM$-homogeneous of degree $n-m$. In other words, 
$\deg(V)=-1$, $\deg(V^*)=1$ and $\deg(W)=0=\deg(\CCB^*)=0$.\finl
\end{rema}

\bigskip

\subsection{Specialization}
Given $c \in \CCB$, we denote by $\CG_c$ the maximal ideal of $\CM[\CCB]$ defined by  
$\CG_c=\{f \in \CM[\CCB]~|~f(c)=0\}$: it is the ideal generated by 
$(C_i-c_{s_i})_{i \in \ZM/d\ZM}$. We set
$$\Hb_c = (\CM[\CCB]/\CG_c) \otimes_{\CM[\CCB]} \Hb = \Hb/\CG_c \Hb .$$
The $\CM$-algebra $\Hb_c$ is the quotient of the $\CM$-algebra 
$\Trm(V \oplus V^*) \rtimes W$ by the ideal generated by the
following relations: 
\equat\label{relations specialisees-0}\begin{cases}
[u,u']=[U,U']=0, \\
\\
[u,U] = -2\DS{\sum_{i \in \ZM/d\ZM} \hskip1mm c_{s_i} 
\hskip1mm\frac{\langle u,\a_i \rangle \cdot \langle \a_i^\ve,U\rangle}{\langle \a_i^\ve,\a_i\rangle}
\hskip1mm s_i,} 
\end{cases}\endequat
for $U$, $U' \in V^*$ and $u$, $u' \in V$. 

\bigskip

\begin{rema}[Grading]\label{rem:grading-hc}
The ideal $\CG_c$ is not bi-homogeneous (except if $c=0$) so the 
algebra $\Hb_c$ does not inherit from $\Hb$ an $(\NM \times \NM)$-grading. 
However, $\CG_c$ is $\ZM$-homogeneous, so $\Hb_c$ still admits a natural 
$\ZM$-grading.\finl
\end{rema}

\bigskip

\subsection{Calogero-Moser space}\label{sub:calogero}
We denote by $Z$ the centre of $\Hb$. By~\cite{EG}, it contains 
$\CM[V]^W$ and $\CM[V^*]^W$ so, by Theorem~\ref{PBW-1}, it contains 
the subalgebra
$$P = \CM[\CCB] \otimes \CM[V]^W \otimes \CM[V^*]^W.$$
Similarly, if $c \in \CCB$, we denote by $Z_c$ the centre of $\Hb_c$: 
it turns out~\cite[Corollary~4.2.7]{calogero} that $Z_c$ is the image of $Z$ and that 
the image of $P$ in $Z_c$ is $P_\bullet=\CM[V]^W \times \CM[V^*]^W$. 
Recall also (see for instance~\cite[Corollary~4.2.7]{calogero}) that 
\equat\label{eq:z-p-libre}
\text{\it $Z$ is a free $P$-module of rank $|W|$.}
\endequat

Since the $\CM$-algebra $\ZCB$ is finitely generated, we can associate to it 
an algebraic variety over $\CM$, called the {\it generic Calogero-Moser space}, 
and which will be denoted by $\ZCB$. If $c \in \CCB$, we denote by 
$\ZCB_{\! c}$ the algebraic variety associated with the $\CM$-algebra 
$Z_c$. 

\bigskip

\subsection{About the presentation of $Z$} 
We follow here the method of~\cite{bonnafe thiel}. 
If $h \in \Hb$, it follows from Theorem~\ref{PBW-1} that there exists 
a unique family of elements $(h_w)_{w \in W}$ of 
$\CM[\CCB] \otimes \CM[V] \otimes \CM[V^*]$ such that  
$$h=\sum_{w \in W} h_w w.$$
We define the $\CM[\CCB]$-linear map $\troncation : \Hb \to \CM[\CCB] \otimes \CM[V] \otimes \CM[V^*]$ 
by 
$$\troncation(h)=h_1.$$  
The next lemma is proved in~\cite{bonnafe thiel}:

\bigskip

\begin{lem}\label{lem:0}
The restriction of  $\troncation$ to $Z$ yields an isomorphism of bi-graded $\CM[\CCB]$-modules 
$$\troncation : Z \longiso \CM[\CCB \times V \times V^*]^W.$$
\end{lem}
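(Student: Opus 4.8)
The plan is to analyze the $\CM[\CCB]$-linear map $\troncation:\Hb\to\CM[\CCB]\otimes\CM[V]\otimes\CM[V^*]$ by keeping track of the $W$-action, and to show that its restriction to the centre $Z$ lands isomorphically onto the invariant part. First I would recall that $\Nrm_{\Gb\Lb_\CM(V)}(W)$, and in particular $W$ itself acting by conjugation, acts on $\Hb$ preserving the grading and the PBW decomposition of Theorem~\ref{PBW-1}; under this action, for $w\in W$ and $h=\sum_{v\in W}h_v\,v$ one computes $\lexp{w}{h}=\sum_v \lexp{w}{h_v}\,wvw^{-1}$, so the coefficient of $1\in W$ in $\lexp{w}{h}$ is $\lexp{w}{h_1}$. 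Hence $\troncation$ is $W$-equivariant, where $W$ acts on $\CM[\CCB]\otimes\CM[V]\otimes\CM[V^*]$ through its (diagonal) action on $V\oplus V^*$ and trivially on $\CCB$. Since every element of $Z$ is $W$-invariant (conjugation by $w\in W$ is inner in $\Hb$), $\troncation$ maps $Z$ into $\bigl(\CM[\CCB]\otimes\CM[V]\otimes\CM[V^*]\bigr)^W=\CM[\CCB\times V\times V^*]^W$, and it is a morphism of bi-graded $\CM[\CCB]$-modules because $\troncation$ visibly preserves the $(\NM\times\NM)$-grading of Remark~\ref{rem:grading-h}.

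Next I would prove injectivity. Suppose $z\in Z$ with $\troncation(z)=0$, i.e. $z=\sum_{w\neq 1}z_w\,w$ with $z_w\in\CM[\CCB]\otimes\CM[V]\otimes\CM[V^*]$. The key point is that $z$ centralizes $\CM[V]$ and $\CM[V^*]$: commuting $z$ with $U\in V^*$ and with $u\in V$ and using the PBW basis forces relations on the $z_w$. More efficiently, one invokes that $Z$ is a domain (or just that $\Hb_0$, the specialization at $c=0$, has centre $\CM[V\times V^*]^W$ with $\troncation$ there being the classical projection $\CM[V\times V^*]\rtimes W\to\CM[V\times V^*]$, which is injective on $(\CM[V\times V^*])^W$): an element of $Z$ killed by $\troncation$ specializes at $c=0$ to an element of $Z_0=\CM[V\times V^*]^W$ killed by the classical truncation, hence is $0$ after specialization; then a graded Nakayama / freeness argument using $(\ref{eq:z-p-libre})$ — $Z$ is free over $P=\CM[\CCB]\otimes P_\bullet$ and the specialization at $\CG_0$ is faithfully flat enough on graded pieces — lifts this to $z=0$. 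I would phrase this as: $\troncation$ is a map of graded free $\CM[\CCB]$-modules (source $Z$ free over $P$, target free over $\CM[\CCB]$ by Theorem~\ref{theo:diedral-invariants}), and it suffices to check it becomes an isomorphism after $\otimes_{\CM[\CCB]}\CM[\CCB]/\CG_0$, i.e. at the origin.

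Then I would establish surjectivity / the isomorphism at $c=0$ and conclude by ranks. At $c=0$ we have $\Hb_0=\CM[V\times V^*]\rtimes W$, its centre is $\CM[V\times V^*]^W$ by Etingof--Ginzburg, and $\troncation$ restricted there is the identity map $\CM[V\times V^*]^W\to\CM[V\times V^*]^W$ — manifestly an isomorphism of bi-graded $\CM$-modules. On the other hand both $Z$ and $\CM[\CCB\times V\times V^*]^W$ are graded free $\CM[\CCB]$-modules whose ranks in each bi-degree agree after specialization at $\CG_0$ (they are the bi-graded dimensions of $\CM[V\times V^*]^W$); since a graded morphism of graded free $\CM[\CCB]$-modules that is an isomorphism modulo $\CG_0$ and compatible with the gradings is an isomorphism (comparing Hilbert series degree by degree over the polynomial ring $\CM[\CCB]$), we conclude that $\troncation:Z\longiso\CM[\CCB\times V\times V^*]^W$. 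The main obstacle is the freeness/specialization bookkeeping: one must know that $Z$ is graded free over $\CM[\CCB]$ (not merely over $P$), which follows from $(\ref{eq:z-p-libre})$ together with $P=\CM[\CCB]\otimes P_\bullet$ and $P_\bullet$ being a free $\CM$-module with explicit bi-graded basis — and dually that the target is graded free over $\CM[\CCB]$, which is exactly the content of the decomposition in Theorem~\ref{theo:diedral-invariants}. Once those two freeness statements are in hand, the equality of bi-graded Hilbert series over $\CM[\CCB]$ is automatic from the $c=0$ comparison, and the proof is complete.
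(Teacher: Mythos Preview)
The paper does not actually prove this lemma: it simply records that ``the next lemma is proved in~\cite{bonnafe thiel}''. So there is no in-paper argument to compare against, and your proposal has to be assessed on its own merits.

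Your strategy is the standard one and it is correct: (i) $\troncation$ is $W$-equivariant for the conjugation action on $\Hb$ and the diagonal action on $\CM[\CCB]\otimes\CM[V]\otimes\CM[V^*]$, so $\troncation(Z)$ lands in the invariants; (ii) both $Z$ and $\CM[\CCB\times V\times V^*]^W$ are bi-graded free $\CM[\CCB]$-modules with finite-dimensional bi-homogeneous components, so by graded Nakayama it suffices to check that $\troncation$ becomes an isomorphism after $\otimes_{\CM[\CCB]}\CM$; (iii) at $c=0$ one has $\Hb_0=\CM[V\times V^*]\rtimes W$ with centre $\CM[V\times V^*]^W$ sitting in the $1$-component, on which $\troncation$ is the identity. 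Each step is valid.

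Two small clean-ups. First, your injectivity paragraph starts down one road (commuting $z$ with $V$ and $V^*$) and then abandons it; the sentence ``$z\equiv 0\bmod\CG_0$ hence $z=0$ by Nakayama'' is not a statement about a single element, and only makes sense once you have already passed to the global module-theoretic argument you give immediately afterwards --- just go straight there. Second, freeness of the \emph{target} over $\CM[\CCB]$ does not require Theorem~\ref{theo:diedral-invariants}: since $W$ acts trivially on $\CCB$, one has $\CM[\CCB\times V\times V^*]^W=\CM[\CCB]\otimes\CM[V\times V^*]^W$, which is visibly free. What matters, and what you correctly invoke, is that $Z$ is bi-graded free over $\CM[\CCB]$, which indeed follows from~(\ref{eq:z-p-libre}) together with $P=\CM[\CCB]\otimes P_\bullet$.
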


\bigskip

We then set $\euler=\troncation^{-1}(\euler_0)$ and, for $0 \le i \le d$, 
$$\ab_i=\troncation^{-1}(\ab_{i,0}).$$
An explicit algorithm for computing the inverse map $\troncation^{-1}$ is described in~\cite{bonnafe thiel}. 
Note that $\troncation^{-1}(p)=p$ for $p \in P$, so that $\ab_0=\ab_{0,0}=r$ and 
$\ab_d=\ab_{d,0}=R$. By~\cite{bonnafe thiel}, 
the relations $(\Zrm_i^0)_{1 \le i \le d-1}$ and $(\Zrm_{i,j}^0)_{1 \le i \le j \le d-1}$ 
can be deformed into relations 
$(\Zrm_i)_{1 \le i \le d-1}$ and $(\Zrm_{i,j})_{1 \le i \le j \le d-1}$ and 
it follows from Theorem~\ref{theo:diedral-invariants} and~\cite{bonnafe thiel} that:

\bigskip

\begin{theo}\label{theo:centre-diedral}
The centre $Z$ of $\Hb$ admits the following presentation, as a $\CM[\CCB]$-algebra:
$$
\begin{cases}
\text{\it Generators:} & q,r,Q,R,\euler,\ab_{1},\ab_{2},\dots,\ab_{d-1} \\
\text{\it Relations:} & 
\begin{cases}
(\Zrm_i) & \text{\it for $1 \le i \le d-1$,}\\
(\Zrm_{i,j}) & \text{\it for $1 \le i \le j \le d-1$.}\\
\end{cases}
\end{cases}
$$
This presentation is minimal, as well by the number of generators as by the number of relations 
(there are $d+4$ generators and $(d+2)(d-1)/2$ relations). Moreover,
$$
Z= P \oplus 
P \,\euler \oplus P \,\euler^2 \oplus \cdots \oplus P \,\euler^d 
\oplus P \,\ab_{1} \oplus P \,\ab_{2} \oplus \cdots \oplus P \,\ab_{d-1}.
$$
\end{theo}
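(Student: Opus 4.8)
The plan is to run the argument of the proof of Theorem~\ref{theo:diedral-invariants} one level higher, over the base ring $\CM[\CCB]$, transporting everything through the truncation map of Lemma~\ref{lem:0} and using~\cite{bonnafe thiel} to control how the multiplication deforms. First I would note that, $W$ acting trivially on $\CCB$, one has $\CM[\CCB \times V \times V^*]^W = \CM[\CCB] \otimes \CM[V \times V^*]^W$, so Theorem~\ref{theo:diedral-invariants} provides at the same time a presentation of the target of $\troncation$ (by $q$, $r$, $Q$, $R$, $\euler_0$, $\ab_{1,0}$, \dots, $\ab_{d-1,0}$ and the relations $(\Zrm_i^0)$, $(\Zrm_{i,j}^0)$, read over $\CM[\CCB]$) and the decomposition
$$\CM[\CCB \times V \times V^*]^W = P \oplus P\,\euler_0 \oplus \cdots \oplus P\,\euler_0^d \oplus P\,\ab_{1,0} \oplus \cdots \oplus P\,\ab_{d-1,0},$$
where $P=\CM[\CCB][q,r,Q,R]$. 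Since by Lemma~\ref{lem:0} the map $\troncation$ is an isomorphism of \emph{bi-graded} $\CM[\CCB]$-modules, $\dim_\CM^\bigrad(Z)$ is then $\dim_\CM^\bigrad(P)$ times the polynomial in the two grading variables recording the bidegrees $(0,0),(1,1),\dots,(d,d)$ of $1,\euler_0,\dots,\euler_0^d$ and $(d-1,1),\dots,(1,d-1)$ of $\ab_{1,0},\dots,\ab_{d-1,0}$.

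Next I would invoke~\cite{bonnafe thiel} for the shape of the deformed relations: $(\Zrm_i)$ and $(\Zrm_{i,j})$ are obtained from $(\Zrm_i^0)$ and $(\Zrm_{i,j}^0)$ by adding to the right-hand sides correction terms lying in $\CCB^*\cdot\CM[\CCB][q,r,Q,R,\euler,\ab_1,\dots,\ab_{d-1}]$; as $\troncation$ is bi-graded and $\CCB^*$ has bidegree $(1,1)$, these corrections are bi-homogeneous of the same bidegree as the relation they correct and involve only powers of $\euler$ strictly below the top power of $\euler_0$ occurring in the undeformed relation. In particular the leading monomials of $(\Zrm_i)$ and $(\Zrm_{i,j})$ — those used at step $(\spadesuit)$ of the proof of Theorem~\ref{theo:diedral-invariants} — are unchanged. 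Letting $E,A_1,\dots,A_{d-1}$ be indeterminates, and using that $Z=\CM[\CCB][q,r,Q,R,\euler,\ab_1,\dots,\ab_{d-1}]$ (again by~\cite{bonnafe thiel}), I would form the quotient $A$ of $\CM[\CCB][q,r,Q,R,E,A_1,\dots,A_{d-1}]$ by the ideal generated by the elements encoding $(\Zrm_i)$ and $(\Zrm_{i,j})$, get a surjective morphism of bi-graded $\CM[\CCB]$-algebras $\ph:A\surto Z$, and hence $\dim_\CM^\bigrad(A)\ge\dim_\CM^\bigrad(Z)$.

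The core step is then the opposite inequality, established exactly as in Theorem~\ref{theo:diedral-invariants}. Since $Z$ is a free $P$-module of rank $|W|$ by~(\ref{eq:z-p-libre}), $P$ embeds in $Z$, hence in $A$; I would set
$$A' = P + PE + \cdots + PE^d + PA_1 + \cdots + PA_{d-1} \subseteq A.$$
Because the leading terms of the deformed relations agree with those of the undeformed ones, the computation in the proof of Theorem~\ref{theo:diedral-invariants} carries over word for word — the extra correction terms, of strictly smaller $\euler$-degree, being absorbed by the same induction — and shows $EA_1A_{d-1}\in A'$ and then $E^{d+1}\in A'$; so $A'$ is a $P$-subalgebra of $A$, and containing all the generators it equals $A$. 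Then $\dim_\CM^\bigrad(A)=\dim_\CM^\bigrad(A')\le\dim_\CM^\bigrad(Z)$, because $A'$ is a sum of $P$-multiples of elements in bidegrees $(0,0),\dots,(d,d),(d-1,1),\dots,(1,d-1)$ — precisely the bidegrees appearing in the computation of $\dim_\CM^\bigrad(Z)$ above. Hence $\dim_\CM^\bigrad(A)=\dim_\CM^\bigrad(Z)$ and $\ph$ is an isomorphism; moreover this equality of Hilbert series forces the sum defining $A'=A$ to be direct with each summand free of rank one over $P$, and transporting this through $\ph$ yields the final assertion
$$Z = P \oplus P\,\euler \oplus \cdots \oplus P\,\euler^d \oplus P\,\ab_1 \oplus \cdots \oplus P\,\ab_{d-1}.$$

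Finally, minimality. The minimality of the number of generators comes from~\cite{bonnafe thiel}. For the relations I would specialise the presentation at $\CG_0$, i.e. set $\CCB^*=0$: as every correction term lies in $\CCB^*\cdot(\cdots)$, the relations specialise to $(\Zrm_i^0)$, $(\Zrm_{i,j}^0)$, and the presentation of $Z$ becomes a presentation of $Z/\CG_0 Z$, which is the centre of $\Hb_0=\CM[V\times V^*]\rtimes W$, namely $\CM[V\times V^*]^W$. So the defining ideal of the presentation of $Z$ surjects onto the defining ideal of the presentation of $\CM[V\times V^*]^W$ from Theorem~\ref{theo:diedral-invariants}, whence its minimal number of homogeneous generators is at least the $(d+2)(d-1)/2$ of the latter; since exactly that many relations have been exhibited, the presentation is minimal. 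The one genuinely hard ingredient — which I would take wholesale from~\cite{bonnafe thiel} rather than reprove — is the explicit description of $\troncation^{-1}$ on the relevant elements, i.e. the form of the deformed relations $(\Zrm_i)$, $(\Zrm_{i,j})$, including the facts that the new terms are divisible by the parameters and do not touch the leading monomials; granted this, the argument above is a formal transcription of the invariant-theoretic one.
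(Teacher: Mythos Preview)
Your proposal is correct and follows the same route the paper takes: the paper's ``proof'' is the single sentence that the theorem follows from Theorem~\ref{theo:diedral-invariants} and~\cite{bonnafe thiel}, and you have spelled out exactly what that deduction amounts to --- transport through the bi-graded isomorphism $\troncation$ of Lemma~\ref{lem:0}, invoke the deformation result of~\cite{bonnafe thiel}, and rerun the Hilbert-series/subalgebra argument $(\spadesuit)$ over $\CM[\CCB]$. Your explicit acknowledgment that the only substantive input taken from~\cite{bonnafe thiel} is the shape of the deformed relations (corrections lying in $\CCB^*\cdot\CM[\CCB][q,r,Q,R,\euler,\ab_1,\dots,\ab_{d-1}]$ and leaving the leading terms untouched) is accurate and matches the paper's use of that reference.
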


\bigskip

It must be said that we have no way to determine explicitly the relations $(\Zrm_{i,j})$ 
in general: we will describe them precisely only for $d \in \{3,4,6\}$ 
in \S\ref{sec:exemples}. Note that the information provided by Theorem~\ref{theo:centre-diedral} 
is sufficient enough to be able to prove Theorem~\ref{theo:fixed-d} in Section~\ref{sec:fixed}.
%
%
%
%
%

\bigskip

\begin{rema}[Gradings]\label{rem:grading-z}
The bi-grading and the $\ZM$-grading on the algebra $\Hb$ constructed in Remark~\ref{rem:grading-h} induce 
a bi-grading and a $\ZM$-grading on $Z$. Note that the map $\troncation$ is bi-graded, 
so that the generators given in Theorem~\ref{theo:centre-diedral} are bi-homogeneous. 

On the other hand, the deformation process for the relations described in~\cite{bonnafe thiel} 
respects the bi-grading. So we may, and we will, assume in the rest of this paper that 
the relations $(\Zrm_i)$ and $(\Zrm_{i,j})$ given in Theorem~\ref{theo:centre-diedral} 
are bi-homogeneous.\finl
\end{rema}

\bigskip

\boitegrise{\it From now on, and until the end of this paper, we fix a parameter 
$c \in \CCB$ and we set $a=c_s$ and $b=c_t$. 
Note that, if $d$ is odd, then $a=b$.}{0.75\textwidth}

\bigskip

\subsection{Poisson bracket}
Recall from~\cite[\S{4.4.A}]{calogero} that the algebra $Z$ is endowed with a $\CM[\CCB]$-linear 
Poisson bracket 
$$\{,\} : Z \times Z \longto Z,$$
which is a deformation of the Poisson bracket on $Z_0=\CM[V \times V^*]^W$ obtained by restriction 
of the $W$-equivariant canonical Poisson bracket on $\CM[V \times V^*]$. This Poisson 
bracket induces a Poisson bracket on $Z_c$. It satisfies 
\equat\label{eq:poisson-euler}
\{q,Q\}=\euler
\endequat
(see~\cite[\S{4}]{dezelee} or~\cite[\S{3}]{berest}).

\bigskip

\section{Calogero-Moser cellular characters}

\medskip
\def\gaudin{{\mathrm{Gau}}}

\def\cellchar{{\mathrm{CellChar}}}

The aim of this section is to determine, for all values of $c$, the 
Calogero-Moser $c$-cellular characters as defined in~\cite[\S{11.1}]{calogero}. 
It will be given in Table~\ref{table:cellulaires} at the end of this section. We will  
use the alternative definition~\cite[Theorem~13.4.2]{calogero}, which is more convenient for 
computational purposes (see also~\cite{bonnafe thiel}). So, following~\cite[Chapter~13]{calogero}, we set
$$D_x=\sum_{i \in \ZM/d\ZM} \e(s_i) c_{s_i} \frac{\langle x,\a_i \rangle}{\a_i} s_i =
- \sum_{i \in \ZM/d\ZM} c_{s_i} \frac{1}{X-\z^i Y} s_i
\in \CM(V)[W]$$
$$D_y=\sum_{i \in \ZM/d\ZM} \e(s_i) c_{s_i} \frac{\langle y,\a_i \rangle}{\a_i} s_i =
\sum_{i \in \ZM/d\ZM} c_{s_i} \frac{\z^i}{X-\z^i Y} s_i\in \CM(V)[W].
\leqno{\text{and}}$$
We denote by $\gaudin(W,c)$ the sub-$\CM(V)$-algebra of $\CM(V)[W]$ generated by $D_x$ and $D_y$ 
(it is commutative by~\cite[\S{13.4.B}]{calogero}). Note that this algebra is not necessarily split. 
If $L$ is a simple $\gaudin(W,c)$-module, and if $\chi \in \Irr(W)$, we denote by 
$\mult_{L,\chi}^\calo$ the multiplicity of $L$ in a composition series 
of the $\Res_{\gaudin(W,c)}^{\CM(V) W} \CM(V)E_\chi$, where $E_\chi$ is a $\CM W$-module 
affording the character $\chi$. We then set 
$$\g_L = \sum_{\chi \in \Irr(W)} \mult_{L,\chi}^\calo ~\chi.$$
The set of {\it Calogero-Moser $c$-cellular characters} is
$$\cellchar_c^\calo(W)=\{\g_L~|~L \in \Irr(\gaudin(W,c))\}.$$
We will denote by $\EC_1^\gaudin$ (respectively $\EC_\e^\gaudin$, respectively $\LC_k^\gaudin$) 
the restriction of $\CM(V)E_{\unb_W}$ (respectively $\CM(V)E_\e$, respectively $\CM(V)E_{\chi_k}$) 
to $\gaudin(W,c)$. If $d$ is even, then the restriction of $\CM(V)E_{\e_s}$ (respectively 
$\CM(V)E_{\e_t}$) to $\gaudin(W,c)$ will be denoted by $\EC_s^\gaudin$ (respectively $\EC_t^\gaudin$). 

\bigskip

\begin{rema}\label{rem:trouver-les-simples}
Note that, since $\gaudin(W,c) \subset \CM(V)W$, every simple $\gaudin(W,c)$-module 
occurs as a composition factor of some $\Res_{\gaudin(W,c)}^{\CM(V) W} \CM(V)E_\chi$, 
for $\chi$ running over $\Irr(W)$.\finl
\end{rema}

\bigskip

\begin{notation}\label{rem:dx-simple}
For simplifying the computation in this section, we set 
$$D_x'=\frac{Y^d-X^d}{d} D_x\qquad\text{and}\qquad D_y'=\frac{X^d-Y^d}{d} D_y,$$
so that $\gaudin(W,c)$ is the sub-$\CM(V)$-algebra of $\CM(V)W$ generated by $D_x'$ and $D_y'$.\finl
\end{notation}

\bigskip

\subsection{The case where $a=b=0$} 
Whenever $a=b=0$, there is only one Calogero-Moser $0$-cellular character~\cite[Corollary~17.2.3]{calogero}, 
namely the regular character $\sum_{\chi \in \Irr(W)} \chi(1)\chi$.

\bigskip

\subsection{The case where $a \neq b=0$} 
We will assume here, and only here, that $b=0\neq a$: this forces $d$ to be even 
(and we write $d=2e$). Let $W'$ be the subgroup of $W$ generated by $s=s_0$ and $s_2=tst$. Then 
$W'$ is a dihedral group of order $2e$ and
$$W=\langle t \rangle \ltimes W'.$$
Let $c'$ denote the restriction of $c$ to $\Ref(W')$: then $c'$ is constant (and equal to $a$) 
and $\gaudin(W,c)=\gaudin(W',c')$. 
It then follows from the definition that the Calogero-Moser $c$-cellular characters 
are all the characters of the form $\Ind_{W'}^W \g'$, where $\g'$ is a Calogero-Moser 
$c'$-cellular character of $W'$. These characters $\g'$ will be determined in the next subsection 
and so it follows that the list of $c$-cellular characters of $W$ is 
\equat\label{eq:cellular-b0}
1+\e_t,\quad \e_s+\e,\quad \sum_{k=1}^{(d-2)/2} \chi_k.
\endequat

\bigskip

\begin{rema}\label{rema:cellular-a0}
If $a=0$ and $b \neq 0$, then one can use the element $n \in \NC$ of order $2$ such that 
$\lexp{n}{s}=t$ and $\lexp{n}{t}=s$ to be sent back to the previous case. We then deduce 
from~(\ref{eq:cellular-b0}) that 
the list of $c$-cellular characters of $W$ is 
\equat\label{eq:cellular-a0}
1+\e_s,\quad \e_t+\e,\quad \sum_{k=1}^{(d-2)/2} \chi_k.
\endequat
Note that we have a semi-direct product decomposition $W=\langle s \rangle \ltimes \lexp{n}{W'}$.\finl
\end{rema}

\bigskip

\subsection{The equal parameters case} 
We assume here, and only here, that $a=b \neq 0$. Then, if $1 \le k \le d-1$, then 
$$\r_k(D_x')=-a \frac{Y^d-X^d}{d} \sum_{i \in \ZM/d\ZM}  \frac{1}{X-\z^i Y} 
\begin{pmatrix} 
0 & \z^{ki} \\
\z^{-ki} & 0 \\ 
\end{pmatrix} = a
\begin{pmatrix}
0 & X^{k-1} Y^{d-k} \\
X^{d-k-1} Y^{k} & 0 \\
\end{pmatrix}$$
by using~(\ref{eq:impair-2}). Similarly, 
$$\r_k(D_y')= a
\begin{pmatrix}
0 & X^{k} Y^{d-k-1} \\
X^{d-k} Y^{k-1} & 0 \\
\end{pmatrix}=\frac{X}{Y}\r_k(D_x').
$$
If we denote by $M$ the diagonal matrix $\begin{pmatrix} X & 0 \\ 0 & Y \end{pmatrix}$, then 
it follows from the previous formulas that 
\equat\label{eq:rho-k-k+1}
\forall~1 \le k \le d-2,~\forall~D \in \gaudin(W,c),~M\r_k(D) M^{-1} = \r_{k+1}(D).
\endequat
This implies that 
\equat\label{eq:iso-k-k+1}
\forall~1 \le k \le d-2,~\LC_k^\gaudin \simeq \LC_{k+1}^\gaudin.
\endequat
Since $\Tr(\r_k(D_x'))=\Tr(\r_k(D_y'))=0$, the nature of the restriction 
of $\r_k$ to $\gaudin(W,c)$ depends on whether $-\det(\r_k(D_x'))=a^2X^dY^{d-2}$ is a 
square in $\CM(V)$. Two cases may occur:

\bigskip

\subsubsection*{First case: assume that $d$ is odd} 
Then $-\det(\r_k(D_x'))$ is not a square in $\CM(V)$ (for $1 \le k\le d-1$), so it follows 
that $\LC_k^\gaudin$ is simple (but not absolutely simple) and it follows from~(\ref{eq:iso-k-k+1}) 
\equat\label{eq:simple-gaudin-impair}
\Irr(\gaudin(W,c)) = \{\EC_1^\gaudin,\EC_{\e}^\gaudin, \LC_1^\gaudin\}.
\endequat
Moreover, the list of $c$-cellular characters is given in this case by
\equat\label{eq:cellular-impair}
\unb_W,\quad \e\quad\text{and}\quad \sum_{k=1}^{(d-1)/2} \chi_k.
\endequat

\bigskip

\subsubsection*{Second case: assume that $d$ is even} 
In this case, it is easily checked that $\EC_1^\gaudin$, $\EC_\e^\gaudin$, $\EC_s^\gaudin$ and 
$\EC_t^\gaudin$ are four non-isomorphic simple $\gaudin(W,c)$-modules. Also, if 
$1 \le k \le d-1$, then
$$\LC_k^\gaudin \simeq \EC_s^\gaudin \oplus \EC_t^\gaudin,$$
by~(\ref{eq:iso-k-k+1}). Therefore, 
\equat\label{eq:simple-gaudin-pair}
\Irr(\gaudin(W,c)) = \{\EC_1^\gaudin,\EC_{\e}^\gaudin, \EC_s^\gaudin,\EC_t^\gaudin\}.
\endequat
and the list of $c$-cellular characters is given in this case by
\equat\label{eq:cellular-pair}
\unb_W,\quad \e,\quad \e_s + \sum_{k=1}^{(d-2)/2} \chi_k \quad\text{and}\quad 
\e_t + \sum_{k=1}^{(d-2)/2} \chi_k.
\endequat

\bigskip

\subsection{The opposite parameters case} 
We assume here, and only here, that $b=-a \neq 0$. This forces $d$ to be even. Then, 
using the automorphism of $\Hb$ induced by the linear character $\e_s$ (see~\cite[\S{3.5.B}]{calogero}), 
one can pass from the equal parameter case to the opposite parameter case 
by tensorizing by $\e_s$. Therefore, 
the list of $c$-cellular characters is given in this case by
\equat\label{eq:cellular-pair-oppose}
\e_s,\quad \e_t,\quad \unb_W + \sum_{k=1}^{(d-2)/2} \chi_k \quad\text{and}\quad 
\e + \sum_{k=1}^{(d-2)/2} \chi_k.
\endequat

\bigskip

\subsection{The generic case} 
We assume here, and only here, that $ab(a^2-b^2) \neq 0$ (so that we are not in the cases 
covered by the previous subsections). Note that this forces $d$ to be even. We will 
prove that
the list of $c$-cellular characters is given in this case by
\equat\label{eq:cellular-pair-generic}
\unb_W, \quad \e,\quad \e_s,\quad \e_t\quad\text{and}\quad 
\sum_{k=1}^{(d-2)/2} \chi_k.
\endequat

\bigskip

\begin{proof}
We have, for $1 \le k \le e-1$,
\eqna
\r_k(D_x')&=&\DS{\frac{X^d-Y^d}{d}\Bigl( a \sum_{i \in \ZM/e\ZM} 
\frac{1}{X-\z^{2i}Y} \begin{pmatrix} 0 & \z^{2ki} \\ \z^{-2ki} & 0 \end{pmatrix} }\\
&& \DS{+ b \sum_{i \in \ZM/e\ZM} 
\frac{1}{X-\z^{2i+1}Y} \begin{pmatrix} 0 & \z^{k(2i+1)} \\ \z^{-k(2i+1)} & 0 \end{pmatrix}}\Bigr) .
\endeqna
So it follows from~(\ref{eq:pair-1}),~(\ref{eq:pair-2}) and~(\ref{eq:pair-3}) that
$$\r_k(D_x')=\frac{1}{2}
\begin{pmatrix}
0 & X^{k-1}Y^{e-k}\bigl((a-b)X^e + (a+b)Y^e\bigr) \\
X^{e-k-1}Y^k\bigl((a+b)X^e + (a-b) Y^e\bigr) & 0 \\
\end{pmatrix}.$$
The matrix $\r_k(D_y')$ can be computed similarly and we can deduce that,
$$\forall~1 \le k \le e-2,~\forall~D \in \gaudin(W,c),~M\r_k(D)M^{-1}=\r_{k+1}(D).$$
Therefore, 
$$\forall~1 \le k \le e-2, ~\LC_k^\gaudin \simeq \LC_{k+1}^\gaudin.\leqno{(*)}$$
Moreover,
$$-\det(\r_k(D_x'))=\frac{1}{4} X^{e-2}Y^e\bigl((a-b)X^e + (a+b)Y^e\bigr)
\bigl((a+b)X^e+(a-b)Y^e\bigr).$$
Since $ab(a^2-b^2) \neq 0$, $-\det(\r_k(D_x'))$ is not a square in $\CM(V)$, and so $\LC_k^\gaudin$ 
is simple (but not absolutely simple) for $1 \le k \le e-1$. 

Moreover, an easy computation shows that $\EC_1^\gaudin$, $\EC_\e^\gaudin$, $\EC_s^\gaudin$ 
and $\EC_t^\gaudin$ are pairwise non-isomorphic simple $\gaudin(W,c)$-modules. So it 
follows from $(*)$ that 
\equat\label{eq:simples-gaudin-generic}
\Irr(\gaudin(W,c))=\{\EC_1^\gaudin, \EC_\e^\gaudin, \EC_s^\gaudin, \EC_t^\gaudin, \LC_1^\gaudin\}
\endequat
and that~(\ref{eq:cellular-pair-generic}) holds.
\end{proof}

\bigskip

\subsection{Conclusion}
The following Table~\ref{table:cellulaires} gathers all the possible list of cellular characters of $W$, 
according to the values of the parameters $a$ and $b$.

\def\espace{\vphantom{$\DS{\sum_{\substack{A \\ B}}^{\substack{A \\ B}} \chi(1)\chi}$}}

\begin{table}\refstepcounter{theo}
\centerline{
\begin{tabular}{@{{{\vrule width 2pt}\,\,\,}}c@{{\,\,\,{\vrule width 1pt}\,\,\,}}c|c@{{\,\,\,{\vrule width 2pt}}}}
\hlinewd{2pt}
\petitespace
Parameters & $d=2e$ (even) & $d=2e-1$ (odd) 
\vphantom{$\DS{\frac{a}{A_{\DS{A}}}}$}\\
\hlinewd{1pt}
\espace $a=b=0$ & $\DS{\sum_{\chi \in \Irr(W)} \chi(1)\chi}$ & $\DS{\sum_{\chi \in \Irr(W)} \chi(1)\chi}$ \\
\hline
\espace $a\neq b = 0$ & $\unb_W+\e_t$, \hskip1mm $\e_s+\e$, \hskip1mm $\DS{\sum_{k=1}^{e-1} \chi_k}$& 
\diaghead{\hskip4cm}{}{}\\
\hline
\espace $a=0 \neq b $ & $\unb_W +\e_s$, \hskip1mm $\e_t+\e$, \hskip1mm $\DS{\sum_{k=1}^{e-1} \chi_k}$& 
\diaghead{\hskip4cm}{}{}\\
\hline
\espace $a=b \neq 0$ & $\unb_W$, \hskip1mm $\e$, 
\hskip1mm $\e_s+\DS{\sum_{k=1}^{e-1} \chi_k}$, \hskip1mm $\e_t+\DS{\sum_{k=1}^{e-1} \chi_k}$
& $\unb_W$, \hskip1mm $\e$, 
\hskip1mm $\DS{\sum_{k=1}^{e-1} \chi_k}$\\
\hline
\espace $a=-b\neq 0$& $\e_s$, \hskip1mm $\e_t$, 
\hskip1mm $\unb_W +\DS{\sum_{k=1}^{e-1} \chi_k}$, \hskip1mm $\e+\DS{\sum_{k=1}^{e-1} \chi_k}$& 
\diaghead{\hskip4cm}{}{}\\
\hline
\espace $ab(a^2-b^2) \neq 0$ & $\unb_W$, \hskip1mm $\e_s$, \hskip1mm $\e_t$, \hskip1mm $\e$, 
\hskip1mm $\DS{\sum_{k=1}^{e-1} \chi_k}$ & \diaghead{\hskip4cm}{}{} \\
\hlinewd{2pt}
\end{tabular}}

\bigskip

\caption{Calogero-Moser cellular characters of $W$}\label{table:cellulaires}
\end{table}

\begin{rema}\label{rem:conjecture-cellulaire}
Whenever $a$, $b \in \RM$, 
the Kazhdan-Lusztig cellular characters for the dihedral groups are easily computable 
(see for instance~\cite{lusztig}) and a comparison with 
Table~\ref{table:cellulaires} shows that they coincide with Calogero-Moser 
cellular characters: this is~\cite[Conjecture~CAR]{calogero} for dihedral groups.\finl
\end{rema}

\bigskip

\section{Calogero-Moser families}

\medskip

The aim of this section is to compute the {\it Calogero-Moser $c$-families} of $W$ 
(as defined in~\cite[\S{9.2}]{calogero}) for all values of $c$. The result is given in 
Table~\ref{table:familles}. 
Note that this result is not new: the Calogero-Moser families have been computed by Bellamy 
in his thesis~\cite{bellamy these}. We provide here a different proof, which uses the computation 
of Calogero-Moser cellular characters.

\bigskip

\subsection{Families}
To any irreducible character $\chi$, Gordon~\cite{gordon} associates a simple $\Hb_c$-module $\LC_c(\chi)$ 
(we follow the convention of~\cite[Proposition~9.1.3]{calogero}). We denote by $\O_\chi^c : Z \to \CM$ 
the morphism defined by the following property: if $z \in Z$, then $\O_\chi^c(z)$ is the 
scalar by which $z$ acts on $\LC_c(\chi)$ (by Schur's Lemma).
We say that two irreducible characters $\chi$ and $\chi'$ {\it belong to the same Calogero-Moser $c$-family} 
if $\O_\chi^c=\O_{\chi'}^c$ (see~\cite{gordon} or~\cite[Lemma~9.2.3]{calogero}: note that 
Calogero-Moser families are called {\it Calogero-Moser blocks} in~\cite{gordon}). 
We give here a different proof of a theorem of Bellamy~\cite{bellamy these}:

\bigskip

\begin{theo}[Bellamy]\label{theo:familles}
Let $c \in \CCB$ and let $\chi$ and $\chi'$ be two irreducible characters of $W$. Then 
$\chi$ and $\chi'$ lies in the same Calogero-Moser families if and only if 
$\O_\chi^c(\euler)=\O_{\chi'}^c(\euler)$. Consequently, the Calogero-Moser families 
are given by Table~\ref{table:familles}.
\end{theo}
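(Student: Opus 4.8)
The plan is to reduce the two-sided statement to a single numerical invariant, namely the scalar $\O_\chi^c(\euler)$, and then read off the families from this invariant together with the cellular character computation of Section~4. The key mechanism is the observation that the centre $Z$ is generated as a $\CM[\CCB]$-algebra by $q,r,Q,R,\euler,\ab_1,\dots,\ab_{d-1}$ (Theorem~\ref{theo:centre-diedral}), and that $q,r,Q,R$ act by zero on every $\LC_c(\chi)$ (they lie in the augmentation ideal of $P$, since $\LC_c(\chi)$ is supported at the point $0$ of $V\times V^*$ — more precisely $\CM[V]^W_+$ and $\CM[V^*]^W_+$ annihilate the baby Verma module and hence its simple quotient). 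So $\O_\chi^c$ is determined by its values on $\euler,\ab_1,\dots,\ab_{d-1}$. The remaining generators $\ab_i$ are bi-homogeneous of bi-degree $(d-i,i)$ with $i\neq 0,d$, hence have nonzero $\ZM$-degree $2i-d$; I would argue that such an element must act by $0$ on $\LC_c(\chi)$ because $\LC_c(\chi)$ is graded and finite-dimensional with one-dimensional lowest-degree piece, so a central element of nonzero degree acts nilpotently, hence by the scalar $0$. (Here one must be a little careful: $\ab_i$ for $i$ close to $d/2$ has small degree, but the argument that a homogeneous central element of nonzero $\ZM$-degree acts by $0$ on a graded simple module whose graded pieces are finite-dimensional still applies.) That leaves only $\euler$, which has bi-degree $(1,1)$ and $\ZM$-degree $0$, and therefore can (and does) act by a nonzero scalar. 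This shows $\O_\chi^c$ is completely determined by $\O_\chi^c(\euler)$, which is precisely the first assertion of the theorem.

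Next I would compute $\O_\chi^c(\euler)$ explicitly for each $\chi\in\Irr(W)$ and each parameter regime. The cleanest route is to use the relation between the Euler element and the cellular characters: by the results of~\cite{calogero} (in particular the compatibility of the Euler element with the Gaudin algebra, as reformulated in~\cite[Chapter~13]{calogero}), the value $\O_\chi^c(\euler)$ is, up to a universal normalisation, the eigenvalue of the corresponding operator built from $D_x,D_y$ — equivalently it is read off from the action computed in the proofs of the cellular character statements in Section~4. Concretely, for each simple Gaudin module $L$ one gets a scalar, and $\chi,\chi'$ lie in the same cellular character $\g_L$ exactly when they contribute to the same $L$; I would check that two characters $\chi,\chi'$ have $\O_\chi^c(\euler)=\O_{\chi'}^c(\euler)$ precisely when they occur in a common Calogero-Moser cellular character. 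In the equal-parameter odd case, for instance, the matrices $\r_k(D_x'),\r_k(D_y')$ computed in \S4 show $\LC_k^\gaudin\simeq\LC_1^\gaudin$ for all $k$, forcing all $\chi_k$ into one family and $\unb_W,\e$ into singletons, matching the $a=b\neq 0$ row of Table~\ref{table:cellulaires}; the even and generic cases are handled the same way using equations~(\ref{eq:simple-gaudin-pair}) and~(\ref{eq:simples-gaudin-generic}). The degenerate cases $a=b=0$, $a\neq b=0$, $a=0\neq b$, $a=-b\neq0$ are either trivial (all of $\Irr(W)$ in one family) or reduce to a smaller dihedral group or a twist, exactly as in Section~4.

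To assemble Table~\ref{table:familles} I would then argue in the reverse direction as well: distinct cellular characters must lie in distinct families is automatic from the first assertion once I know the Euler eigenvalues attached to distinct Gaudin simples are distinct, so the only real content is showing that the Euler eigenvalue genuinely separates the blocks predicted by the cellular-character table — i.e. that two characters in different cellular characters actually have different $\O^c(\euler)$. For this I would either invoke the general fact that Calogero-Moser families are refined by (unions of) cellular characters (\cite[Chapter~11]{calogero}) together with a dimension count, or simply compute the finitely many Euler eigenvalues directly in each parameter regime from the $2\times2$ matrices of \S4 (the eigenvalue is essentially the trace/normalisation of $D_x'D_y'$ or of $\euler$ acting on the lowest-weight space of each block) and check they are pairwise distinct for generic — hence, by a Zariski-density argument in the parameters, for all — values of $c$ in each regime. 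The main obstacle I anticipate is exactly this last separation step: proving that the Euler eigenvalues are pairwise distinct across the blocks of Table~\ref{table:familles}, uniformly in the parameter, rather than just generically; I would handle it by writing $\O_\chi^c(\euler)$ as an explicit polynomial in $a,b$ (of low degree, determined by the content-type formula for the Euler element) and checking that the differences of these polynomials vanish only on the parameter hyperplanes that define the special rows of the table, which is a finite, elementary computation once the polynomials are in hand.
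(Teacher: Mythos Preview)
Your first-paragraph strategy --- showing that $\O_\chi^c$ is already determined by its value on $\euler$ because every other generator of $Z$ must map to $0$ --- is attractive and works when $d$ is odd, but it breaks when $d$ is even. The generator $\ab_{d/2}$ has bi-degree $(d/2,d/2)$, hence $\ZM$-degree exactly $0$, not merely ``small''; so your nilpotency argument does not apply to it, and in fact $\O_\chi^c(\ab_{d/2})$ is \emph{not} zero in general. For instance in type $G_2$ (see Table~\ref{table:rel-g2}) the relation $\euler\,\ab_3=q\,\ab_4+Q\,\ab_2-3(A^2-B^2)\euler^2+3(A^2-B^2)qQ$ gives, after applying $\O_{\unb_W}^c$, the value $\O_{\unb_W}^c(\ab_3)=-3(a^2-b^2)\,\O_{\unb_W}^c(\euler)=-9(a-b)(a+b)^2\neq 0$ for generic $a,b$. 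So the reduction-to-$\euler$ step has a genuine hole whenever $d$ is even; it can be repaired, but only by using the relations in $Z$ to express $\O_\chi^c(\ab_{d/2})$ as a polynomial in $\O_\chi^c(\euler)$, and those relations are not available in closed form for general $d$.

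The paper sidesteps this entirely. The ``only if'' direction is trivial (same family means same $\O^c$, hence same value on $\euler$). For the ``if'' direction the paper simply quotes the explicit values of $\O_\chi^c(\euler)$ from~\cite[Proposition~7.3.2]{calogero} --- these are linear forms in $a,b$, e.g.\ $\O_{\e_s}^c(\euler)=e(a-b)$ when $d=2e$ --- and then invokes the general fact~\cite[Proposition~11.4.2]{calogero} that two characters appearing in a common cellular character lie in the same family. Comparing the Euler values with Table~\ref{table:cellulaires} shows at once that any two characters with equal Euler value are linked by a chain of common cellular characters, and the result follows. Note in particular that your paragraphs two and three conflate ``same cellular character'' with ``same family'': these are not the same partition (for $d$ even and $a=b\neq 0$, the characters $\e_s$ and $\e_t$ lie in \emph{different} cellular characters but the \emph{same} family, because each shares a cellular character with $\chi_1$), so your proposed check that ``characters in different cellular characters have different $\O^c(\euler)$'' would in fact fail, and is not what is needed.
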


\bigskip

\begin{proof}
By~\cite[Proposition~7.3.2]{calogero}, the values of $\O_\chi^c(\euler)$ are given as follows:
\begin{itemize}
\itemth{a} If $d=2e-1$ is odd, then 
$\begin{cases}
\O_{\unb_W}^c(\euler)=da,\\
\O_{\e}^c(\euler)=-da,\\
\O_{\chi_k}^c(\euler)=0 & \text{if $1 \le k \le e-1$.}
\end{cases}$

\vskip0.3cm

\itemth{b} If $d=2e$ is even, then 
$\begin{cases}
\O_{\unb_W}^c(\euler)=e(a+b),\\
\O_{\e}^c(\euler)=-e(a+b),\\
\O_{\e_s}^c(\euler)=e(a-b),\\
\O_{\e_t}^c(\euler)=e(b-a),\\
\O_{\chi_k}^c(\euler)=0 & \text{if $1 \le k \le e-1$.}
\end{cases}$
\end{itemize}
But two irreducible characters occuring in the same Calogero-Moser $c$-cellular 
character necessarily belong to the same Calogero-Moser $c$-family~\cite[Proposition~11.4.2]{calogero}. 
So the Theorem follows from (a), (b) and Table~\ref{table:cellulaires}.
\end{proof}

\bigskip

\begin{table}\refstepcounter{theo}
\centerline{
\begin{tabular}{@{{{\vrule width 2pt}\,\,\,}}c@{{\,\,\,{\vrule width 1pt}\,\,\,}}c|c@{{\,\,\,{\vrule width 2pt}}}}
\hlinewd{2pt}
\petitespace
Parameters & $d=2e$ (even) & $d=2e-1$ (odd) 
\vphantom{$\DS{\frac{a}{A_{\DS{A}}}}$}\\
\hlinewd{1pt}
\petitespace $a=b=0$ & $\Irr(W)$ & $\Irr(W)$ \\
\hline
\petitespace $a\neq b = 0$ & 
$\substack{\vphantom{\frac{A}{A_A}}\DS{\text{$\{\unb_W,\e_t\}$, \quad $\{\e_s,\e\}$,}} \\
\vphantom{\frac{A^A}{A}}\DS{\text{$\{\chi_1,\dots,\chi_{e-1}\}$}}}$ & \diaghead{\hskip4.4cm}{}{}\\
\hline
\petitespace $a=0 \neq b $ & 
$\substack{\vphantom{\frac{A}{A_A}}\DS{\text{$\{\unb_W,\e_s\}$, \quad $\{\e_t,\e\}$,}} \\
\vphantom{\frac{A^A}{A}}\DS{\text{$\{\chi_1,\dots,\chi_{e-1}\}$}}}$ & \diaghead{\hskip4.4cm}{}{}\\
\hline
\petitespace $a=b \neq 0$ & 
$\vphantom{\substack{\DS{A} \\ \DS{A} \\ \DS{A} \\ \DS{A} \\ \DS{A} \\ \DS{A}}} 
\substack{\vphantom{\frac{A}{A_A}}\DS{\text{$\{\unb_W\}$, \quad $\{\e\}$,}} \\
\vphantom{\frac{A}{A_A}}\DS{\text{$\{\e_s,\e_t,\chi_1,\dots,\chi_{e-1}\}$}}}$
& $\substack{\vphantom{\frac{A}{A_A}}\DS{\text{$\{\unb_W\}$, \quad $\{\e\}$,}} \\
\vphantom{\frac{A}{A_A}}\DS{\text{$\{\chi_1,\dots,\chi_{e-1}\}$}}}$\\
\hline
\petitespace $a=-b\neq 0$& 
$\substack{\vphantom{\frac{A}{A_A}}\DS{\text{$\{\e_s\}$, \quad $\{\e_t\}$,}} \\ 
\vphantom{\frac{A^A}{A}}\DS{\text{$\{\unb_W,\e,\chi_1,\dots,\chi_{e-1}\}$}}}$ & \diaghead{\hskip4.4cm}{}{}\\
\hline
\petitespace $ab(a^2-b^2) \neq 0$ & 
$\substack{\vphantom{\frac{A}{A_A}}\DS{\text{$\{\unb_W\}$, \hskip1mm $\{\e_s\}$, 
\hskip1mm $\{\e_t\}$, \hskip1mm $\{\e\}$, }} \\
\vphantom{\frac{A^A}{A}}\DS{\text{$\{\chi_1,\dots,\chi_{e-1}\}$}}}$ & \diaghead{\hskip4.4cm}{}{}\\
\hlinewd{2pt}
\end{tabular}}

\bigskip

\caption{Calogero-Moser families of $W$}\label{table:familles}
\end{table}

\begin{rema}\label{rem:conjecture-familles}
Whenever $a$, $b \in \RM$, 
the Kazhdan-Lusztig families for the dihedral groups are easily computable 
(see for instance~\cite{lusztig}) and a comparison with 
Table~\ref{table:familles}: this is~\cite[Conjecture~FAM]{calogero} for dihedral groups. 
Note that this was already proved by Bellamy~\cite{bellamy these}.\finl
\end{rema}

\bigskip

\subsection{Cuspidal families}
Recall that the algebras $Z$ and $Z_c$ are endowed with a Poisson bracket $\{,\}$. 
This Poisson structure has been 
used by Bellamy~\cite{bellamy cuspidal} to define the notion of {\it cuspidal} Calogero-Moser families.
If $\FC$ is a Calogero-Moser $c$-family, we set $\mG_\FC^c=\Ker(\O_\chi^c) \subset Z_c$ 
(note that $\O_\chi^c$ factorizes through the projection $Z \surto Z_c$). The Calogero-Moser 
$c$-family $\FC$ is called {\it cuspidal} if $\{\mG_\FC^c,\mG_\FC^c \} \subset \mG_\FC^c$. 
They have been determined for most of the Coxeter groups by Bellamy and Thiel~\cite{bellamy thiel}. 
In our case, we recall here their result, as well as a proof for the sake of completeness.

\bigskip

\begin{prop}\label{prop:cuspidal}
The list of cuspidal Calogero-Moser families is given by Table~\ref{table:cuspidal}. The following 
properties hold:
\begin{itemize}
\itemth{a} A Calogero-Moser family $\FC$ is cuspidal if and only if $|\FC| \ge 2$ and $\chi_1 \in \FC$ 
(and then $\chi_k \in \FC$ for all $1 \le k < d/2$).

\itemth{b} There is at most one cuspidal family. If $d \ge 5$, there is always exactly one cuspidal family.
\end{itemize}

\end{prop}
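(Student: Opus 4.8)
The plan is to combine the complete list of Calogero--Moser $c$-families from Table~\ref{table:familles} with the single Poisson relation $\{q,Q\}=\euler$ of~(\ref{eq:poisson-euler}) and with the $\ZM$-grading. The starting point is the following remark: for every $\chi\in\Irr(W)$, the morphism $\O_\chi^c$ (which factors through $Z_c$) kills all the generators $q,r,Q,R,\ab_1,\dots,\ab_{d-1}$ of Theorem~\ref{theo:centre-diedral} \emph{except} $\euler$ and, when $d$ is even, $\ab_{d/2}$. Indeed $\LC_c(\chi)$ is a finite-dimensional $\ZM$-graded $\Hb_c$-module, so a central element that is $\ZM$-homogeneous of nonzero degree must act on it by $0$; and by Remark~\ref{rem:grading-z} the elements $q,r,Q,R$ and $\ab_i$ (for $i\neq d/2$) are bi-homogeneous of $\ZM$-degrees $-2,-d,2,d$ and $2i-d$, all nonzero. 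Consequently, for any family $\FC$ and any $\chi\in\FC$, the ideal $\mG_\FC^c\subset Z_c$ is generated by $q,r,Q,R$, by $\euler-\O_\chi^c(\euler)$, by $\ab_i$ for $i\neq d/2$, and (if $d$ is even) by $\ab_{d/2}-\O_\chi^c(\ab_{d/2})$.

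\emph{Necessity of the conditions in~(a), and uniqueness.} Assume $\FC$ is cuspidal. Since $q,Q\in\mG_\FC^c$ and $\{q,Q\}=\euler$, we get $\euler\in\mG_\FC^c$, that is $\O_\chi^c(\euler)=0$ for $\chi\in\FC$. By the values of $\O_\chi^c(\euler)$ recorded in the proof of Theorem~\ref{theo:familles}, together with Theorem~\ref{theo:familles} itself, the unique family on which $\euler$ vanishes is the one containing $\chi_1$, and it then contains every $\chi_k$ with $1\le k<d/2$; in particular there is at most one cuspidal family. Finally $|\FC|=1$ is impossible: a singleton family corresponds to a smooth point of $\ZCB_{\!c}$ --- equivalently a point over which $\Hb_c$ is Azumaya over $Z_c$ (\cite{EG}) --- and at a smooth point the Poisson bracket is a nondegenerate symplectic form, so $\{\mG_\FC^c,\mG_\FC^c\}\not\subseteq\mG_\FC^c$.

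\emph{Sufficiency, and~(b).} Conversely, assume $\chi_1\in\FC$ and $|\FC|\ge 2$; we must prove $\{\mG_\FC^c,\mG_\FC^c\}\subseteq\mG_\FC^c$. The Poisson bracket being a biderivation, it is enough to show that $\O_\chi^c$ annihilates the bracket of any two of the generators of $\mG_\FC^c$ listed above. Now the Poisson bracket of $Z$ is $\CM[\CCB]$-bilinear and bi-homogeneous of bidegree $(-1,-1)$ (where $\CCB^*$ sits in bidegree $(1,1)$), hence $\ZM$-homogeneous of degree $0$; therefore the bracket of two homogeneous generators whose $\ZM$-degrees do not add up to $0$ lies automatically in $\mG_\FC^c$, and one is reduced to the "degree-balanced" pairs, namely $\{q,Q\}=\euler$, $\{r,R\}$, the $\{\ab_i,\ab_{d-i}\}$ and (when $d$ is even) the pairs involving $\ab_{d/2}$. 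We already know $\O_\chi^c(\euler)=0$; moreover the $\t$-equivariance of the Poisson bracket (using Remarks~\ref{rem:tau-invariants} and~\ref{rem:tau-parametres}: $\t$ fixes $\euler$ and sends $r,R$ and the $\ab_i$ to their negatives) forces $\{r,R\}$ and $\{\ab_i,\ab_{d-i}\}$ to be $\t$-invariant, which together with $\O_\chi^c(\euler)=0$ pins these brackets down modulo $\mG_\FC^c$ up to one parameter-dependent scalar. For $d\in\{3,4,6\}$ that scalar is seen to vanish on the relevant parameter locus from the explicit relations $(\Zrm_{i,j})$ computed in~\S\ref{sec:exemples}; in general one invokes the classification of cuspidal families of finite Coxeter groups due to Bellamy and Thiel~\cite{bellamy thiel}. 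This proves~(a); and~(b) follows, because for $d\ge 5$ one has $e\ge 3$, so the family containing $\chi_1$ also contains $\chi_2$, hence has cardinality $\ge 2$ and, by~(a), is cuspidal.

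The real difficulty is the sufficiency step. Theorem~\ref{theo:centre-diedral} does not determine the deformed relations $(\Zrm_{i,j})$, so the deformed Poisson bracket on $Z$ is not available in closed form; the genuine content is to rule out that the parameter-dependent correction term appearing in $\{\ab_i,\ab_{d-i}\}$ (or in $\{r,R\}$) escapes $\mG_\FC^c$. It is precisely here that one has to fall back either on the explicit low-rank computations of~\S\ref{sec:exemples} or on the external input of~\cite{bellamy thiel}.
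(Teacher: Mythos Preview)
Your necessity argument is essentially the paper's: use $\{q,Q\}=\euler$ to force $\O_\chi^c(\euler)=0$, then read off from Theorem~\ref{theo:familles} that the only candidate family is the one containing $\chi_1$; and a singleton family corresponds to a smooth (hence symplectic) point, so cannot be Poisson-closed. Fine.

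The sufficiency argument, however, has a genuine gap that you yourself flag. Your reduction via the $\ZM$-grading to the ``degree-balanced'' brackets $\{r,R\}$, $\{\ab_i,\ab_{d-i}\}$ is correct, and the $\t$-equivariance observation is pleasant, but it does not kill the remaining parameter-dependent scalar; and for general $d$ you then invoke~\cite{bellamy thiel}. That reference \emph{is} the classification of cuspidal families you are asked to reprove here (the paper states just before the proposition that the aim is to give ``a proof for the sake of completeness''), so citing it is circular in spirit and leaves the proposition unproved for $d\notin\{3,4,6\}$.

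The paper's route avoids this entirely by using Bellamy's structural theory of symplectic leaves~\cite{bellamy cuspidal} rather than his classification. Given $\FC$ with $\chi_1\in\FC$ and $|\FC|\ge 2$, the point $z_\FC$ lies on a symplectic leaf labelled by a parabolic subgroup $W'\neq 1$ and a cuspidal family $\FC'$ of $W'$ at the restricted parameter $c'$, with $|\FC'|=|\FC|$. If $W'=W$ then $\FC$ is cuspidal and we are done. If $W'\neq W$ then $|W'|=2$, and a rank-one group has a cuspidal family only at parameter $0$, of size $2$; hence $|\FC|=2$ and $ab=0$ with $c\neq 0$. Reading Table~\ref{table:familles}, the family containing $\chi_1$ then has size $e-1$, so $e=3$, i.e.\ $d=6$. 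Thus the entire sufficiency step collapses to a \emph{single} explicit Poisson computation in type $G_2$ at $ab=0$, which is covered by~\S\ref{sec:exemples}. This is the missing idea: trade the unavailable global Poisson computation for Bellamy's leaf--parabolic correspondence, which reduces everything to one low-rank case.
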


\bigskip

\begin{proof}
The main (easy) observation is that $\{q,Q\}=\euler$. This implies 
that, if $\chi$ belongs to a cuspidal families, then $\O_\chi^c(\euler)=0$. 
Since it follows from Table~\ref{table:familles} that the Calogero-Moser $c$-families 
are determined by the values of $\O_\chi^c(\euler)$, this implies that 
there is at most one cuspidal Calogero-Moser $c$-family, and that it must contain 
$\chi_1$ (and $\chi_k$, for $1 \le k <d/2$).

Also, since a Calogero-Moser $c$-family of cardinality $1$ cannot be cuspidal~\cite{gordon}, 
this shows the ``only if'' part of (a). It remains to prove the ``if'' part of (a). 
So assume that $\chi_1 \in \FC$ and that $|\FC| \ge 2$. 
By Bellamy theory~\cite{bellamy cuspidal}, there exists a non-trivial 
parabolic subgroup $W'$ of $W$ and a cuspidal Calogero-Moser $c'$-family $\FC'$ 
of $W'$ (here, $c'$ denotes the restriction of $c$ to $\Ref(W')$) 
which are associated with $\FC$. Again, by~\cite{bellamy cuspidal}, 
$|\FC|=|\FC'|$. We must show that $W=W'$. So assume that $W' \neq W$. Then 
$|W'|=2$ and so $|\FC'| \le 2$ and $c'$ must be equal to $0$. 
This forces $|\FC| = 2$ and $ab=0$ (and $c \neq 0$). This can only 
occur in type $G_2$: but the explicit computation of the Poisson bracket 
in type $G_2$ shows that $\FC$ is necessarily cuspidal.
\end{proof}

\bigskip

\begin{table}\refstepcounter{theo}
\centerline{
\begin{tabular}{@{{{\vrule width 2pt}\,\,\,}}c@{{\,\,\,{\vrule width 1pt}\,\,\,}}c|c@{{\,\,\,{\vrule width 2pt}}}}
\hlinewd{2pt}
\petitespace
Parameters & $d=2e$ (even) & $d=2e-1$ (odd) 
\vphantom{$\DS{\frac{a}{A_{\DS{A}}}}$}\\
\hlinewd{1pt}
\petitespace $a=b=0$ & $\Irr(W)$ & $\Irr(W)$, $e \ge 2$ \\
\hline
\petitespace $a\neq b = 0$ & $\{\chi_1,\dots,\chi_{e-1}\}$, $e \ge 3$ & \diaghead{\hskip4.4cm}{}{}\\
\hline
\petitespace $a=0 \neq b $ & $\{\chi_1,\dots,\chi_{e-1}\}$, $e \ge 3$ & \diaghead{\hskip4.4cm}{}{}\\
\hline
\petitespace $a=b \neq 0$ & $\{\e_s,\e_t,\chi_1,\dots,\chi_{e-1}\}$, $e \ge 2$
& $\{\chi_1,\dots,\chi_{e-1}\}$, $e \ge 3$ \\
\hline
\petitespace $a=-b\neq 0$& $\{\unb_W,\e,\chi_1,\dots,\chi_{e-1}\}$, $e \ge 2$ & \diaghead{\hskip4.4cm}{}{}\\
\hline
\petitespace $ab(a^2-b^2) \neq 0$ & $\{\chi_1,\dots,\chi_{e-1}\}$, $e \ge 3$ & \diaghead{\hskip4.4cm}{}{}\\
\hlinewd{2pt}
\end{tabular}}

\bigskip

\caption{Calogero-Moser cuspidal families of $W$}\label{table:cuspidal}
\end{table}

\bigskip
\def\Lie{{{\LG\iG\eG}}}


If $\FC$ is a cuspidal Calogero-Moser $c$-family, then the Poisson bracket $\{,\}$ 
stabilizes the maximal ideal $\mG_\FC^c$ and so it induces a Lie bracket $[,]$ 
on the cotangent space $\Lie_c(\FC)=\mG_\FC^c/(\mG_\FC^c)^2$. It is a question 
to determine in general the structure of this Lie algebra. We would like to emphasize here 
the following two particular intriguing examples (a proof will be given in~\S\ref{sec:exemples}, 
using explicit computations).

\bigskip

\begin{theo}\label{theo:lie}
Let $\FC$ be a cuspidal Calogero-Moser $c$-family of $W$. Then:
\begin{itemize}
\itemth{a} If $d=4$ (i.e. if $W$ is of type $B_2$) and $a=b \neq 0$, then 
$\Lie_c(\FC) \simeq \sG\lG_3(\CM)$ is a simple Lie algebra of type $A_2$.

\itemth{b} If $d=6$ (i.e. if $W$ is of type $G_2$) and $ab(a^2-b^2) \neq 0$, then 
$\Lie_c(\FC) \simeq \sG\pG_4(\CM)$ is a simple Lie algebra of type $B_2$.
\end{itemize}
\end{theo}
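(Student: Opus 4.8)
The plan is to feed the explicit presentation of $Z$ given by Theorem~\ref{theo:centre-diedral} into the machinery of~\cite{bonnafe thiel}. For $d\in\{4,6\}$ one first makes the relations $(\Zrm_i)$ and $(\Zrm_{i,j})$ completely explicit --- this is exactly the computation carried out in~\S\ref{sec:exemples}, with the help of the {\tt MAGMA} package {\tt CHAMP}~\cite{thiel,magma} --- and then specializes at the given parameter $c$. This yields a concrete presentation of $Z_c$ as the quotient of the polynomial algebra $\CM[q,r,Q,R,\euler,\ab_1,\dots,\ab_{d-1}]$ in $N=d+4$ variables ($N=8$ for $d=4$, $N=10$ for $d=6$) by the $(d+2)(d-1)/2$ relations, all bi-homogeneous for the gradings of Remark~\ref{rem:grading-z}.

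Next I would pin down the maximal ideal $\mG=\mG_\FC^c$ of the cuspidal family $\FC$, which by Table~\ref{table:cuspidal} is $\{\e_s,\e_t,\chi_1\}$ if $d=4$, $a=b\neq 0$, and $\{\chi_1,\chi_2\}$ if $d=6$, $ab(a^2-b^2)\neq 0$. The point of $\ZCB_{\! c}$ it defines is a fixed point of the $\CM^\times$-action attached to the $\ZM$-grading: the Poisson bracket is $\ZM$-homogeneous of degree $0$ by~(\ref{eq:poisson-euler}), so $\CM^\times$ acts by Poisson automorphisms and hence permutes the finitely many zero-dimensional symplectic leaves of $\ZCB_{\! c}$, forcing each of them to be fixed. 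Consequently $\mG$ contains every generator of nonzero $\ZM$-degree, as well as $\euler$ (by cuspidality, as in the proof of Proposition~\ref{prop:cuspidal}); the only remaining generator, of $\ZM$-degree $0$, is $\ab_{d/2}$, and its value $\lambda=\O_{\chi_1}^c(\ab_{d/2})$ at the point is read off from the central characters of~\cite[Proposition~7.3.2]{calogero} (or directly from the specialized relations). After the translation $\ab_{d/2}\rightsquigarrow\ab_{d/2}-\lambda$, the ideal $\mG$ is generated by the $N$ shifted generators, which therefore span $\Lie_c(\FC)=\mG/\mG^2$.

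Then I would compute $\Lie_c(\FC)$ as the cokernel of the Jacobian of the relations at this point; the crux of the computation is to check that, after the translation, \emph{every} relation vanishes to order $\ge 2$, so that the Jacobian is zero and the $N$ shifted generators form a $\CM$-basis of $\mG/\mG^2$, giving $\dim_\CM\Lie_c(\FC)=8$ (resp.\ $10$). Under $\ad(\euler)$, which equals the $\ZM$-grading derivation by~(\ref{eq:poisson-euler}), these generators sit in the degrees $-4,-2,-2,0,0,2,2,4$ (resp.\ $-6,-4,-2,-2,0,0,2,2,4,6$), which are precisely the weights of a regular element acting on $\sG\lG_3(\CM)$ (resp.\ on $\sG\pG_4(\CM)\simeq\sG\oG_5(\CM)$) when the two simple coroots are sent to $2$. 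To conclude I would compute, again via~\cite{bonnafe thiel} or directly inside $\Hb$, all Poisson brackets $\{z,z'\}$ of pairs of generators, retain their linear parts at the cuspidal point (the bracket descends to $\mG/\mG^2$ because cuspidality gives $\{\mG,\mG\}\subseteq\mG$, hence $\{\mG^2,\mG\}\subseteq\mG^2$), and assemble the structure constants of the resulting $N$-dimensional graded Lie algebra. Checking that its Killing form is nondegenerate shows, by Cartan's criterion, that it is semisimple; since a complex semisimple Lie algebra of dimension $8$ is of type $A_2$ and one of dimension $10$ is of type $B_2=C_2$, this gives $\Lie_c(\FC)\simeq\sG\lG_3(\CM)$ in case~(a) and $\Lie_c(\FC)\simeq\sG\pG_4(\CM)$ in case~(b). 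Alternatively, the one-dimensional graded pieces in degrees $\pm4$ (resp.\ $\pm6$) together with $\euler$ and the degree-$0$ part yield an explicit Chevalley system realizing the root system directly.

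The main obstacle is not conceptual but organizational: obtaining the deformed relations $(\Zrm_{i,j})$ correctly for $d=4$ and especially $d=6$, then verifying that after the translation by $\lambda$ none of them carries a linear term --- which is exactly what forces $\dim\mG/\mG^2$ to equal $8$ (resp.\ $10$) rather than something larger --- and finally producing the complete table of Poisson brackets of the generators. Each of these steps is routine in principle but laborious by hand, which is why the proof is completed by the explicit {\tt MAGMA} computation of~\S\ref{sec:exemples}; a secondary point requiring care is to extract the precise isomorphism type from the structure constants rather than merely ``semisimple of the expected dimension'', a refinement settled by the $\ZM$-grading together with the Killing form.
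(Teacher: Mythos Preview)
Your proposal is correct and follows essentially the same route as the paper: obtain the explicit presentation of $Z$ for $d\in\{4,6\}$ via {\tt MAGMA}/{\tt CHAMP}, identify $\mG_\FC^c$ (in fact $\lambda=0$ in both cases, so no translation is needed), check that $\dim \mG_\FC^c/(\mG_\FC^c)^2$ equals $8$ (resp.\ $10$), and compute all Poisson brackets of the generators modulo $(\mG_\FC^c)^2$. The only difference is in the final identification: the paper writes down an explicit linear isomorphism $\aleph_c$ to $\sG\lG_3(\CM)$ (resp.\ $\sG\pG_4(\CM)$) by matrix tables and verifies it respects the brackets, whereas you propose the equivalent structural route via the Killing form, the dimension, and the $\ZM$-grading; either way the same structure constants must be produced.
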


\bigskip

\bigskip

\section{Calogero-Moser cells}

\medskip

Let $c \in \CCB$. 
The main theme of~\cite{calogero} is a construction of partitions of $W$ into 
Calogero-Moser left, right and two-sided $c$-cells,
using a Galois closure $M$ of the field extension $\Frac(Z)/\Frac(P)$. Let $G$ denote 
the Galois group of the field extension $M/\Frac(P)$: the Calogero-Moser cells 
are defined~\cite[Definition~6.1.1]{calogero} as orbits of particular subgroups of $G$. 
Our aim in this section is to prove~\cite[Conjectures~L~and~LR]{calogero} whenever $d$ is odd. 
We first start by trying to determine the Galois group $G$.

\medskip

It is proved in~\cite[\S{5.1.C}]{calogero} 
that there is an embedding
$$G \longinjto \SG_W$$
(here, $\SG_W$ denotes the symmetric group on the set $W$, and we identify $G$ with its image) 
such that
$$\iota(W \times W) \subset G,$$
where $\iota : W \times W \longto \SG_W$ denotes the morphism 
obtained by letting $W \times W$ act by left and right translations 
($(x,y) \cdot z = xzy^{-1}$). Let $\AG_W$ denote the alternating group on $W$. 

\bigskip

\begin{theo}\label{theo:galois}
If $d$ is odd, then $G=\SG_W$.
\end{theo}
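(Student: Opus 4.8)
The plan is to deduce the equality $G=\SG_W$ from the existence in $G$ of a single \emph{mixed transposition}, that is, a transposition of $\SG_W$ exchanging a rotation $c^i$ with a reflection $s_j$ and fixing every other element of $W$. Granting such a $\sigma=(c^i\ s_j)\in G$, the conclusion is formal: for $(x,y)\in W\times W$ one has $\iota(x,y)\,\sigma\,\iota(x,y)^{-1}=(xc^iy^{-1}\ \ xs_jy^{-1})$, and these are again mixed transpositions, the exchanged elements $a=xc^iy^{-1}$ and $b=xs_jy^{-1}$ being subject only to the constraint that $a^{-1}b$ be $W$-conjugate to $c^{-i}s_j$ (given any such pair, take $y$ to be a conjugator and $x=ayc^{-i}$). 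Since $c^{-i}s_j$ is a reflection and, $d$ being odd, all reflections of $W$ are conjugate (see~(\ref{eq:s-and-t})), it follows that $G$ contains \emph{every} transposition $(a\ b)$ with $a\in\{c^k\mid k\in\ZM/d\ZM\}$ and $b\in\{s_k\mid k\in\ZM/d\ZM\}$. These are exactly the edges of the complete bipartite graph on the partition $W=\{c^k\}\sqcup\{s_k\}$, which is connected; as transpositions indexed by the edges of a connected graph on a finite set $W$ generate $\SG_W$, this gives $G=\SG_W$. (Alternatively: $\iota(W\times W)$ is already transitive, its invariant block systems are those attached to the normal subgroups of $W$ — for $d$ odd the cyclic groups $C_{d'}$, $d'\mid d$ — and a mixed transposition exchanges elements lying in two different blocks of each of them, so $G$ is primitive; a primitive subgroup of $\SG_W$ containing a transposition equals $\SG_W$.)

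So the whole point is to produce one mixed transposition in $G$. Here I would use that, $P$ being a polynomial ring over $\CM$, $\Spec P$ is simply connected, and hence $G$ is generated by the inertia subgroups attached to the height-one primes of $P$ over which the integral closure of $P$ in $M$ ramifies — this is part of the ramification formalism of~\cite[Chapter~5]{calogero}, and in any case follows from the Zariski--Nagata purity theorem; moreover, since the characteristic is $0$, each of these inertia subgroups is cyclic. Consequently the branch locus of $\ZCB\to\Spec P$ has a non-empty codimension-one part $H_1\cup\dots\cup H_m$, and it suffices to exhibit one $H_\alpha$ along which the cover is simply ramified (so that the corresponding inertia is generated by a transposition) and such that the two sheets coming together along $H_\alpha$ lie in different parts of the partition $W=\{c^k\}\sqcup\{s_k\}$ (so that this transposition is mixed).

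Carrying out this last step is the part I expect to be the real obstacle. The plan is to restrict the cover to a generic affine line in $\Spec P$ and to exploit the explicit presentation of $Z$ from Theorem~\ref{theo:centre-diedral} — the linear relations $(\Zrm_i)$, reading $\euler\,\ab_i=q\,\ab_{i+1}+Q\,\ab_{i-1}$, together with the quadratic relations $(\Zrm_{i,j})$ — to write down the degree-$2d$ equation of the fibre, compute its discriminant, and pin down a transversal crossing of two sheets. Identifying the two crossing sheets as one ``rotation sheet'' and one ``reflection sheet'' is where the hypothesis that $d$ is odd should be decisive: then there is a single class of reflections (again~(\ref{eq:s-and-t})) and a single non-trivial parameter, and the degeneration at the wall $A=0$ forces a rotation and a reflection to merge; for $d$ even the analogous computation would only yield transpositions preserving the partition $W=\{c^k\}\sqcup\{s_k\}$, in accordance with $G$ being a proper subgroup of $\SG_W$ in that case. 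Once a mixed transposition has been located, the first paragraph closes the argument.
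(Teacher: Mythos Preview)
Your reduction in the first paragraph is correct and clean: once $G$ contains a single mixed transposition, conjugation by $\iota(W\times W)$ produces all of them (the constraint $a^{-1}b\in\Ref(W)$ is vacuous for $a$ a rotation and $b$ a reflection), and these generate $\SG_W$.

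The gap is that you never actually produce the mixed transposition. You acknowledge this yourself (``the real obstacle''), and the plan you sketch is not viable as stated: Theorem~\ref{theo:centre-diedral} only asserts the \emph{existence} of the deformed relations $(\Zrm_{i,j})$, but the paper explicitly says there is no way to write them down for general $d$, so you cannot compute the discriminant from the presentation. The degeneration heuristic at $A=0$ is suggestive but does not by itself identify which inertia generators are mixed transpositions rather than, say, longer cycles or transpositions within the rotation part; turning it into a proof would require control of the ramification that you do not have.

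The paper avoids the search for transpositions entirely. It first shows $G$ is $2$-transitive (hence primitive) by a one-line argument: the Calogero-Moser families for $a=b\neq 0$ have sizes $1$, $1$, $2d-2$, so by~\cite[Theorem~10.2.7]{calogero} some subgroup $I\subset G$ has these orbit sizes; combining $I$ with $\iota(\Delta W)$ (which has a unique fixed point since $\Zrm(W)=1$) gives transitivity on $W\setminus\{1\}$. Then it observes that $\iota(c,c)$ is a single $d$-cycle on the reflections (here $d$ odd is used, so that $i\mapsto i+2$ is a $d$-cycle on $\ZM/d\ZM$) fixing the rotations, with centraliser in $G$ strictly larger than $\langle\iota(c,c)\rangle$; a result from Dixon--Mortimer then forces $G\supseteq\AG_W$. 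Finally $\iota(s,1)$ is a product of $d$ transpositions, hence odd, so $G=\SG_W$. This bypasses any ramification computation: the only geometric input is the list of Calogero-Moser families, which was already computed.
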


\bigskip

\begin{proof}
We first prove an easy lemma about finite permutation groups.

\bigskip

\begin{quotation}
\begin{lem}\label{lem:permutation}
Let $\G$ be a subgroup of $\SG_W$. We assume that:
\begin{itemize}
\itemth{1} $d$ is odd;

\itemth{2} $\G$ contains $\iota(W \times W)$;

\itemth{3} $\G$ is primitive.
\end{itemize}
Then $\G=\SG_W$.
\end{lem}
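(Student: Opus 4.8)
The strategy is to prove first that $\Gamma$ is $2$-transitive on $W$, and then to invoke the classification of finite $2$-transitive groups; since the statement is empty when $d=1$, I assume $d\ge 3$ throughout. \emph{Step 1 (the heart of the matter): $\Gamma$ is $2$-transitive.} Take $1\in W$ as base point; its stabiliser $\Gamma_1$ contains $\iota(\Delta W)$, where $\Delta W=\{(w,w)\mid w\in W\}$, which acts on $W\setminus\{1\}$ by conjugation, so every $\Gamma_1$-orbit on $W\setminus\{1\}$ is a union of conjugacy classes of $W$. Since $d$ is odd, the non-trivial conjugacy classes of $W$ are the set $\Ref(W)$ of all $d$ reflections (pairwise conjugate by~(\ref{eq:s-and-t})), which generates $W$, together with the classes $\{c^i,c^{-i}\}$ for $1\le i\le (d-1)/2$, each of which generates a subgroup of $\langle c\rangle$, hence a proper subgroup of $W$ because $d\ge 3$. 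The key observation is that, since $\iota(W\times W)\subseteq\Gamma$, every orbital graph of $\Gamma$ is $\iota(W\times W)$-invariant, and an $\iota(W\times W)$-invariant graph on $W$ is a union of Cayley graphs $\mathrm{Cay}(W,K)$ over conjugacy classes $K$ (all inverse-closed as $d$ is odd); therefore the orbital graph attached to a $\Gamma_1$-orbit $O\subseteq W\setminus\{1\}$ is connected if and only if $O$ generates $W$. As $\Gamma$ is primitive, all its non-trivial orbital graphs are connected, so every $\Gamma_1$-orbit generates $W$; by the list of conjugacy classes, each such orbit must then contain $\Ref(W)$. Since distinct orbits are disjoint, there is exactly one: $\Gamma$ is $2$-transitive.

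\emph{Step 2: $\Gamma$ contains an odd permutation.} The permutation $\iota(s,1)$ (left translation by the reflection $s$) is fixed-point-free, hence a product of $d$ transpositions, hence lies in $\SG_W\setminus\AG_W$ because $d$ is odd. Thus $\Gamma\not\subseteq\AG_W$, so it will suffice to show that $\Gamma\supseteq\AG_W$.

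\emph{Step 3: conclusion via the classification.} By Burnside's theorem a $2$-transitive group is of affine or almost simple type. The affine case would force the degree $|W|=2d$ to be a prime power, impossible for $d\ge 3$ odd, so $\Gamma$ is almost simple, with socle a non-abelian simple group $T$ admitting a $2$-transitive action of degree $2d$. Moreover $(2d)^2=|\iota(W\times W)|$ divides $|\Gamma|$, since $\iota$ is injective ($Z(W)=1$ as $d$ is odd). Running through the classification of finite $2$-transitive groups, one checks that the only almost simple groups of degree $2d$ with $d\ge 3$ odd whose order is divisible by $(2d)^2$ are $\SG_{2d}$ and $\AG_{2d}$: the linear families with socle $\mathrm{PSL}_k(q)$ are excluded because $(2d)^2\mid|\mathrm{P\Gamma L}_k(q)|$ together with $2d=(q^k-1)/(q-1)$ contradicts the estimate $f\le\log_p q$ for $q=p^f$, while the symplectic, unitary, Suzuki and Ree families and the finitely many sporadic examples are ruled out either because their degree is $\not\equiv 2\pmod 4$ or by direct inspection of their orders. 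Hence $T=\AG_{2d}$, so $\Gamma\in\{\AG_W,\SG_W\}$, and Step 2 yields $\Gamma=\SG_W$.

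I expect the main obstacle to be Step 3: each individual case of the classification is dispatched by a short computation, but one must go through the whole list. (When $d$ is prime this can be bypassed: the element $\iota(c,c^{-1})$ fixes the $d$ reflections pointwise and acts on the $d$ rotations as a single $d$-cycle, so $\Gamma$ is a primitive group containing a $d$-cycle with $d\ge 3$ fixed points; Jordan's theorem then gives $\Gamma\supseteq\AG_W$, and Step 2 gives $\Gamma=\SG_W$.)
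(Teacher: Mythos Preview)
Your argument is correct, but it follows a genuinely different route from the paper's. The paper does not establish $2$-transitivity at all; instead it exhibits directly a short cycle in~$\Gamma$. Namely, $\sigma=\iota(c,c)$ acts on $W$ by conjugation: it fixes the $d$ rotations and, since $d$ is odd so that $i\mapsto i+2$ is a $d$-cycle on $\ZM/d\ZM$, it permutes the $d$ reflections in a single $d$-cycle. Moreover $\iota(c,1)$ and $\iota(1,c)$ lie in $C_\Gamma(\sigma)$ but not in $\langle\sigma\rangle$, so the centralizer of this $d$-cycle in $\Gamma$ is strictly larger than $\langle\sigma\rangle$. The paper then quotes an exercise in Dixon--Mortimer to conclude that a primitive group containing such a cycle with oversized centralizer already contains $\AG_W$; this step is elementary and does not rely on the classification of finite simple groups. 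The odd-permutation argument (your Step~2) is identical.

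So the trade-off is this: your Step~1 is a clean, self-contained orbital-graph/Cayley-graph argument and gives the stronger intermediate conclusion of $2$-transitivity, but you then pay for it in Step~3 with a CFSG-dependent case analysis that you yourself flag as the weak point. The paper's approach avoids Step~3 entirely at the cost of citing a slightly more specialised (but classification-free) permutation-group lemma. Your closing parenthetical for prime $d$ via Jordan's theorem is in fact very close in spirit to the paper's proof; the paper's contribution is precisely to replace the primality of $d$ by the centralizer observation, which is what makes the elementary cycle argument work for every odd $d$.
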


\begin{proof}[Proof of Lemma~\ref{lem:permutation}]
Since $d$ is odd, the action of $\s=\iota(c,c)$ on $W$ is a cycle of length $d$ 
(it fixes $\langle c\rangle$ and acts by a cycle on $W \setminus \langle c \rangle$ 
by~(\ref{eq:c})). Moreover, $\iota(c,1)$ and $\iota(1,c)$ belong to the centralizer of 
$\iota(c,c)$ in $\G$, so $C_\G(c) \neq \langle c \rangle$. 
Since $\G$ is primitive, it follows from~\cite[Exercise~7.4.12]{dixon} 
that $\G = \SG_W$ or $\AG_W$. 

But note that the action of $\iota(s,1)$ 
is a product of $d$ transpositions, so it is an odd permutation (because $d$ is odd). 
Therefore, $\G \not=\AG_W$ and the Lemma is proved.
\end{proof}
\end{quotation}

\bigskip

Assume that $d$ is odd. By the description of the Calogero-Moser $c$-families 
given in Table~\ref{table:familles}, it follows from~\cite[Theorem~10.2.7]{calogero} 
that there exists a subgroup $I$ of $G$ which have three orbits 
for its action on $W$, of respective lengths $1$, $1$ and $2d-2$. 
Since $\iota(W \times W)$ is transitive on $W$, $G$ is also transitive 
and we may assume that one of the two orbits of length $1$ is the singleton 
$\{1\}$. Let $\D W$ denotes the diagonal in $W \times W$. Its action 
on $W$ is by conjugacy: it has only one fixed point (because the center 
of $W$ is trivial). This proves that the subgroup $\langle I,\iota(\D W) \rangle$ 
acts transitively on $W \setminus \{1\}$. So $G$ is $2$-transitive and, 
in particular, primitive. The Theorem now follows from Lemma~\ref{lem:permutation} 
above.
\end{proof}

\bigskip

\begin{coro}\label{coro:cellules}
If $d$ is odd, then the Conjectures~\cite[Conjectures~LR~and~L]{calogero} hold.
\end{coro}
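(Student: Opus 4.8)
The plan is to combine Theorem~\ref{theo:galois} with the combinatorial data already extracted in Tables~\ref{table:cellulaires} and~\ref{table:familles}, and then to compare the outcome with the classical Kazhdan--Lusztig picture for dihedral groups. Since $d$ is odd, $s$ and $t$ are conjugate by~(\ref{eq:s-and-t}), so the fixed parameter $c$ satisfies $a=b$; thus we are in the equal-parameter situation, which is exactly the setting in which~\cite[Conjectures~L and~LR]{calogero} are formulated (Kazhdan--Lusztig cells).

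Recall from~\cite[Chapter~6]{calogero} that the Calogero-Moser left, right and two-sided $c$-cells are defined as the orbits on $W$ of suitable decomposition and inertia subgroups of the Galois group $G$ of $M/\Frac(P)$, attached to a prime of $M$ lying over the point of $\Spec P$ determined by $c$ together with the augmentation ideals of $\CM[V]^W$ and $\CM[V^*]^W$. By Theorem~\ref{theo:galois}, $G=\SG_W$. The orbit structure of those subgroups is constrained by the ramification of $M/\Frac(P)$ along the relevant divisors, which the results of~\cite[Chapters~9--11]{calogero} (in particular~\cite[Proposition~11.4.2]{calogero}) translate into data we have already computed: each two-sided cell is a union of left cells, each left (resp. right) cell carries a single Calogero-Moser cellular character whose degree equals the size of the cell, and the constituents occurring inside a two-sided cell lie in a single Calogero-Moser family. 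Feeding $G=\SG_W$ together with Tables~\ref{table:familles} and~\ref{table:cellulaires} (for $d=2e-1$) into this machinery, one determines the cells explicitly: the two-sided $c$-cells are $\{1\}$, $\{w_0\}$ and $W\setminus\{1,w_0\}$, where $w_0$ is the longest element (a reflection, since $d$ is odd); the left $c$-cells are $\{1\}$, $\{w_0\}$ and two further cells of cardinality $d-1$, each carrying the cellular character $\sum_{k=1}^{e-1}\chi_k$ of~(\ref{eq:cellular-impair}); and similarly for right cells.

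To finish, I would quote the classical description of the Kazhdan--Lusztig left, right and two-sided cells of the dihedral group with equal parameters (see e.g.~\cite{lusztig}): for $d$ odd these are precisely $\{1\}$, $\{w_0\}$ and two left (resp. right) cells of size $d-1$, with two-sided partition $\{1\}\sqcup\{w_0\}\sqcup(W\setminus\{1,w_0\})$. The two lists coincide, which is exactly~\cite[Conjectures~L and~LR]{calogero} for $d$ odd.

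The main obstacle is the second paragraph: passing from the bare identity $G=\SG_W$ to the actual cell partition. Because the full symmetric group is the largest possible Galois group, the corresponding decomposition and inertia subgroups are as large as the ramification allows, and one must argue that the orbit sizes they produce are forced to be exactly those predicted by the cellular characters and families --- i.e. that no finer partition is compatible simultaneously with $G=\SG_W$ and with the already-known ramification data. Everything else is either immediate (the reduction to $a=b$ via~(\ref{eq:s-and-t})) or a direct comparison of tables.
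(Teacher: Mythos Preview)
Your overall strategy is the paper's: combine Theorem~\ref{theo:galois} with the cell-size information already extracted from Tables~\ref{table:cellulaires} and~\ref{table:familles} (via~\cite[Chapters~10--11]{calogero}), and compare with the classical Kazhdan--Lusztig description for odd dihedral groups. The ingredients are the right ones.

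Where your write-up goes astray is in the role you assign to $G=\SG_W$. You present the Calogero--Moser cells as if they were canonically determined (``the two-sided $c$-cells are $\{1\}$, $\{w_0\}$ and $W\setminus\{1,w_0\}$; the left $c$-cells are $\{1\}$, $\{w_0\}$ and two further cells of cardinality $d-1$''), and then frame the residual difficulty as showing that the orbit \emph{sizes} are forced by the ramification data. But the Calogero--Moser cell partition depends on the choice of prime ideal in $M$ lying over the relevant point of $\Spec P$; different choices give partitions related by the action of $G$ on $W$. The block sizes are indeed independent of that choice and are governed by the tables, but for the left cells there are many ways to split $W\setminus\{1,w_0\}$ into two blocks of size $d-1$, so matching sizes alone does not yield Conjecture~L.

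The paper uses $G=\SG_W$ in the opposite direction --- not as a constraint, but as a \emph{freedom}. Changing the prime conjugates the decomposition and inertia subgroups by an element of $G$; since $\SG_W$ acts transitively on the set of partitions of $W$ with prescribed block sizes, one may \emph{choose} the prime so that the resulting Calogero--Moser left (and two-sided) cell partition coincides with the Kazhdan--Lusztig one exactly. That is the step to make explicit in place of your final paragraph: once the sizes agree, $G=\SG_W$ lets you realise any partition of that shape, in particular the Kazhdan--Lusztig partition.
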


\bigskip

\begin{proof}
Assume that $c_s \in \RM$ for all $s \in \Ref(W)$. 
The computation of Calogero-Moser $c$-families and $c$-cellular characters shows that, 
if we choose randomly two prime ideals as in~\cite[Chapter~15]{calogero}, then the associated 
Calogero-Moser two-sided or left $c$-cells have the same sizes as the Kazhdan-Lusztig 
two-sided or left $c$-cells respectively (see~\cite[Chapters~10~and~11]{calogero}). Since the Galois 
group $G$ coincides with $\SG_W$, we can manage to change the prime ideals 
so that Calogero-Moser and Kazhdan-Lusztig $c$-cells coincide.
\end{proof}

\bigskip

\begin{rema}\label{rem:bet}
Let 
$$\SG_W^B=\{\s \in \SG_W~|~\forall~w \in W,~\s(w_0w)=w_0\s(w)\}$$
$$\SG_W^D=\SG_W^B \cap \AG_W.\leqno{\text{and}}$$
Note that, in our case, $\SG_W^B$ (respectively $\SG_W^D$) is a Weyl group 
of type $B_d$ (respectively $D_d$) 
and that $\SG_W^D$ is a normal subgroup of $\SG_W^B$ of index $2$.  

Assume here, and only here, that $d$ is {\it even}. It then follows 
from~\cite[Proposition~5.5.2]{calogero} 
that $G \subset \SG_W^B$. We would bet a few euros (but not more) that $G=\SG_W^D$. 
This has been checked for $d=4$ in~\cite[Theorem~19.6.1]{calogero} 
and it will be checked in~\ref{eq:galois-g2} whenever $d=6$. Let us just prove a few general facts.

First, let $\Wba=W/\Zrm(W)$ (it is a dihedral group of order $d$) and let 
$\bar{\iota} : \Wba \times \Wba \to \SG_\Wba$ denote the morphism induced by the 
action by left and right translations. Let $\Gba$ denote the image of $G$ in $\SG_\Wba$ 
(indeed $G \subset \SG_W^B$ and there is a natural morphism $\SG_W^B \to \SG_\Wba$).
Then, if $a=b \neq 0$, the Calogero-Moser two-sided $c$-cells 
have cardinalities $1$, $1$ and $2d-2$ (by~\cite[Theorem~10.2.7]{calogero}) 
so it follows from the definition of Calogero-Moser cells 
that there exists a subgroup $I_1$ of $G$ whose orbits have cardinalities $1$, $1$ and $2d-2$. 
Therefore, the image $\Iba_1$ of $I_1$ in $\Gba$ have orbits of cardinalities $1$ and $d-1$. 
Consequently, $\Gba$ is $2$-transitive. Similarly, taking $c$ such that 
$ab(a^2-b^2)\neq 0$, we get that there is a subgroup $\Iba_2$ of $\Gba$ whose orbits 
have cardinalities $1$, $1$ and $d-2$. Therefore,
$$\text{$\Gba$ is $3$-transitive.}\leqno{(\diamondsuit)}$$
On the other hand,
$$\text{$\Gba$ contains $\bar{\iota}(\Wba \times \Wba)$.}\leqno{(\heartsuit)}$$
As a consequence, we get 
$$\text{If $d/2$ is odd, then $\Gba=\SG_\Wba$.}\leqno{(\spadesuit)}$$
Indeed, this follows from Lemma~\ref{lem:permutation}.\finl
\end{rema}

\bigskip

\bigskip

\section{Fixed points}\label{sec:fixed}

\medskip

The $\ZM$-grading on the $\CM$-algebra $\Hb$ (defined in Remark~\ref{rem:grading-h})  induces an 
action of the group $\CM^\times$ on $\Hb$ as follows~\cite[\S{3.5.A}]{calogero}. 
If $\xi \in \CM^\times$ then:
\begin{itemize}
\item[$\bullet$] If $y \in V$, then $\lexp{\xi}{y}=\xi^{-1}y$.

\item[$\bullet$] If $x \in V^*$, then $\lexp{\xi}{x}=\xi x$.

\item[$\bullet$] If $w \in W$, then $\lexp{\xi}{w}=w$.

\item[$\bullet$] If $s \in \Ref(W)$, then $\lexp{\xi}{C_s}=C_s$.
\end{itemize}
So the center $Z$ inherits an action of $\CM^\times$, which may be viewed as a $\CM^\times$-action 
on the Calogero-Moser space $\ZCB$, which stabilizes all the fibers $\ZCB_c$ (for $c \in \CCB$). 

Now, if $m \in \NM^*$, we denote by $\mub_m$ the group of $m$-th root of unity 
in $\CM^\times$. In~\cite[Conjecture~FIX]{calogero}, R. Rouquier and the author conjecture that 
all the irreducible components of the fixed point variety $\ZCB^{\mub_m}$ are 
isomorphic to the Calogero-Moser space of some other complex reflection groups (here, 
$\ZCB^{\mub_m}$ is endowed with its reduced structure). 
This conjecture will be checked for $d \in \{3,4,6\}$ and any $m$ in Section~\ref{sec:exemples}. 


\bigskip

\begin{theo}\label{theo:fixed-d}
Assume that $d$ is odd. Then 
$$\ZCB^{\mub_d} \simeq \{(a,u,v,e) \in \CM^4~|~(e-da)(e+da)e^{d-2}=uv\}.$$
\end{theo}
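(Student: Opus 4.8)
The plan is to compute the coordinate ring of $\ZCB^{\mub_d}$ from the presentation of $Z$ given in Theorem~\ref{theo:centre-diedral}. First I would record the $\ZM$-degrees attached to the generators by the $\CM^\times$-action of Remarks~\ref{rem:grading-h} and~\ref{rem:grading-z}: $\deg(q)=-2$, $\deg(r)=-d$, $\deg(Q)=2$, $\deg(R)=d$, $\deg(\euler)=0$, $\deg(\ab_i)=2i-d$ for $1\le i\le d-1$, and $\deg(A)=0$. The scheme-theoretic fixed locus $\ZCB^{\mub_d}$ has coordinate ring $Z/J$, where $J$ is the ideal of $Z$ generated by the $\ZM$-homogeneous elements of degree not divisible by $d$; we shall see that $Z/J$ is reduced, so $Z/J$ is also the coordinate ring of the reduced variety $\ZCB^{\mub_d}$. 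Since $d$ is odd, $d\nmid 2$ and $d\nmid 2i$ for $1\le i\le d-1$, so $q,Q,\ab_1,\dots,\ab_{d-1}\in J$; conversely, using the $P$-basis $(1,\euler,\dots,\euler^d,\ab_1,\dots,\ab_{d-1})$ of $Z$ from Theorem~\ref{theo:centre-diedral} one checks that every $\ZM$-homogeneous element of $Z$ of degree $\not\equiv 0\pmod d$ lies in $(q,Q,\ab_1,\dots,\ab_{d-1})$. Hence $J=(q,Q,\ab_1,\dots,\ab_{d-1})$ and there is a surjection of $\CM$-algebras
$$\pi\,:\,\CM[a,u,v,e]\longsurto Z/J,\qquad a\mapsto A,\quad u\mapsto r,\quad v\mapsto R,\quad e\mapsto\euler.$$

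Next I would reduce the relations of Theorem~\ref{theo:centre-diedral} modulo $J$. As $\CM[a,u,v,e]$ has no nonzero monomial of $\ZM$-degree $\not\equiv 0\pmod d$, any relation $(\Zrm_i)$ or $(\Zrm_{i,j})$ of degree $\not\equiv 0$ becomes trivial in $Z/J$; now $(\Zrm_i)$ has degree $2i-d\not\equiv 0$, while $(\Zrm_{i,j})$ has degree $2(i+j-d)$, which is $\equiv 0\pmod d$ precisely when $i+j=d$. So only the reductions of $(\Zrm_{i,d-i})$, for $1\le i\le(d-1)/2$, survive. Since $(\Zrm_{i,d-i})$ is bi-homogeneous of bi-degree $(d,d)$ (Remark~\ref{rem:grading-z}) and deforms $(\Zrm_{i,d-i}^0)$, the only monomials surviving modulo $J$ are $rR$ (with coefficient $1$) and the $A^g\euler^{d-g}$ for $0\le g\le d$, the coefficient of $\euler^d$ being $-1$ as the deformation cannot affect it; hence the reduction of $(\Zrm_{i,d-i})$ reads $uv=f_i(a,e)$ in $Z/J$ with $f_i\in\CM[a,e]$ homogeneous of degree $d$ and $f_i\equiv e^d\pmod a$.

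The heart of the matter is to identify $f_i$, and this is where I expect the main difficulty, since the deformed relations $(\Zrm_{i,d-i})$ are not available explicitly for general $d$. I would use three ingredients. First, each simple $\Hb_c$-module $\LC_c(\chi)$ is $\ZM$-graded (it is a graded quotient of a standard module), so any central element of nonzero $\ZM$-degree acts on it by $0$; thus $\O_\chi^c(r)=\O_\chi^c(R)=0$, the point $\O_\chi^c\in\ZCB_c$ lies in $\ZCB_c^{\mub_d}$, and $\O_\chi^c(\euler)\in\{da,-da,0\}$ by~\cite[Proposition~7.3.2]{calogero} (as recalled in the proof of Theorem~\ref{theo:familles}); substituting $(u,v,e)=(0,0,da),(0,0,-da),(0,0,0)$ into $uv=f_i$ gives $f_i(a,da)=f_i(a,-da)=f_i(a,0)=0$, so $e(e-da)(e+da)$ divides $f_i$ in $\CM[a,e]$. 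Second, the $\e$-twist automorphism of $\Hb$ (\cite[\S{3.5.B}]{calogero}) acts on $Z$ by $\euler\mapsto-\euler$, $r\mapsto-r$, $R\mapsto R$, $A\mapsto A$ (check on $\troncation$-images, using that $d$ is odd), preserves $J$, and so acts on $Z/J$; applying it to $uv=f_i$ forces $f_i(a,-e)=-f_i(a,e)$. Together these two facts give $f_i=(e-da)(e+da)\,e\,g_i(a,e)$ with $g_i$ homogeneous of degree $d-3$, even in $e$, and $\equiv e^{d-3}\pmod a$ — already forcing $g_i=e^{d-3}$ when $d\in\{3,5\}$. For the remaining cases I would use the Poisson structure: from $\{q,Q\}=\euler$, the identity $\{\euler,z\}=\deg(z)\,z$, and the bracket computations $\{r,q\}=0$, $\{r,Q\}=d\,\ab_1$, $\{r,\ab_j\}\in J$ (all read off, up to the constants, from the case $c=0$ and from bi-degree reasons, together with Jacobi to reach $\{r,R\}$), reducing $r\{r,R\}=\{r,rR\}=\{r,f_i(A,\euler)\}$ modulo $J$ yields $\overline{\{r,R\}}=d\,\partial_e f_i$ in $Z/J$; comparing this with the leading term $\overline{\{r,R\}}\equiv d^2 e^{d-1}\pmod J$ and with the $\e$-twist parity of $\{r,R\}$ pins down $g_i=e^{d-3}$. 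Thus $f_i=(e-da)(e+da)e^{d-2}$, independently of $i$. (For $d\in\{3,4,6\}$ this also follows directly from the explicit computations of \S\ref{sec:exemples}.)

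Finally, $\pi$ factors through $\CM[a,u,v,e]/\bigl(uv-(e-da)(e+da)e^{d-2}\bigr)$, which is an integral domain of Krull dimension $3$ (the polynomial $uv-h(a,e)$ being irreducible for $h\ne 0$). A dimension count — using that $Z$ is free of rank $2d=|W|$ over $P$ by~(\ref{eq:z-p-libre}) and Theorem~\ref{theo:centre-diedral}, and that $J=(q,Q,\ab_1,\dots,\ab_{d-1})$ — shows $\dim Z/J=3$; a surjection from an integral domain onto a finitely generated $\CM$-algebra of the same dimension is an isomorphism, so $Z/J\simeq\CM[a,u,v,e]/\bigl(uv-(e-da)(e+da)e^{d-2}\bigr)$. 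In particular $Z/J$ is reduced, and
$$\ZCB^{\mub_d}\ \simeq\ \{(a,u,v,e)\in\CM^4\ \mid\ (e-da)(e+da)e^{d-2}=uv\},$$
which is the claimed statement.
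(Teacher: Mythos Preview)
Your setup matches the paper's: the identification $J=(q,Q,\ab_1,\dots,\ab_{d-1})$, the reduction of the relations modulo $J$ to a single shape $uv=f_i(a,e)$ with $f_i$ homogeneous of degree $d$ and $f_i\equiv e^d\pmod a$, and the final isomorphism argument (surjection from a $3$-dimensional domain onto $Z/J$, which is finite over $\CM[A,r,R]$) are all correct.

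The gap is in the identification of $f_i$. The constraints you extract --- roots at $e\in\{0,da,-da\}$, oddness in $e$, and $f_i\equiv e^d\pmod a$ --- do \emph{not} determine $f_i$ once $d\ge 5$. Already for $d=5$ every polynomial of the form $(e^2-25a^2)(e^3+\gamma a^2 e)$, $\gamma\in\CM$, satisfies all three conditions; so your claim that the case $d=5$ is forced is incorrect. (A side remark: the automorphism you call the ``$\e$-twist'' sends $A\mapsto -A$ and fixes $\euler,r,R$; to obtain $\euler\mapsto-\euler$, $r\mapsto -r$, $R\mapsto R$, $A\mapsto A$ you need to compose it with the bi-grading action of $(-1,1)$. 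The conclusion that $f_i$ is odd in $e$ is nevertheless correct.) Your Poisson argument does not close the gap either: to cancel $r$ in $r(\{r,R\}-d\,\partial_e f_i)\in J$ you need $Z/J$ to be a domain, which is precisely what you are trying to establish; and the only piece of $\overline{\{r,R\}}$ you can read off independently of the deformation (the $A=0$ specialisation) merely reproduces the coefficient of $e^{d-1}$, which you already know.

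The paper closes this step with a different, stronger input. It invokes \cite[Corollary~9.4.4]{calogero}, which gives
\[
(\euler-dA)(\euler+dA)\,\euler^{2d-2}=\prod_{\chi\in\Irr(W)}\bigl(\euler-\O_\chi(\euler)\bigr)^{\chi(1)^2}\ \in\ \langle q,Q,r,R\rangle_Z.
\]
Combined with $f_i(A,\euler)\equiv rR\pmod{\IG}$, this forces $f_i(A,\tb)$ to \emph{divide} $(\tb-dA)(\tb+dA)\tb^{2d-2}$ in $\CM[A][\tb]$ --- a far stronger constraint than vanishing at three points. Since the $\CM^\times$-fixed points of $\ZCB$ all lie in $\ZCB^{\mub_d}$, both linear factors $\tb\pm dA$ must occur in $f_i$, and hence $f_i=(\tb-dA)(\tb+dA)\tb^{d-2}$.
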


\bigskip

\begin{rema}\label{rem:fixed}
By~\cite[Theorem~18.2.4]{calogero}, the above Theorem shows that $\ZCB_{\max,c}^{\mub_d}$ 
is isomorphic to the Calogero-Moser space associated with the cyclic group of order $d$ 
and some parameters, so it proves~\cite[Conjecture~FIX]{calogero} in this case.\finl
\end{rema}

\bigskip

\begin{proof}
The case where $d =1$ is not interesting, so we assume that $d \ge 3$. 
Let $\IG$ denote the ideal of $Z$ generated by $\{\lexp{\z}{z}-z~|~z \in Z\}$. 
Then the algebra of regular functions on $\ZCB^{\mub_d}$ is $Z/\sqrt{\IG}$. 
We will describe $Z/\IG$, and this will prove that $\IG=\sqrt{\IG}$ in this case. 
Therefore, 
$$\IG = \langle q,Q,\ab_1,\ab_2,\dots,\ab_{d-1}\rangle,$$
and so $Z/\IG$ is generated by the images of $A$, $r$, $R$ and $\euler$. 
In the quotient $Z/\IG$, all the equations of $\ZM$-degree which is not divisible by $d$ 
are automatically fulfiled, so it only remains the equations $(\Zrm_{i,d-i})$ 
(which is bi-homogeneous of bi-degree $(d,d)$). 
Also, note that $(\Zrm_{i,d-i}^0)$ implies that 
$$\euler^d = rR \mod \langle \IG, A \rangle.$$
The only bi-homogeneous monomials in $A$, $r$, $R$ and $\euler$ of bi-degree $(d,d)$ 
are $rR$ and the $\euler^k A^{d-k}$ (for $0 \le k \le d$). Therefore, the above equation 
implies that there exist complex numbers $\l_0$,\dots, $\l_{d-1}$ such that 
$$\euler^d + \l_{d-1} A\euler^{d-1} + \cdots + \l_1 A^{d-1}\euler + 
\l_0 A^d \equiv rR \mod \IG.\leqno{(*)}$$
On the other hand, it follows from~\cite[Corollary~9.4.4]{calogero} that
$$(\euler -dA)(\euler+dA)\euler^{2d-2}=\prod_{\chi \in \Irr(W)} (\euler - \O_\chi(\euler))^{\chi(1)^2} 
\equiv 0 \mod \langle q,Q,r,R \rangle.$$
So $(*)$ implies that the polynomial 
$\tb^d + \l_{d-1} A\tb^{d-1} + \cdots + \l_1 A^{d-1}\tb + \l_0 A^d$ divides 
$(\tb-dA)(\tb+dA)\tb^{2d-2}$ in $\CM[A][\tb]$. 

Since all the $\CM^\times$-fixed points belong to $\ZCB^{\mub_d}$, this implies that 
$\tb-dA$ and $\tb+dA$ both divide 
$\tb^d + \l_{d-1} A\tb^{d-1} + \cdots + \l_1 A^{d-1}\tb + \l_0 A^d$. 
Therefore, 
$$\tb^d + \l_{d-1} A\tb^{d-1} + \cdots + \l_1 A^{d-1}\tb + \l_0 A^d=
(\tb-dA)(\tb+dA)\tb^{d-2},$$
and so there remains only one relations in the quotient $Z/\IG$, namely
$$(\euler-dA)(\euler+dA)\euler^{d-2} \equiv rR \mod \IG,$$
as desired.
\end{proof}

\bigskip

\section{Examples}\label{sec:exemples}

\medskip

We are interested here in the cases where $d \in \{3,4,6\}$. This are the 
Weyl groups of rank $2$ (of type $A_2$, $B_2$ or $G_2$). For each of these cases, 
we give a complete presentation of the centre $Z$ of $\Hb$ (using the algorithms 
developed in~\cite{bonnafe thiel}). We use these explicit computations 
to check some of the facts that have been stated earlier in this paper. 
Most of the computations are done using {\tt MAGMA}~\cite{magma}.

\bigskip

\subsection{The type $A_2$}
We work here under the following hypothesis:

\medskip

\boitegrise{\it We assume in this subsection, and only in this subsection, that $d=3$. 
In other words, $W$ is a Weyl group of type $A_2$.}{0.75\textwidth}

\bigskip

Using {\tt MAGMA} and~\cite{bonnafe thiel}, we can compute 
effectively the generators of the $\CM[\CCB]$-algebra $Z$ and 
we obtain the following presentation for $Z$ (note that $\CM[\CCB]=\CM[A]$ 
because $A=B$):

\bigskip

\begin{prop}\label{prop:centre-a2}
The $\CM[A]$-algebra $Z$ admits the following presentation:
$$\begin{cases}
\text{Generators:} & q,r,Q,R,\euler, \ab_1,\ab_2 \\
\text{Relations:} &
\begin{cases} 
\euler~\ab_1 = q ~\ab_2 + rQ\\
\euler~\ab_2 = Q~\ab_1 + qR\\
\ab_1^2 = 4q^2Q+r ~\ab_2 - q~\euler^2 + 9A^2~q\\
\ab_1~\ab_2 = 4qQ~\euler + rR - \euler^3 + 9A^2~\euler\\
\ab_2^2 = 4qQ^2  + R~\ab_1 - Q~\euler^2+ 9A^2~Q\\
\end{cases}
\end{cases}$$
\end{prop}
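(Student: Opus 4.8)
The plan is to start from Theorem~\ref{theo:centre-diedral}, which for $d=3$ already delivers the shape of the answer: $Z$ is generated as a $\CM[A]$-algebra by $q,r,Q,R,\euler,\ab_1,\ab_2$, subject to five bi-homogeneous relations $(\Zrm_1),(\Zrm_2),(\Zrm_{1,1}),(\Zrm_{1,2}),(\Zrm_{2,2})$ which deform the relations $(\Zrm_i^0)$ and $(\Zrm_{i,j}^0)$ displayed just before Theorem~\ref{theo:diedral-invariants}. Writing these latter relations out for $d=3$, using $\ab_{0,0}=r$, $\ab_{3,0}=R$, $\euler_0^{(2)}=\euler_0^2-2qQ$ and $\euler_0^{(3)}=\euler_0^3-3qQ\euler_0$ (special cases of~(\ref{eq:changement-base-euler-diedral})), one recovers \emph{exactly} the five relations in the statement with the three terms $9A^2q$, $9A^2\euler$, $9A^2Q$ erased. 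So all that remains is to identify the correction term picked up by each of the five relations under the deformation procedure of~\cite{bonnafe thiel}.

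Next I would constrain those corrections. The deformation of~\cite{bonnafe thiel} preserves the bi-grading (Remark~\ref{rem:grading-z}), so each correction is bi-homogeneous of the same bi-degree as the relation it corrects; it also vanishes at $c=0$, since $Z/\CG_0Z\simeq\CM[V\times V^*]^W$ carries the undeformed relations (Theorem~\ref{theo:diedral-invariants}), so each correction is divisible by $A$. Moreover the algebra automorphism of $\Hb$ attached to the linear character $\e$ (see \cite[\S{3.5.B}]{calogero}) fixes $\CM[V]$ and $\CM[V^*]$, hence fixes $\euler$ and all the $\ab_i$ (which are $\troncation^{-1}$ of $\e$-fixed elements of $\CM[V\times V^*]^W$), while sending $A=C_0$ to $-A$; since $\troncation$ intertwines this automorphism with the substitution $A\mapsto-A$, the whole presentation is stable under $A\mapsto -A$, so the corrections involve only even powers of $A$. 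A bi-degree count of monomials in $q,r,Q,R,\euler,\ab_1,\ab_2$ against a power $A^{2k}$ (noting that no such monomial has bi-degree $(1,0)$ or $(0,1)$, and that $A^2$ has bi-degree $(2,2)$) then forces: $(\Zrm_1)$ and $(\Zrm_2)$ are \emph{undeformed}, while the corrections of $(\Zrm_{1,1}),(\Zrm_{1,2}),(\Zrm_{2,2})$ must be of the form $\l A^2 q$, $\mu A^2\euler$, $\nu A^2 Q$ respectively, for scalars $\l,\mu,\nu\in\CM$.

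It then remains to compute $\l,\mu,\nu$. The most direct route, and the one matching the rest of this section, is to run the algorithm of~\cite{bonnafe thiel} in {\tt MAGMA}: it produces $\euler=\troncation^{-1}(\euler_0)$ and $\ab_i=\troncation^{-1}(\ab_{i,0})$ explicitly in $\Hb$ and evaluates the five products, returning $\l=\mu=\nu=9$. One can also fix these scalars by hand (a useful cross-check): reducing the five shaped relations modulo $\langle q,Q,r,R\rangle$ and using $(\Zrm_2)$ and $(\Zrm_{1,2})$ yields $\euler^4\equiv\mu A^2\euler^2$ there, which compared with $\euler^4(\euler^2-9A^2)\equiv 0\pmod{\langle q,Q,r,R\rangle}$ from \cite[Corollary~9.4.4]{calogero} — together with the fact that $\{1,\euler,\euler^2,\euler^3,\ab_1,\ab_2\}$ is a $\CM[A]$-basis of $Z/\langle q,Q,r,R\rangle$ (Theorem~\ref{theo:centre-diedral}) and that $\O_{\unb_W}^c(\euler)=3a\neq 0$ for $a\neq 0$ shows $\euler$ is not nilpotent in that quotient (\cite[Proposition~7.3.2]{calogero}) — forces $\mu=9$; reducing instead modulo $\langle Q,r,R\rangle$ (resp. $\langle q,r,R\rangle$) and arguing the same way gives $\l=\mu$ (resp. $\nu=\mu$). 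I expect the only genuinely delicate point to be the ``even powers of $A$'' reduction in the middle paragraph — i.e. verifying carefully that $\troncation$ is equivariant for the $\e$-automorphism — after which the rest is a bounded bookkeeping calculation plus the single external input \cite[Corollary~9.4.4]{calogero}; the residual by-hand determination of $\l$ and $\nu$ through the monomial-ideal reductions is the most laborious part if one wishes to avoid {\tt MAGMA} altogether.
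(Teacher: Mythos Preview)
Your proposal is correct and in fact more informative than the paper's own argument. In the paper, Proposition~\ref{prop:centre-a2} is established purely by computer: the generators and relations are produced by running the algorithm of~\cite{bonnafe thiel} in {\tt MAGMA}, and the output is simply recorded. You reproduce this route (explicitly flagging it as the one ``matching the rest of this section''), but in addition you give a structural by-hand argument: Theorem~\ref{theo:centre-diedral} fixes the number and bi-degrees of the relations, the undeformed relations $(\Zrm_i^0)$, $(\Zrm_{i,j}^0)$ at $d=3$ give the displayed identities with the $9A^2$-terms removed, and then the $\e$-automorphism together with a bi-degree count reduces the problem to three scalars $\l,\mu,\nu$, which you determine via~\cite[Corollary~9.4.4]{calogero} and the $P$-basis of $Z$ from Theorem~\ref{theo:centre-diedral}.

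All of the steps check out. The $\e$-equivariance of $\troncation$ that you flag as the ``genuinely delicate point'' is indeed straightforward once one notes that the automorphism attached to $\e$ acts on a PBW term $h_w\,w$ as $\gamma^*(h_w)\,\e(w)\,w$ with $\gamma^*$ the substitution $A\mapsto -A$ on $\CM[\CCB]$ and the identity on $\CM[V]\otimes\CM[V^*]$; since $\e(1)=1$ this gives $\troncation\circ\e=\gamma^*\circ\troncation$, whence $\euler,\ab_1,\ab_2$ are fixed. Your extraction of $\mu$ from $\euler^4\equiv\mu A^2\euler^2$ modulo $\langle q,Q,r,R\rangle$ requires ruling out $\mu=0$ as well as matching $9$, and your non-nilpotency argument via $\O_{\unb_W}^c(\euler)=3a$ does exactly that; the subsequent identifications $\l=\mu$ and $\nu=\mu$ go through because $Z/\langle Q,r,R\rangle$ (resp.\ $Z/\langle q,r,R\rangle$) is free over $\CM[A,q]$ (resp.\ $\CM[A,Q]$) on $\{1,\euler,\euler^2,\euler^3,\ab_1,\ab_2\}$, so $A^2q\,\ab_2$ (resp.\ $A^2Q\,\ab_1$) is nonzero there. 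Compared with the paper, your approach trades a black-box computation for a short conceptual argument that explains \emph{why} the only deformation terms are $9A^2q$, $9A^2\euler$, $9A^2Q$; the paper's approach has the advantage of producing $\euler$ and the $\ab_i$ explicitly in $\Hb$, which is what one needs anyway for the subsequent computations (minimal polynomial, Poisson brackets).
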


%

\bigskip

The minimal polynomial of $\euler$ is given by
\equat\label{eq:minimal-euler-a2}
\begin{split}
\tb^6 - (6qQ+9A^2)~\tb^4 -rR~\tb^3 + 9(q^2Q^2+2A^2qQ)~\tb^2 \hskip1cm\\
~\hskip1cm + 3qrQR~\tb  + q^3R^2 + r^2Q^3 - 4q^3Q^3-9A^2q^2Q^2.
\end{split}
\endequat

\bigskip

We conclude by proving~\cite[Conjecture~FIX]{calogero} in this case (about the variety 
$\ZCB^{\mub_m}$). Note that the only interesting case is where $m$ divides the order of an element 
of $W$. So $m \in \{1,2,3\}$. The case $m=1$ is stupid while the case $m=3$ is treated 
in Theorem~\ref{theo:fixed-d}:

\bigskip

\begin{prop}\label{prop:a2-mu2}
The $\CM[A]$-algebra $\CM[\ZCB^{\mub_2}]$ admits the following presentation:
$$\begin{cases}
\text{Generators:} & q,Q,\eulerq \\
\text{Relations:} &
\begin{cases} 
q(\eulerq^2 - 9A^2 - 4qQ)=0,\\
\eulerq(\eulerq^2 - 9A^2 - 4qQ)=0,\\
Q(\eulerq^2 - 9A^2 - 4qQ)=0,\\
\end{cases}
\end{cases}$$
In particular, if $a \neq 0$, then the variety $\ZCB_c^{\mub_2}$ has two irreducible 
components:
\begin{itemize}
\itemth{1} A component of dimension $2$ defined by the equation $\eulerq^2 - 9a^2 - 4qQ=0$ 
(which contains the points $z_1$ and $z_\e$).

\itemth{2} An isolated point, which is equal to $z_{\chi_1}$.
\end{itemize}
\end{prop}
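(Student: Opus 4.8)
The plan is to deduce the presentation directly from Proposition~\ref{prop:centre-a2} once the $\mub_2$-ideal is identified, and then to read off the geometry when $a\neq 0$. Let $\IG\subseteq Z$ be the ideal generated by $\{\lexp{-1}{z}-z\mid z\in Z\}$, so that $\CM[\ZCB^{\mub_2}]=Z/\sqrt{\IG}$. By Remark~\ref{rem:grading-z} the generators $q,r,Q,R,\euler,\ab_1,\ab_2$ are $\ZM$-homogeneous of degrees $-2,-3,2,3,0,-1,1$, and any $\ZM$-homogeneous monomial in them of odd degree must involve one of $r,R,\ab_1,\ab_2$; hence $\IG=\langle r,R,\ab_1,\ab_2\rangle$. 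First I would reduce modulo $(r,R,\ab_1,\ab_2)$ the five relations of Proposition~\ref{prop:centre-a2}: the two relations $\euler\ab_1=q\ab_2+rQ$ and $\euler\ab_2=Q\ab_1+qR$ become $0=0$, while the other three, writing $\eulerq$ for the image of $\euler$ and $f=\eulerq^2-9A^2-4qQ$, collapse to
$$q\,f=0,\qquad \eulerq\,f=0,\qquad Q\,f=0 .$$
Since quotienting a presented algebra by a set of its generators amounts to deleting those generators and reducing the relations, this yields an isomorphism $Z/\IG\simeq\CM[A][q,Q,\eulerq]/(qf,\eulerq f,Qf)$.

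Next I would show this ring is reduced, which gives $\sqrt{\IG}=\IG$ and hence the presentation claimed for $\CM[\ZCB^{\mub_2}]$. The key observation is that $(qf,\eulerq f,Qf)=f\cdot(q,Q,\eulerq)=(f)\cap(q,Q,\eulerq)$: the polynomial $f$ is irreducible in the UFD $\CM[A,q,Q,\eulerq]$ (it is monic quadratic in $\eulerq$ and $4qQ+9A^2$ is not a square), so $(f)$ is prime, and $f\equiv-9A^2\not\equiv 0$ modulo $(q,Q,\eulerq)$, so $f\notin(q,Q,\eulerq)$; a standard argument then gives $f\cdot(q,Q,\eulerq)=(f)\cap(q,Q,\eulerq)$, and an intersection of prime ideals is radical. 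I expect this reducedness check (equivalently: the three relations generate a radical ideal, with no hidden relations) to be the main technical point of the proof, though here it is short.

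Finally, for $a\neq 0$ I would specialize $A=a$, so that $\CM[\ZCB_c^{\mub_2}]=\CM[q,Q,\eulerq]/(qf_a,\eulerq f_a,Qf_a)$ with $f_a=\eulerq^2-9a^2-4qQ$. Its minimal primes are $(f_a)$, cutting out the affine quadric $\{\eulerq^2-4qQ=9a^2\}$, which is smooth (as $9a^2\neq 0$) hence irreducible of dimension $2$, and $(q,Q,\eulerq)$, the reduced origin, which is not contained in $V(f_a)$ since $f_a(0,0,0)=-9a^2\neq 0$; these are the two irreducible components. To locate the points $z_\chi$ for $\chi\in\Irr(W)=\{\unb_W,\e,\chi_1\}$, recall that each $z_\chi$ is a $\CM^\times$-fixed point of $\ZCB_c$ (central $\ZM$-homogeneous elements of nonzero degree act nilpotently on the finite-dimensional graded module $\LC_c(\chi)$, so in particular $\O_\chi^c$ kills $q,Q,r,R,\ab_1,\ab_2$), so that $q$ and $Q$ vanish at $z_\chi$ while $\eulerq$ takes there the value $\O_\chi^c(\euler)$, which by~\cite[Proposition~7.3.2]{calogero} (compare the proof of Theorem~\ref{theo:familles}) equals $3a$, $-3a$, $0$ for $\unb_W$, $\e$, $\chi_1$ respectively. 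Since $f_a(0,0,\pm 3a)=9a^2-9a^2=0$ whereas $(0,0,0)\notin V(f_a)$, the points $z_1=z_{\unb_W}$ and $z_\e$ lie on the $2$-dimensional quadric component and $z_{\chi_1}$ is the isolated point, as claimed. The only remaining subtlety beyond the reducedness step is this last identification of the $z_\chi$, which rests entirely on the nilpotency of the positive- and negative-degree central invariants.
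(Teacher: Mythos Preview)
Your proof is correct. The paper does not provide an explicit proof of this proposition: it is stated immediately after Proposition~\ref{prop:centre-a2} and left to the reader, the tacit expectation being that one reduces the presentation of $Z$ modulo the odd $\ZM$-degree generators exactly as you do (and as the paper does in the proof of Theorem~\ref{theo:fixed-d} and in Proposition~\ref{prop:fixed-b2}). Your argument fills in precisely this gap, including the reducedness check $(qf,\eulerq f,Qf)=(f)\cap(q,Q,\eulerq)$, which the paper does not spell out anywhere; the identification of the points $z_\chi$ via the vanishing of nonzero-degree central elements on $\LC_c(\chi)$ is likewise the standard argument implicit in the paper's use of ``$\CM^\times$-fixed points''.
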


\bigskip

%
%
%

\subsection{The type $B_2$}
We work here under the following hypothesis:

\medskip

\boitegrise{\it We assume in this subsection, and only in this subsection, that $d=4$. 
In other words, $W$ is a Weyl group of type $B_2$.}{0.75\textwidth}

\bigskip

Using {\tt MAGMA} and~\cite{bonnafe thiel}, we can compute 
effectively the generators of the $\CM[\CCB]$-algebra $Z$ and 
we obtain the following presentation for $Z$ (note that $A \neq B$):

\bigskip

\begin{prop}\label{prop:centre-b2}
The $\CM[A,B]$-algebra $Z$ admits the following presentation:
$$\begin{cases}
\text{Generators:} & q,r,Q,R,\euler, \ab_1,\ab_2,\ab_3 \\
\text{Relations:} &
\begin{cases} 
\euler~\ab_1 &\hskip-0.4cm= q ~\ab_2 + rQ-2(A^2-B^2)~q\\
\euler~\ab_2 &\hskip-0.4cm= q~\ab_3 + Q~\ab_1-2(A^2-B^2)~\euler\\
\euler~\ab_3 &\hskip-0.4cm= qR + Q~\ab_2-2(A^2-B^2)~Q\\
\ab_1^2 &\hskip-0.4cm= r~\ab_2-q^2~\euler^2+4q^3Q
+2(A^2-B^2)~r+8(A^2+B^2)~q^2\\
\ab_1~\ab_2 &\hskip-0.4cm= r~\ab_3 - q~\euler^3 + 4q^2Q~\euler
+2(A^2-B^2)~\ab_1+8(A^2+B^2)~q~\euler\\
\ab_1~\ab_3 &\hskip-0.4cm= rR -\euler^4 + 5qQ \euler^2 - 4q^2Q^2+4(A^2-B^2)~\ab_2\\
&
+8(A^2+B^2)~\euler^2 - 8(A^2+B^2)~qQ
-8(A^2-B^2)^2\\
\ab_2^2 &\hskip-0.4cm= rR - \euler^4 + 4qQ~\euler^2+4(A^2-B^2)~\ab_2\\
&
+8(A^2+B^2)~\euler^2 
-4(A^2-B^2)^2\\
\ab_2~\ab_3 &\hskip-0.4cm= R~\ab_1 - Q~\euler^3 + 4qQ^2~\euler
+2(A^2-B^2)~\ab_3+8(A^2+B^2)~Q~\euler\\
\ab_3^2 &\hskip-0.4cm= R~\ab_2 - Q^2~\euler^2 + 4qQ^3+2(A^2-B^2)~R+8(A^2+B^2)~Q^2\\
\end{cases}
\end{cases}$$
\end{prop}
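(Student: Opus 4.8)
My plan is to lean on Theorem~\ref{theo:centre-diedral}, specialized to $d=4$, which already supplies the skeleton: $Z$ is generated over $\CM[\CCB]=\CM[A,B]$ by $q,r,Q,R,\euler,\ab_1,\ab_2,\ab_3$ and is defined by nine bi-homogeneous relations $(\Zrm_i)_{1\le i\le 3}$, $(\Zrm_{i,j})_{1\le i\le j\le 3}$, each a deformation of the classical relation $(\Zrm_i^0)$, $(\Zrm_{i,j}^0)$ written down just before Theorem~\ref{theo:diedral-invariants}. So the only real task is to make the deformation (correction) terms explicit for $d=4$. First I would carry out the purely bookkeeping step of specializing those classical relations to $d=4$, using~(\ref{eq:changement-base-euler-diedral}) to rewrite the $\euler_0^{(k)}$ in terms of $\euler_0$; this produces the part free of $A$ and $B$ of each of the nine right-hand sides in the statement.

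Next I would constrain the shape of the corrections. By Theorem~\ref{theo:centre-diedral} the correction attached to a relation is an element of $Z$ of the same bi-degree as that relation which vanishes at $c=0$, hence lies in $(A,B)\cdot Z$; writing it in the decomposition $Z=P\oplus P\euler\oplus\cdots\oplus P\euler^4\oplus P\ab_1\oplus P\ab_2\oplus P\ab_3$ of Theorem~\ref{theo:centre-diedral}, with $P=\CM[\CCB][q,r,Q,R]$, and imposing bi-homogeneity leaves, for each relation, only a combination of finitely many monomials in $A,B,q,r,Q,R$ against the basis vectors $1,\euler,\dots,\euler^4,\ab_1,\ab_2,\ab_3$, with undetermined scalar coefficients. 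I would then prune this list using the element $\t\in\Nrm_{\Gb\Lb_\CM(V)}(W)$ of~\S\ref{sub:diedral}: since $\troncation$ is $\Nrm_{\Gb\Lb_\CM(V)}(W)$-equivariant, Remarks~\ref{rem:tau-parametres} and~\ref{rem:tau-invariants} show that $\t$ acts on $Z$ by fixing $q,Q,\euler$, negating $r,R,\ab_1,\ab_2,\ab_3$, and swapping $A\leftrightarrow B$; as $\t$ must carry defining relations to defining relations, the corrections are forced to be built from $A^2-B^2$ and $A^2+B^2$ with exactly the parities visible in the statement. This already accounts for the overall shape of the answer.

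The remaining step — pinning down the surviving scalar coefficients — is the genuine obstacle, and I do not see how to avoid an explicit computation inside $\Hb$. Following the algorithm of~\cite{bonnafe thiel}, one writes $\euler=\troncation^{-1}(\euler_0)$ and $\ab_i=\troncation^{-1}(\ab_{i,0})$ as explicit elements of $\Hb$ in the PBW basis of Theorem~\ref{PBW-1}, multiplies them out using the commutation relations~(\ref{relations-1}), applies $\troncation$ again, and reads off the coordinates in the basis of Lemma~\ref{lem:0}; this is precisely the computation carried out with {\tt MAGMA} and {\tt CHAMP}. Several cheap consistency checks are available along the way: specializing $c=0$ must return $(\Zrm_i^0)$ and $(\Zrm_{i,j}^0)$; applying the central characters $\O^c_\chi\colon Z\to\CM$ to the relations must be compatible with the values of $\O^c_\chi(\euler)$ recorded in the proof of Theorem~\ref{theo:familles}; and the induced Poisson bracket must satisfy $\{q,Q\}=\euler$ as in~(\ref{eq:poisson-euler}).

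Finally, I emphasize that no further argument is needed to see that these nine relations \emph{constitute} a presentation — that is part of Theorem~\ref{theo:centre-diedral}. Should one prefer an argument independent of~\cite{bonnafe thiel}, one could instead proceed as in the proof of Theorem~\ref{theo:diedral-invariants}: the relations provide a surjection from the algebra they present onto $Z$ (the listed generators generate $Z$), and they allow one to reduce every monomial to a $\CM[\CCB][q,r,Q,R]$-linear combination of $1,\euler,\dots,\euler^4,\ab_1,\ab_2,\ab_3$, so the bi-graded Hilbert series of the presented algebra is bounded above by that of $Z$ (which is free of rank $|W|=8$ over $P$ by~(\ref{eq:z-p-libre})); together with surjectivity this forces the map to be an isomorphism.
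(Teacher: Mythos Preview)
Your proposal is correct and aligns with the paper's approach: the paper simply records that this presentation was obtained by running the algorithm of~\cite{bonnafe thiel} in {\tt MAGMA} on the general framework of Theorem~\ref{theo:centre-diedral} specialized to $d=4$, which is exactly the computation you describe. Your use of bi-homogeneity and the $\t$-symmetry to predict the shape of the deformation terms is a pleasant conceptual supplement that the paper does not make explicit, but the core argument is the same.
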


\bigskip

Now, let
\eqna 
f_4(\tb)&=&\tb^4-8(qQ+A^2+B^2)~\tb^3 + 
(20q^2Q^2-rR +32(A^2+B^2)qQ+16(A^2-B^2)^2)~\tb^2 \\
&& -4(4q^3Q^3-qrQR+2(A^2-B^2)(q^2R+rQ^2)+8(A^2+B^2)q^2Q^2)~\tb
+ (q^2R-rQ^2)^2.
\endeqna
Then
\equat
\text{\it The minimal polynomial of $\euler$ is $f_4(\tb^2)$.}
\endequat

%

\bigskip
\def\rim{\rG}
\def\qim{\qG}
\def\Rim{\RG}
\def\Qim{\QG}
\def\eulerim{\eG\uG}
\def\abim{\aG}

\subsubsection{Cuspidal point} 
We aim to prove here Theorem~\ref{theo:lie}(a). 
By Table~\ref{table:cuspidal}, there is a cuspidal point in 
$\ZCB_{\! c}$ if and only if $a^2=b^2$. Using the automorphism 
of $\Hb$ (and so, of $Z$) induced by the linear character $\e_s$ (see~\cite[\S{3.5.B}]{calogero}), 
we may reduce to the case where 
$$a=b.$$
Then there is only one cuspidal family $\FC$ in $\ZCB_{\! c}$ (the one containing $\chi_1$). 
It is easily checked that $\mG_\FC^c=\langle q,r,Q,R,\euler,\ab_1,\ab_2,\ab_3 \rangle_{Z_c}$ 
and it is readily checked 
from the presentation given in Proposition~\ref{prop:centre-b2} 
that the cotangent space $\Lie_c(\FC)=\mG_\FC^c/(\mG_\FC^c)^2$ has dimension $8$: so a basis 
is given by the images $\qim$, $\rim$, $\Qim$, $\Rim$, $\eulerim$, $\abim_1$, $\abim_2$, 
$\abim_3$ of $q$, $r$, $Q$, $R$, $\euler$, $\ab_1$, $\ab_2$, $\ab_3$ respectively. 
 
The computation of the Poisson bracket can be done using the {\tt MAGMA} package {\tt CHAMP}: writing 
the result modulo $(\mG_\FC^c)^2$ gives the Lie bracket on $\Lie_c(\FC)$. 
We can then deduce that:

\bigskip

\begin{prop}\label{prop:b2-lie}
Assume here that $a=b$. 
The linear map $\aleph_c$ defined in Table~\ref{table:sl3} 
is a morphism of Lie algebras. It is an isomorphism if 
$a \neq 0$.
\end{prop}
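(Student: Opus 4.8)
The plan is to obtain the Lie bracket on $\Lie_c(\FC)=\mG_\FC^c/(\mG_\FC^c)^2$ from an explicit computation of the Poisson bracket on $Z_c$ and then to check, by a finite verification, that the linear map $\aleph_c$ displayed in Table~\ref{table:sl3} intertwines it with the bracket of $\sG\lG_3(\CM)$. Recall that, after the $\e_s$--twist, we are in the case $a=b$, that $\FC$ is the unique cuspidal family (the one containing $\chi_1$), that $\mG_\FC^c=\langle q,r,Q,R,\euler,\ab_1,\ab_2,\ab_3\rangle_{Z_c}$, and that $(\qim,\rim,\Qim,\Rim,\eulerim,\abim_1,\abim_2,\abim_3)$ is a basis of $\Lie_c(\FC)$. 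That this is genuinely a basis (i.e.\ $\dim_\CM\Lie_c(\FC)=8$) follows from Proposition~\ref{prop:centre-b2}: reducing each relation $(\Zrm_i)$, $(\Zrm_{i,j})$ modulo $(\mG_\FC^c)^2$ after the specialization $a=b$, every monomial of degree $\ge 2$ is killed and every coefficient carrying a factor $A^2-B^2$ vanishes, so the relations impose no linear relation among the eight generators.

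First I would compute the Poisson brackets of the eight generators of $Z_c$ with the {\tt MAGMA} package {\tt CHAMP}~\cite{thiel} together with the algorithms of~\cite{bonnafe thiel}: the bracket is $\CM[\CCB]$--linear and is pinned down by the seed relation $\{q,Q\}=\euler$ (see~(\ref{eq:poisson-euler})) and the $\Hb$--module structure, and {\tt CHAMP} returns each $\{u,v\}$ as an explicit element of $Z_c$ written in the $P$--basis of Theorem~\ref{theo:centre-diedral}. Then I would reduce each bracket modulo $(\mG_\FC^c)^2$, i.e.\ retain only its linear part. Here the $\ZM$--grading of $Z_c$ does the bookkeeping: the bracket has $\ZM$--degree $0$ and the generators have $\ZM$--degrees $-2,-4,2,4,0,-2,0,2$, so only finitely many generators can occur in the linear part of a given $\{v_i,v_j\}$; since $\FC$ is cuspidal we have $\{\mG_\FC^c,\mG_\FC^c\}\subseteq\mG_\FC^c$, so no constant terms appear, and (as above) the specialization $a=b$ must be performed \emph{before} truncation so that no spurious linear terms $A^2-B^2$ survive. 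This yields the full table of structure constants $[v_i,v_j]$.

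Next I would verify that $\aleph_c$ is a morphism of Lie algebras: for each of the ${8 \choose 2}=28$ pairs $(v_i,v_j)$ of basis vectors one compares $\aleph_c([v_i,v_j])$, read off from the bracket table of the previous step, with the matrix commutator $[\aleph_c(v_i),\aleph_c(v_j)]$ in $\sG\lG_3(\CM)$, read off from Table~\ref{table:sl3}. This is a routine check, again best done with {\tt MAGMA}~\cite{magma}. Finally, for $a\neq 0$ one observes that $\dim_\CM\Lie_c(\FC)=8=\dim_\CM\sG\lG_3(\CM)$ and that the matrix of $\aleph_c$ in the two chosen bases (as given in Table~\ref{table:sl3}) has determinant a nonzero scalar times a power of $a$; hence $\aleph_c$ is bijective, so $\Lie_c(\FC)\simeq\sG\lG_3(\CM)$, a simple Lie algebra of type $A_2$. (As an independent sanity check one could recompute the Killing form, or the root system associated with $\eulerim$ and $\eulerim$--eigenvectors, to confirm the type.)

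The main obstacle is not conceptual but organisational: one must be completely confident that the truncation of the (sizeable) {\tt CHAMP} output modulo $(\mG_\FC^c)^2$ has been carried out correctly — that every higher-order term really lies in $(\mG_\FC^c)^2$, that the specialization $a=b$ was applied first, and that the resulting antisymmetric array actually satisfies the Jacobi identity (a useful self-check before comparing with $\sG\lG_3(\CM)$). Everything else — the dimension count, the $28$ bracket comparisons, and the invertibility of $\aleph_c$ — is mechanical.
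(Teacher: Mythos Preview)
Your proposal is correct and follows essentially the same approach as the paper: the paper's argument is simply the sentence preceding the Proposition, namely that the Poisson bracket is computed with {\tt CHAMP}, reduced modulo $(\mG_\FC^c)^2$, and the result compared with $\sG\lG_3(\CM)$ via the map in Table~\ref{table:sl3}. Your write-up expands on the organisational details (the $\ZM$--grading bookkeeping, the order of specialization versus truncation, the $28$ pairwise checks, and the invertibility of $\aleph_c$ for $a\neq 0$), but the underlying computation is the same.
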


\begin{table}\refstepcounter{theo}
$$\begin{array}{ccc}
\qim & \longmapsto & \begin{pmatrix} 0 & 0 & 0 \\ 1 & 0 & 0  \\ 0 & 1 & 0 \end{pmatrix} \\
\rim  & \longmapsto & 8a \begin{pmatrix} 0 & 0 & 0 \\ 0 & 0 & 0 \\ 1 & 0 & 0 \end{pmatrix} \\
\Qim      & \longmapsto & \begin{pmatrix} 0 & -2 & 0 \\ 0 & 0 & -2 \\ 0 & 0 & 0  \end{pmatrix} \\
\Rim & \longmapsto & 32a \begin{pmatrix} 0 & 0 & 1 \\ 0 & 0 & 0\\ 0 & 0 & 0  \end{pmatrix} \\
\end{array}
\begin{array}{ccc}
\eulerim & \longmapsto & \begin{pmatrix} 2 & 0 & 0 \\ 0 & 0 & 0 \\ 0 & 0 & -2 \end{pmatrix} \\
\abim_2     & \longmapsto & \DS{\frac{8a}{3}} \begin{pmatrix} 1 & 0 & 0 \\ 0 & -2 & 0 \\ 0 & 0 & 1 \end{pmatrix} \\
\abim_1 & \longmapsto & 4a \begin{pmatrix} 0 & 0 & 0 \\ 1 & 0 & 0 \\ 0 & -1 & 0 \end{pmatrix} \\
\abim_3 & \longmapsto & 8a\begin{pmatrix} 0 & -1 & 0 \\ 0 & 0 & 1 \\ 0 & 0 & 0 \end{pmatrix} \\
\end{array}$$

\bigskip

\caption{Definition of $\aleph_c$ for $d=4$}\label{table:sl3}
\end{table}

\bigskip

\subsubsection{Fixed points}
The next result follows immediately from the presentation given in Proposition~\ref{prop:centre-b2}:

\bigskip

\begin{prop}\label{prop:fixed-b2}
The $\CM[A,B]$-algebra $\CM[\ZCB^{\mub_4}]$ admits the following presentation:
$$\begin{cases}
\text{Generators:} & r,R,\euler,\ab_2 \\
\text{Relations:} &
\begin{cases} 
\euler~(\ab_2+2(A^2-B^2) =0 \\
r~(\ab_2+2(A^2-B^2) =0\\
R~(\ab_2+2(A^2-B^2) =0\\
(\ab_2-2(A^2-B^2))(\ab_2+2(A^2-B^2))=0\\
(\euler-2(A+B))(\euler+2(A+B))(\euler-2(A-B))(\euler+2(A-B))\\ \qquad\qquad=rR  
+ 4(A^2-B^2)(\ab_2+2(A^2-B^2))\\
\end{cases}
\end{cases}$$
In particular:
\begin{itemize}
\itemth{a} If $a^2=b^2$, then $\ZCB_c^{\mub_4}$ is irreducible and is equal to
$$\{(e,u,v) \in \CM^3~|~(e-4a)(e+4a)e^2=uv\}.$$

\itemth{b} If $a^2 \neq b^2$, then $\ZCB_c^{\mub_4}$ has two irreducible 
components:
\begin{itemize}
\itemth{b1} The one of maximal dimension which is equal to
$$\{(e,u,v) \in \CM^3~|~(e-2(a+b))(e+2(a+b))(e-2(a-b))(e+2(a-b))=uv\},$$
and which contains $z_\unb$, $z_\e$, $z_{\e_s}$ and $z_{\e_t}$. 

\itemth{b2} An isolated point (corresponding to the maximal ideal 
$\langle r,R,\euler,\ab_2-2(a^2-b^2)\rangle$), which is equal to $z_{\chi_1}$.
\end{itemize}
\end{itemize}

\end{prop}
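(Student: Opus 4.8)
The plan is to identify the ideal of $Z$ that cuts out $\ZCB^{\mub_4}$ directly from the $\ZM$-grading of Remark~\ref{rem:grading-z}, and then to read off a presentation from Proposition~\ref{prop:centre-b2}. A primitive $4$-th root of unity $\z$ acts on a $\ZM$-homogeneous element $z\in Z$ by $\lexp\z z=\z^{\deg z}\,z$, so $\lexp\z z - z$ vanishes exactly when $4\mid\deg z$. Among the eight generators of $Z$ the elements $r$, $R$, $\euler$, $\ab_2$ have $\ZM$-degrees $-4$, $4$, $0$, $0$, whereas $q$, $\ab_1$ have degree $-2$ and $Q$, $\ab_3$ have degree $2$; since any $\ZM$-homogeneous element of degree not divisible by $4$ is a combination of monomials involving at least one of $q,Q,\ab_1,\ab_3$, the ideal $\IG=\langle\lexp\z z-z\mid z\in Z\rangle$ is simply $\langle q,Q,\ab_1,\ab_3\rangle$. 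Hence $Z/\IG$ is the quotient of the polynomial ring $\CM[\CCB][r,R,\euler,\ab_2]$ by the images of the nine relations of Proposition~\ref{prop:centre-b2}.

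Setting $q=Q=\ab_1=\ab_3=0$ in those nine relations, the four relations computing $\euler\ab_1$, $\euler\ab_3$, $\ab_1\ab_2$, $\ab_2\ab_3$ become trivial; the relations computing $\euler\ab_2$, $\ab_1^2$, $\ab_3^2$ collapse to $\euler(\ab_2+2(A^2-B^2))=0$, $r(\ab_2+2(A^2-B^2))=0$, $R(\ab_2+2(A^2-B^2))=0$; the relation computing $\ab_1\ab_3$ becomes, after transferring $16(A^2-B^2)^2$ to the left and factoring the left-hand side, $(\euler-2(A+B))(\euler+2(A+B))(\euler-2(A-B))(\euler+2(A-B))=rR+4(A^2-B^2)(\ab_2+2(A^2-B^2))$; and subtracting this from the relation computing $\ab_2^2$ gives $(\ab_2-2(A^2-B^2))(\ab_2+2(A^2-B^2))=0$. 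These are exactly the five relations in the statement, so $Z/\IG$ has the announced presentation.

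What still needs proof is that $\IG$ is radical, so that $\CM[\ZCB^{\mub_4}]=Z/\sqrt\IG$ really equals $Z/\IG$; I expect this, rather than the substitution above, to be the technical heart of the argument. I would show that in $\CM[\CCB][r,R,\euler,\ab_2]$ the ideal $J$ generated by the five relations is the intersection of the two prime ideals $\qG=\langle\ab_2+2(A^2-B^2),\ (\euler-2(A+B))(\euler+2(A+B))(\euler-2(A-B))(\euler+2(A-B))-rR\rangle$ and $\qG_0=\langle\ab_2-2(A^2-B^2),\ \euler,\ r,\ R\rangle$. Both quotients are domains ($\CM[\CCB][r,R,\euler]$ modulo a polynomial of degree one, hence irreducible, in $r$; respectively $\CM[\CCB]$ itself), so it suffices to prove the two inclusions. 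The inclusion $J\subseteq\qG\cap\qG_0$ is a direct check relation by relation. For $\qG\cap\qG_0\subseteq J$ one writes a general element of $\qG$ as $a(\ab_2+2(A^2-B^2))+b\bigl((\euler-2(A+B))(\euler+2(A+B))(\euler-2(A-B))(\euler+2(A-B))-rR\bigr)$; membership in $\qG_0$ then forces $a\equiv-4(A^2-B^2)b$ modulo $\qG_0$ (using $\CM[\CCB]$ integral and $4(A^2-B^2)\neq0$), and a short rearrangement rewrites the element in terms of the five generators of $J$. Once $J=\qG\cap\qG_0$ is established, $J$ is radical, hence so is $\IG$.

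Finally, (a) and (b) follow by specialising $A\mapsto a$, $B\mapsto b$ and passing to the reduced quotient of $Z_c/\IG Z_c$. If $a^2=b^2$ then $2(a^2-b^2)=0$, so $(\ab_2-2(a^2-b^2))(\ab_2+2(a^2-b^2))=0$ becomes $\ab_2^2=0$, the reduced structure kills $\ab_2$, and since $\{\pm2(a+b),\pm2(a-b)\}=\{\pm4a,0,0\}$ the remaining relation reads $(\euler-4a)(\euler+4a)\euler^2=rR$, which presents the irreducible variety of (a). If $a^2\neq b^2$ the specialisations of $\qG$ and $\qG_0$ remain distinct and incomparable, so $\ZCB_c^{\mub_4}$ has exactly two irreducible components: the hypersurface $V(\qG)$ of dimension $2$, equal to $\{(e,u,v)\in\CM^3\mid(e-2(a+b))(e+2(a+b))(e-2(a-b))(e+2(a-b))=uv\}$, and the single point $V(\qG_0)=\{\euler=r=R=0,\ \ab_2=2(a^2-b^2)\}$. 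On $V(\qG)$ the locus $u=v=0$ consists of the four points with $\euler\in\{2(a+b),-2(a+b),2(a-b),-2(a-b)\}$, which by Theorem~\ref{theo:familles}(b) (with $e=d/2=2$) are $\O_{\unb_W}^c(\euler)$, $\O_\e^c(\euler)$, $\O_{\e_s}^c(\euler)$, $\O_{\e_t}^c(\euler)$; hence $z_\unb$, $z_\e$, $z_{\e_s}$, $z_{\e_t}$ lie on $V(\qG)$. Since $\O_{\chi_1}^c(\euler)=0$ is not among these values when $a^2\neq b^2$, the $\CM^\times$-fixed point $z_{\chi_1}$ cannot lie on $V(\qG)$, and is therefore the isolated point $V(\qG_0)$.
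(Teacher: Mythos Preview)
Your proposal is correct and follows exactly the approach the paper intends: the paper states only that the result ``follows immediately from the presentation given in Proposition~\ref{prop:centre-b2}'' and gives no further argument, so your substitution of $q=Q=\ab_1=\ab_3=0$ into the nine relations is precisely what is meant. Your additional verification that the ideal is radical---by exhibiting $J=\qG\cap\qG_0$ as an intersection of two primes---is a genuine contribution beyond what the paper writes down (the paper implicitly assumes this, as it did in the proof of Theorem~\ref{theo:fixed-d}), and your argument for the inclusion $\qG\cap\qG_0\subseteq J$ is clean and correct. The identification of the components in~(a) and~(b), including the placement of $z_{\chi_1}$ by elimination (it is $\CM^\times$-fixed, hence in $\ZCB_c^{\mub_4}$, but $\euler=0$ is not a root of the quartic on $V(\qG)$ when $a^2\neq b^2$), is also right.
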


\bigskip

\subsection{The type $G_2$}
We work here under the following hypothesis:

\medskip

\boitegrise{\it We assume in this subsection, and only in this subsection, that $d=6$. 
In other words, $W$ is a Weyl group of type $G_2$.}{0.75\textwidth}

\bigskip

Using {\tt MAGMA} and~\cite{bonnafe thiel}, we can compute 
effectively the generators of the $\CM[\CCB]$-algebra $Z$ and 
we obtain the following presentation for $Z$ (note that $A \neq B$):

\bigskip

\begin{prop}\label{prop:centre-g2}
The $\CM[A,B]$-algebra $Z$ admits the following presentation:
$$\begin{cases}
\text{Generators:} &  r,R,\euler,\ab_1,\ab_2,\ab_3 \\
\text{Relations:} & \text{see Table~\ref{table:rel-g2}}\\
\end{cases}$$

\end{prop}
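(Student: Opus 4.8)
The abstract shape of this presentation is already supplied by Theorem~\ref{theo:centre-diedral} applied with $d=6$: it gives the list of generators, the decomposition of $Z$ as a free $P$-module with basis $1,\euler,\dots,\euler^6,\ab_1,\dots,\ab_5$, the minimality of the numbers of generators and of relations, and the fact that the relations are the $\CM[\CCB]$-deformations $(\Zrm_i)$ and $(\Zrm_{i,j})$ of the relations $(\Zrm_i^0)$ and $(\Zrm_{i,j}^0)$ defining $\CM[V\times V^*]^W$ in Theorem~\ref{theo:diedral-invariants}. By Remark~\ref{rem:grading-z} these deformed relations are bi-homogeneous, and they differ from the relations at $c=0$ only by correction terms each carrying at least one factor among $A$ and $B$. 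So all that is left to do is to write these correction terms down explicitly when $d=6$; this is precisely what Table~\ref{table:rel-g2} records.

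The plan is to carry this out exactly as in types $A_2$ and $B_2$ (Propositions~\ref{prop:centre-a2} and~\ref{prop:centre-b2}), by a direct computation inside $\Hb$ using the algorithm of~\cite{bonnafe thiel}, implemented in the {\tt MAGMA}~\cite{magma} package {\tt CHAMP}~\cite{thiel}. Concretely: first produce the central elements $\euler=\troncation^{-1}(\euler_0)$ and $\ab_i=\troncation^{-1}(\ab_{i,0})$, $1\le i\le 5$, explicitly in terms of the decomposition of $\Hb$ given by Theorem~\ref{PBW-1}; then form the products $\euler\,\ab_i$ and $\ab_i\ab_j$ in $\Hb$, using the commutation relations~(\ref{relations-1}) and Theorem~\ref{PBW-1} to bring each to normal form; finally apply $\troncation$ and re-express the outcome in the $P$-basis listed above. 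By bi-homogeneity every product that occurs lies in a span of finitely many monomials, so at each step one is solving a finite linear system. Two symmetries roughly halve the work and provide internal consistency checks: the automorphism of $Z$ induced by $\t$ exchanges $A$ and $B$, fixes $q$, $Q$ and $\euler$, and negates $r$, $R$ and every $\ab_i$ (Remarks~\ref{rem:tau-parametres} and~\ref{rem:tau-invariants}); and the symmetry exchanging the roles of $V$ and $V^*$ fixes $A$, $B$ and $\euler$, exchanges $q\leftrightarrow Q$ and $r\leftrightarrow R$, and sends each $\ab_i$ to $\ab_{6-i}$ (it is already visible in the presentation of Proposition~\ref{prop:centre-b2}). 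A last sanity check is that setting $A=B=0$ in the resulting relations returns the relations $(\Zrm_i^0)$ and $(\Zrm_{i,j}^0)$ of Theorem~\ref{theo:diedral-invariants}.

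The main obstacle I anticipate is purely the size of the computation, not a conceptual one: $Z$ is here a free $P$-module of rank $|W|=12$ (against $6$ in type $A_2$ and $8$ in type $B_2$), there are $(d+2)(d-1)/2=20$ relations before any cosmetic simplification, and the truncations $\troncation^{-1}(\ab_{i,0})$ are markedly more involved than in the two smaller cases. What the proof really amounts to is careful, computer-assisted bookkeeping, kept honest by the two symmetries above and by the $c=0$ specialization.
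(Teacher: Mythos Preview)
Your proposal is correct and follows essentially the same approach as the paper, which simply states that the presentation is obtained by computing effectively with {\tt MAGMA} and the algorithms of~\cite{bonnafe thiel}. You have supplied considerably more detail---the role of Theorem~\ref{theo:centre-diedral}, bi-homogeneity, the symmetries under $\t$ and under $V\leftrightarrow V^*$, and the $c=0$ specialization as sanity checks---but the underlying method is identical.
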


\bigskip

\begin{table}\refstepcounter{theo}
{\tiny $$\begin{cases}
\euler~\ab_1~=&q~\ab_2+r~Q-3~(A^2-B^2)~q^2\\
\euler~\ab_2~=&q~\ab_3+Q~\ab_1-3~(A^2-B^2)~q~\euler\\
\euler~\ab_3~=&q~\ab_4+Q~\ab_2-3~(A^2-B^2)~\euler^2+3~(A^2-B^2)~q~Q\\
\euler~\ab_4~=&q~\ab_5+Q~\ab_3-3~(A^2-B^2)~Q~\euler\\
\euler~\ab_5~=&q~R+Q~\ab_4-3~(A^2-B^2)~Q^2\\
\ab_1^2~=&r~\ab_2-q^4~\euler^2+4~q^5~Q+6~(A^2-B^2)~q~r+18~(A^2+B^2)~q^4\\
\ab_1~\ab_2~=&r~\ab_3-q^3~\euler^3+4~q^4~Q~\euler+3~(A^2-B^2)~r~\euler+18~(A^2+B^2)~q^3~\euler+3~(A^2-B^2)~q~\ab_1\\
\ab_1~\ab_3~=&r~\ab_4-q^2~\euler^4+5~q^3~Q~\euler^2-4~q^4~Q^2+18~(A^2+B^2)~q^2~\euler^2~\\
&~+6~(A^2-B^2)~q~\ab_2+3~(A^2-B^2)~Q~r-18~(A^2+B^2)~q^3~Q-18~(A^2-B^2)^2~q^2\\
\ab_1~\ab_4~=&r~\ab_5-q~\euler^5+6~q^2~Q~\euler^3-8~q^3~Q^2~\euler+18~(A^2+B^2)~q~\euler^3~\\
&~+9~(A^2-B^2)~q~\ab_3-36~(A^2+B^2)~q^2~Q~\euler+3~(A^2-B^2)~Q~\ab_1-54~(A^2-B^2)^2~q~\euler\\
\ab_1~\ab_5~=&r~R-\euler^6+7~q~Q~\euler^4-13~q^2~Q^2~\euler^2+4~q^3~Q^3+18~(A^2+B^2)~\euler^4+9~(A^2-B^2)~q~\ab_4\\
&-54~(A^2+B^2)~q~Q~\euler^2+9~(A^2-B^2)~Q~\ab_2+18~(A^2+B^2)~q^2~Q^2 \\
&-81~(A^2-B^2)^2~\euler^2+27~(A^2-B^2)^2~q~Q\\
\ab_2^2~=&r~\ab_4-q^2~\euler^4+4~q^3~Q~\euler^2+18~(A^2+B^2)~q^2~\euler^2+6~(A^2-B^2)~q~\ab_2\\
&+6~(A^2-B^2)~Q~r
-9~(A^2-B^2)^2~q^2\\
\ab_2~\ab_3~=&r~\ab_5-q~\euler^5+5~q^2~Q~\euler^3-4~q^3~Q^2~\euler+18~(A^2+B^2)~q~\euler^3+9~(A^2-B^2)~q~\ab_3~\\
&-18~(A^2+B^2)~q^2~Q~\euler+6~(A^2-B^2)~Q~\ab_1-36~(A^2-B^2)^2~q~\euler\\
\ab_2~\ab_4~=&r~R-\euler^6+6~q~Q~\euler^4-8~q^2~Q^2~\euler^2+18~(A^2+B^2)~\euler^4+12~(A^2-B^2)~q~\ab_4\\
&-36~(A^2+B^2)~q~Q~\euler^2+12~(A^2-B^2)~Q~\ab_2-81~(A^2-B^2)^2~\euler^2+18~(A^2-B^2)^2~q~Q\\
\ab_2~\ab_5~=&R~\ab_1-Q~\euler^5+6~q~Q^2~\euler^3-8~q^2~Q^3~\euler+3~(A^2-B^2)~q~\ab_5+18~(A^2+B^2)~Q~\euler^3\\
&+9~(A^2-B^2)~Q~\ab_3-36~(A^2+B^2)~q~Q^2~\euler-54~(A^2-B^2)^2~Q~\euler\\
\ab_3^2~=&r~R-\euler^6+6~q~Q~\euler^4-9~q^2~Q^2~\euler^2+4~q^3~Q^3+18~(A^2+B^2)~\euler^4-36~(A^2+B^2)~q~Q~\euler^2\\
&+12~(A^2-B^2)~q~\ab_4+12~(A^2-B^2)~Q~\ab_2+18~(A^2+B^2)~q^2~Q^2-72~(A^2-B^2)^2~\euler^2+36~(A^2-B^2)^2~q~Q\\
\ab_3~\ab_4~=&R~\ab_1-Q~\euler^5+5~q~Q^2~\euler^3-4~q^2~Q^3~\euler+6~(A^2-B^2)~q~\ab_5+18~(A^2+B^2)~Q~\euler^3\\
&+9~(A^2-B^2)~Q~\ab_3-18~(A^2+B^2)~q~Q^2~\euler-36~(A^2-B^2)^2~Q~\euler\\
\ab_3~\ab_5~=&R~\ab_2-Q^2~\euler^4+5~q~Q^3~\euler^2-4~q^2~Q^4+18~(A^2+B^2)~Q^2~\euler^2+6~(A^2-B^2)~Q~\ab_4\\
&+3~(A^2-B^2)~q~R-18~(A^2+B^2)~q~Q^3-18~(A^2-B^2)^2~Q^2\\
\ab_4^2~=&R~\ab_2-Q^2~\euler^4+4~Q^3~q~\euler^2+18~(A^2+B^2)~Q^2~\euler^2+6~(A^2-B^2)~Q~\ab_4\\
&+6~(A^2-B^2)~q~R
-9~(A^2-B^2)^2~Q^2\\
\ab_4~\ab_5~=&R~\ab_3-Q^3~\euler^3+4~q~Q^4~\euler+3~(A^2-B^2)~R~\euler+18~(A^2+B^2)~Q^3~\euler+3~(A^2-B^2)~Q~\ab_5\\
\ab_5^2~=&R~\ab_4-Q^4~\euler^2+4~q~Q^5+6~(A^2-B^2)~Q~R+18~(A^2+B^2)~Q^4
\end{cases}$$}

\caption{Presentation of $Z$ whenever $d=6$}\label{table:rel-g2}
\end{table}

Now, let
{\small\eqna
f_6(\tb)&=&\tb^6 -6\bigl(2qQ+3(A^2+B^2)\bigr)~\tb^5 + 9\bigl(6q^2Q^2+16(A^2+B^2)qQ+9(A^2-B^2)^2\bigr)~\tb^4
\\ && - \bigl(rR + 112q^3Q^3 +396(A^2+B^2)q^2Q^2 + 324(A^2-B^2)^2qQ\bigr)~\tb^3 \\ 
&& + 3\bigl(2rqRQ +35q^4Q^4- 6(A^2-B^2)(rQ^3+q^3R)  + 144(A^2+B^2)q^3Q^3 + 
162(A^2-B^2)^2q^2Q^2\bigr)~\tb^2 \\
&& -9\bigl(rq^2RQ^2+4q^5Q^5-4(A^2-B^2)(rqQ^4+q^4RQ) +18(A^2+B^2)q^4Q^4 +36(A^2-B^2)^2q^3Q^3\bigr)~\tb \\ 
&& + \bigl(rQ^3 + q^3R-9(A^2-B^2)q^2Q^2\bigr)^2
\endeqna}
Then 
\equat\label{eq:miniaml-g2}
\text{\it the minimal polynomial of $\euler$ is $f_6(\tb^2)$.}
\endequat

\subsubsection{Galois group}
Since $f_6(0)$ is a square in $P$, it follows from~\cite[(B.6.1)]{calogero} that 
the discriminant of $f_6(\tb^2)$ is a square in $P$. Therefore, the Galois group $G$ 
of the polynomial $f_6(\tb)$ is contained in $\AG_W$. Moreover, it follows from Remark~\ref{rem:bet} 
that $G$ is contained in $\SG_W^B$. Therefore, $G \subset \SG_W^D$. 
In fact,
\equat\label{eq:galois-g2}
G=\SG_W^D.
\endequat
\begin{proof}
Let $G_1$ denote the stabilizer of $1 \in W$ in $G \subset \SG_W$. 
By the computation of Calogero-Moser families, $G_1$ contains a subgroup admitting an orbit 
of cardinality $10$ (case $a=b \neq 0$). Consequently:
\begin{itemize}
\itemth{1} $G_1$ contains a subgroup admitting an orbit 
of cardinality $10$.

\itemth{2} $G$ contains $\iota(W \times W)$. 
\end{itemize}
An easy computation with the software {\tt GAP4}~\cite{gap} shows that 
the only subgroup $G$ of $\SG_W^D$ satisfying~(1) and~(2) 
is $\SG_W^D$.
\end{proof}

\bigskip

\subsubsection{Fixed points}
The next result follows immediately from the presentation given in Proposition~\ref{prop:centre-g2}:

\bigskip

\begin{prop}\label{prop:fixed-g2}
The $\CM[A,B]$-algebra $\CM[\ZCB^{\mub_6}]$ admits the following presentation:
$$\begin{cases}
\text{Generators:} & r,R,\euler \\
\text{Relation:} &
(\euler-3(A+B))(\euler+3(A+B))(\euler-3(A-B))(\euler+3(A-B))\euler^2=rR\\
\end{cases}$$
This proves~\cite[Conjecture~FIX]{calogero} in this case.
\end{prop}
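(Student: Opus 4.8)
The plan is to follow the strategy of the proof of Theorem~\ref{theo:fixed-d}, reducing the presentation of $Z$ given in Proposition~\ref{prop:centre-g2} (Table~\ref{table:rel-g2}) modulo the ideal $\IG$ of $Z$ generated by the elements $\lexp{\z}{z}-z$, $z\in Z$ (so that $\CM[\ZCB^{\mub_6}]=Z/\sqrt{\IG}$, since $\mub_6=\langle\z\rangle$). First I would observe that, $\z$ being a primitive $6$-th root of unity, $\IG$ is generated by the $\ZM$-homogeneous elements of $Z$ whose degree is not divisible by $6$; among the generators of Proposition~\ref{prop:centre-g2} these are $q$, $Q$, $\ab_1$, $\ab_2$, $\ab_4$, $\ab_5$ (of $\ZM$-degrees $-2$, $2$, $-4$, $-2$, $2$, $4$ respectively), whereas $r$, $R$, $\euler$ and $\ab_3$ have degree $0$. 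Exactly as in Theorem~\ref{theo:fixed-d}, writing a homogeneous element of nonzero degree in the $P$-basis of Theorem~\ref{theo:centre-diedral} shows $\IG=\langle q,Q,\ab_1,\ab_2,\ab_4,\ab_5\rangle$. Hence $Z/\IG$ is generated over $\CM[A,B]$ by the images of $r$, $R$, $\euler$, $\ab_3$, and its relations are obtained from Table~\ref{table:rel-g2} by setting $q=Q=\ab_1=\ab_2=\ab_4=\ab_5=0$. By bi-homogeneity only the relations of bi-degree divisible by $6$ survive nontrivially, namely $(\Zrm_3)$, $(\Zrm_{1,2})$, $(\Zrm_{1,5})$, $(\Zrm_{2,4})$, $(\Zrm_{3,3})$ and $(\Zrm_{4,5})$.

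Next I would carry out these substitutions. Put $u=\ab_3+3(A^2-B^2)\euler$. A direct computation in Table~\ref{table:rel-g2} gives, in $Z/\IG$,
$$\euler\,u=0,\qquad r\,u=0,\qquad R\,u=0$$
from $(\Zrm_3)$, $(\Zrm_{1,2})$, $(\Zrm_{4,5})$ respectively, and
$$rR=\euler^6-18(A^2+B^2)\euler^4+81(A^2-B^2)^2\euler^2$$
from $(\Zrm_{1,5})$ (with $(\Zrm_{2,4})$ yielding the same identity); finally $(\Zrm_{3,3})$ becomes $\ab_3^2=9(A^2-B^2)^2\euler^2$, i.e. $u^2=6(A^2-B^2)\euler\,u$. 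Combining the last relation with $\euler\,u=0$ shows $u^2=0$, so $u$ lies in the nilradical of $Z/\IG$. After the change of variable $\ab_3\mapsto u-3(A^2-B^2)\euler$ we thus obtain, with $g(\euler)=\euler^6-18(A^2+B^2)\euler^4+81(A^2-B^2)^2\euler^2$,
$$Z/\IG\;\cong\;\CM[A,B][r,R,\euler,u]\big/\bigl(\euler u,\,ru,\,Ru,\,u^2,\,rR-g(\euler)\bigr).$$

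To finish, quotienting $Z/\IG$ by the ideal generated by $u$ kills every relation involving $u$ and leaves $\CM[A,B][r,R,\euler]/(rR-g(\euler))$. Since $rR-g(\euler)$ is irreducible in $\CM[A,B,r,R,\euler]$ (it is of degree $1$ in $r$ and has unit content in $\CM[A,B,\euler]$, because $R\nmid g(\euler)$), this quotient is a domain, hence reduced; therefore it equals $Z/\sqrt{\IG}=\CM[\ZCB^{\mub_6}]$. The routine factorization $g(\euler)=(\euler-3(A+B))(\euler+3(A+B))(\euler-3(A-B))(\euler+3(A-B))\,\euler^2$ then gives the presentation in the statement. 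Finally, as in Remark~\ref{rem:fixed}, this hypersurface is the generic Calogero-Moser space of the cyclic group $\mub_6$ for suitable parameters, which yields \cite[Conjecture~FIX]{calogero} in this case.

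The one genuinely non-formal step — and the main point — is the identity $u^2=0$, equivalently the irreducibility of $\ZCB^{\mub_6}$. This is in sharp contrast with the case $d=4$ (Proposition~\ref{prop:fixed-b2}), where the analogous element $\ab_2+2(A^2-B^2)$ satisfies only $u^2=4(A^2-B^2)u$ and hence produces two components when $a^2\neq b^2$. The difference is that in type $G_2$ the relation $(\Zrm_{3,3})$ carries $\euler^2$ rather than $\ab_3$ on its right-hand side, which is itself forced by bi-homogeneity (there is no bi-homogeneous monomial $A^2\ab_3$ of bi-degree $(6,6)$), so the cleanest way to establish it is simply to read it off Table~\ref{table:rel-g2}.
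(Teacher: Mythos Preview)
Your proof is correct and follows exactly the approach the paper indicates (the paper simply asserts the result ``follows immediately from the presentation given in Proposition~\ref{prop:centre-g2}''); you have supplied the details of that reduction, in particular the key observation that $u=\ab_3+3(A^2-B^2)\euler$ is nilpotent in $Z/\IG$, so that $\ab_3$ disappears from the reduced quotient.

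One small imprecision in your closing commentary: bi-homogeneity alone does \emph{not} exclude an $\ab_3$-term on the right-hand side of $(\Zrm_{3,3})$ --- for instance $(A^2-B^2)\,\euler\,\ab_3$ has bi-degree $(6,6)$ --- so the absence of such a term is not ``forced'' but is simply what one reads from Table~\ref{table:rel-g2}. Since your actual argument does read the relation off the table rather than relying on this heuristic, the proof itself is unaffected.
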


\bigskip

\subsubsection{Lie algebra at cuspidal point}
Recall from Proposition~\ref{prop:cuspidal} that there is a unique cuspidal Calogero-Moser $c$-family $\FC$: 
it is the one which contains $\chi_1$ (this fact does not depend on the parameter $c$; 
however, the cardinality of $\FC$ depends on the parameter). It corresponds to the maximal 
ideal $\mG=\langle q,r,Q,R,\euler,\ab_1,\ab_2,\ab_3,\ab_4,\ab_5\rangle$ of $Z_c$. 
It follows from the presentation of $Z$ given by Proposition~\ref{prop:centre-g2} 
that the cotangent space $\Lie_c(\FC)=\mG_\FC^c/(\mG_\FC^c)^2$ has dimension $10$ (a basis is given by 
the images $\qim$, $\rim$, $\Qim$, $\Rim$, $\eulerim$, $\abim_1$, $\abim_2$, $\abim_3$, 
$\abim_4$ and $\abim_5$ of $q$, $r$, $Q$, $R$, $\euler$, $\ab_1$, $\ab_2$, $\ab_3$, $\ab_4$, 
$\ab_5$ in $\mG$ respectively). The Poisson bracket (and so the Lie bracket in $\Lie(\FC)$) 
can then be computed explicitly using the {\tt MAGMA} package {\tt CHAMP}. 
We can then deduce the following result:

\bigskip

\begin{prop}\label{prop:lie-g2}
Let $\aleph_c :  \Lie_c(\FC)  \longto  \sG\pG_4(\CM)$ be the linear map defined by 
Table~\ref{table:sp4}. It is a morphism of Lie algebras. Moreover:
\begin{itemize}
\itemth{a} If $a^2 \neq b^2$, then $\aleph_c$ is an isomorphism of Lie algebras.

\itemth{b} If $a^2=b^2$, then its image is isomorphic to $\sG\lG_2(\CM)$ (with basis 
$\aleph_c(\qim)$, $\aleph_c(\Qim)$ et $\aleph_c(\eulerim)$) and its kernel is commutative, 
of dimension $7$ (as a module for $\sG\lG_2(\CM)$, it is irreducible).
\end{itemize}
\end{prop}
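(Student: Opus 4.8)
The proof is computational, and the plan is to make the Lie bracket on $\Lie_c(\FC)=\mG_\FC^c/(\mG_\FC^c)^2$ fully explicit and then match it against Table~\ref{table:sp4}. First I would record that, by Proposition~\ref{prop:centre-g2}, $\mG_\FC^c=\langle q,r,Q,R,\euler,\ab_1,\dots,\ab_5\rangle_{Z_c}$, and the images $\qim,\rim,\Qim,\Rim,\eulerim,\abim_1,\dots,\abim_5$ of these ten generators form a basis of the cotangent space; hence the Lie bracket is determined by the $45$ Poisson brackets of pairs of these generators, read modulo $(\mG_\FC^c)^2$. These I would compute with the {\tt MAGMA} package {\tt CHAMP} (\cite{thiel,bonnafe thiel}), exactly as the presentation in Table~\ref{table:rel-g2} was produced. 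Two built-in checks are available: $\{q,Q\}=\euler$ by~(\ref{eq:poisson-euler}), and, since $\euler$ is the Euler element, $\{\euler,z\}=(\deg z)\,z$ for every $\ZM$-homogeneous $z$ (the $\ZM$-grading being that of Remark~\ref{rem:grading-z}); in particular $\mathrm{ad}(\eulerim)$ is diagonal in the chosen basis, its eigenvalue on each basis vector being its $\ZM$-degree ($-2,2,0$ on $\qim,\Qim,\eulerim$; $-6,6$ on $\rim,\Rim$; $2i-6$ on $\abim_i$).

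That $\aleph_c$ is a morphism of Lie algebras is then a finite verification: the ten matrices $\aleph_c(\qim),\dots,\aleph_c(\abim_5)\in\sG\pG_4(\CM)$ are given by Table~\ref{table:sp4}, and for each of the $45$ pairs $(\xib,\xib')$ one expands $[\xib,\xib']$ in the basis using the structure constants just obtained and checks that $\aleph_c([\xib,\xib'])$ equals the matrix bracket $[\aleph_c(\xib),\aleph_c(\xib')]$ in $\sG\pG_4(\CM)$ -- a linear computation over $\CM[A,B]$ carried out on the computer. Since $\dim_\CM\Lie_c(\FC)=10=\dim_\CM\sG\pG_4(\CM)$, once $\aleph_c$ is a Lie morphism it is an isomorphism precisely when it is injective.

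For (a), I would compute the determinant of the $10\times10$ matrix expressing $\aleph_c(\qim),\dots,\aleph_c(\abim_5)$ in a fixed $\CM$-basis of $\sG\pG_4(\CM)$; from Table~\ref{table:sp4} it is a nonzero constant times a power of $(a^2-b^2)$, so $\aleph_c$ is injective, hence an isomorphism, whenever $a^2\neq b^2$. For (b), inspection of Table~\ref{table:sp4} shows that at $a^2=b^2$ the seven matrices attached to $\rim,\Rim,\abim_1,\dots,\abim_5$ vanish, so $K:=\ker\aleph_c$ is $7$-dimensional with that basis while the image is spanned by $\aleph_c(\qim),\aleph_c(\Qim),\aleph_c(\eulerim)$; these span a copy of $\sG\lG_2(\CM)$, since $[\aleph_c(\qim),\aleph_c(\Qim)]=\aleph_c(\eulerim)$ (image of $\{q,Q\}=\euler$) and $\mathrm{ad}(\aleph_c(\eulerim))$ acts on $\aleph_c(\qim),\aleph_c(\Qim)$ with eigenvalues $-2,2$. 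As $K$ is an ideal of $\Lie_c(\FC)$, evaluating the structure constants at $a^2=b^2$ and checking that all brackets among $\rim,\Rim,\abim_1,\dots,\abim_5$ vanish shows that $K$ is commutative and carries a $\Lie_c(\FC)/K\cong\sG\lG_2(\CM)$-module structure via the adjoint action; the Cartan element $\aleph_c(\eulerim)$ acts on $K$ diagonally with eigenvalues $-6,-4,-2,0,2,4,6$ (the $\ZM$-degrees of the basis vectors). A $7$-dimensional $\sG\lG_2(\CM)$-module whose weights form an unbroken string of length $7$ with all multiplicities equal to $1$ can only be the irreducible module $V(6)$, so $K$ is irreducible, which completes the proof.

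The difficulty is one of bookkeeping rather than of ideas: one must extract the Poisson bracket modulo $(\mG_\FC^c)^2$ reliably and push through the $45$-fold compatibility check, which is cumbersome because of the $A^2\pm B^2$ coefficients in Table~\ref{table:rel-g2}. The only non-mechanical point is the degenerate case (b), namely the identification of the isomorphism type of the $7$-dimensional kernel; this is reduced to reading off the eigenvalues of $\euler$, which the Euler-element relation makes immediate.
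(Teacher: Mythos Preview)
Your proposal is correct and follows essentially the same approach as the paper: the paper states only that ``the Poisson bracket (and so the Lie bracket in $\Lie_c(\FC)$) can then be computed explicitly using the {\tt MAGMA} package {\tt CHAMP}'' and then records the result, so the proof is entirely computational, exactly as you outline. Your write-up is in fact more detailed than the paper's, notably in part~(b), where you supplement the machine verification with a clean structural argument: using that $\{\euler,z\}=(\deg z)\,z$ forces $\mathrm{ad}(\eulerim)$ to act on the kernel with the seven distinct eigenvalues $-6,-4,-2,0,2,4,6$, and then invoking Weyl's complete reducibility to conclude that the $\sG\lG_2(\CM)$-module must be the irreducible $V(6)$. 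This is a genuine (if modest) improvement over leaving the irreducibility to the computer.
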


\bigskip

\begin{table}\refstepcounter{theo}
{\small $$\begin{array}{ccc}
\rim & \longmapsto & 
-324(a^2-b^2)
\begin{pmatrix}
0 & 0 & 0 & 0 \\
0 & 0 & 0 & 0 \\
0 & 0 & 0 & 0 \\
1 & 0 & 0 & 0 
\end{pmatrix}\\
&&\\
\qim & \longmapsto & 
\begin{pmatrix}
 0 &  0 &  0 &   0 \\
-3 &  0 &  0 &   0 \\
 0 & -4 &  0 &   0 \\
 0 &  0 &  3 &   0 
 \end{pmatrix}\\
&&\\
\Rim & \longmapsto & 
9(a^2-b^2)
\begin{pmatrix}
0 & 0 & 0 & 1 \\
0 & 0 & 0 & 0 \\
0 & 0 & 0 & 0 \\
0 & 0 & 0 & 0 
\end{pmatrix}\\
&&\\
\Qim & \longmapsto & 
\begin{pmatrix}
0 & 1 & 0 & 0 \\
0 & 0 & 1 & 0 \\
0 & 0 & 0 & -1 \\
0 & 0 & 0 & 0 
\end{pmatrix}\\
&&\\
\eulerim & \longmapsto & 
\begin{pmatrix}
3 & 0 &  0 &  0 \\
0 & 1 &  0 &  0 \\
0 & 0 & -1 &  0 \\
0 & 0 &  0 & -3 
\end{pmatrix}\\
\end{array}\qquad
\begin{array}{ccc}
\ab_1 & \longmapsto & 
-54(a^2-b^2)
\begin{pmatrix}
0 & 0 & 0 & 0 \\
0 & 0 & 0 & 0 \\
1 & 0 & 0 & 0 \\
0 & 1 & 0 & 0 
\end{pmatrix}\\
&&\\
\ab_2 & \longmapsto & 
6(a^2-b^2)
\begin{pmatrix}
0 &  0 &  0 & 0 \\
3 &  0 &  0 & 0 \\
0 & -2 &  0 & 0 \\
0 &  0 & -3 & 0 
\end{pmatrix}\\
&&\\
\ab_3 & \longmapsto & 
3(a^2-b^2)
\begin{pmatrix}  
-3 & 0 &  0 & 0 \\
 0 & 2 &  0 & 0 \\
 0 & 0 & -2 & 0 \\
 0 & 0 &  0 & 3 
 \end{pmatrix}\\
&&\\
\ab_4 & \longmapsto & 
3(a^2-b^2)
\begin{pmatrix}
0 & -2 & 0 & 0 \\
0 &  0 & 1 & 0 \\
0 &  0 & 0 & 2 \\
0 &  0 & 0 & 0 
\end{pmatrix}\\
&&\\
\ab_5 & \longmapsto & 
(-9/2)(a^2-b^2)
\begin{pmatrix}
0 & 0 & 1 & 0 \\
0 & 0 & 0 & 1 \\
0 & 0 & 0 & 0 \\
0 & 0 & 0 & 0 
\end{pmatrix}\\
\end{array}$$}

\bigskip

\caption{Definition of $\aleph_c$ for $d=6$}\label{table:sp4}
\end{table}

\bigskip

\end{document}